\newtheorem{theorem}{Theorem}[section]
\newtheorem{lemma}[theorem]{Lemma}
\newtheorem{proposition}[theorem]{Proposition}
\newtheorem{corollary}[theorem]{Corollary}
\theoremstyle{definition}
\newtheorem{definition}[theorem]{Definition}
\newtheorem{example}[theorem]{Example}
\theoremstyle{remark}
\newtheorem{remark}[theorem]{Remark}
\numberwithin{equation}{section}
\newcommand{\frakc}{\mathfrak{c}}
\newcommand{\eps}{\varepsilon}
\newcommand{\N}{\mathbb{N}}
\newcommand{\R}{\mathbb{R}}
\newcommand{\Q}{\mathbb{Q}}
\newcommand{\aA}{\mathcal{A}}
\newcommand{\dD}{\mathcal{D}}
\newcommand{\fF}{\mathcal{F}}
\newcommand{\kK}{\mathcal{K}}
\newcommand{\zZ}{\mathcal{Z}}
\DeclareMathOperator{\der}{\!\mathrm{d}\!}
 \DeclareMathOperator{\intt}{int}
\newcommand{\ol}{\overline}
\newcommand{\rstr}{\restriction}
\DeclareMathOperator{\sgn}{sgn}
\newcommand{\sm}{\setminus}
\newcommand{\sub}{\subseteq}
\DeclareMathOperator{\supp}{supp}
\newcommand{\seq}[2]{\big\langle#1\colon\ #2\big\rangle}
\newcommand{\seqn}[1]{\big\langle#1\colon\ n\io\big\rangle}
\newcommand{\seqk}[1]{\big\langle#1\colon\ k\io\big\rangle}
\newcommand{\seqi}[1]{\big\langle#1\colon\ i\io\big\rangle}
\newcommand{\ctblsub}[1]{\left[#1\right]^\omega}
\newcommand{\iA}{\in\aA}
\newcommand{\io}{\in\omega}
\newcommand{\wo}{{\wp(\omega)}}
\newcommand{\bo}{{\beta\omega}}
\newcommand{\cso}{\ctblsub{\omega}}
\newcommand{\noproof}{\hfill$\Box$}
\begin{document}

\title[On sequences of finitely supported measures related to the J.--N. theorem]{On sequences of finitely supported measures related to the Josefson--Nissenzweig theorem}
\author[W. Marciszewski]{Witold Marciszewski}
\address{Institute of Mathematics and Computer Science, University of Warsaw, Warsaw, Poland.}
\email{wmarcisz@mimuw.edu.pl}
\author[D.\ Sobota]{Damian Sobota}
\address{Kurt G\"odel Research Center, Institut f\"ur Mathematik, Universit\"at Wien, Wien, Austria.}
\email{ein.damian.sobota@gmail.com}
\urladdr{www.logic.univie.ac.at/~{}dsobota}
\author[L. Zdomskyy]{Lyubomyr Zdomskyy}
\address{Institut für Diskrete Mathematik und Geometrie, Technische Universität Wien, Wien, Austria.}
\email{lzdomsky@gmail.com}
\urladdr{dmg.tuwien.ac.at/zdomskyy}
\thanks{The research of the first  named author is supported by the NCN (National Science Centre, Poland) research grant no. 2020/37/B/ST1/02613.. The second and third named authors have been supported by the Austrian Science Fund FWF, Grants I 2374-N35, I 3709-N35, M 2500-N35, I 4570-N35.}

\begin{abstract}
Given a Tychonoff space $X$, we call a sequence $\langle\mu_n\colon n\in\omega\rangle$ of signed Borel measures on $X$ \textit{a finitely supported Josefson--Nissenzweig sequence} (in short \textit{a JN-sequence}) if: 1) for every $n\in\omega$ the measure $\mu_n$ is a finite combination of one-point measures and $\|\mu_n\|=1$, and 2) $\int_Xf\der\mu_n\to0$ for every continuous function $f\in C(X)$. Our main result asserts that if a Tychonoff space $X$ admits a JN-sequence, then there exists a JN-sequence $\langle\mu_n\colon n\in\omega\rangle$ such that: i) $\supp(\mu_n)\cap\supp(\mu_k)=\emptyset$ for every $n\neq k\in\omega$, and ii) the union $\bigcup_{n\in\omega}\supp(\mu_n)$ is a discrete subset of $X$. We also prove that 
if a Tychonoff space $X$ carries a JN-sequence, then either there is a JN-sequence $\langle\mu_n\colon n\io\rangle$ on $X$ such that $|\supp(\mu_n)|=2$ for every $n\in\omega$, or for every JN-sequence $\langle\mu_n\colon n\io\rangle$ on $X$ we have $\lim_{n\to\infty}|\supp(\mu_n)|=\infty$.

\end{abstract}

\keywords{Josefson--Nissenzweig theorem, convergence of measures, $C_p(X)$-spaces, space $c_0$}

\maketitle

\section{Introduction}

The classical Josefson--Nissenzweig theorem states that every infinite-dimensional Banach space $X$ admits a sequence $\seqn{x_n^*}$ of continuous functionals such that $\big\|x_n^*\big\|=1$ for every $n\io$ and $x_n^*(x)\to0$ for every $x\in X$ (see \cite{Jos75} and \cite{Nis75}; cf. also \cite{HJ77}, \cite{BD84}, \cite{Beh95}). The theorem has found numerous applications in Banach space theory, see e.g. \cite{Khu78}, \cite{Cem84}, \cite{Fre84}, \cite{Bat92}, \cite{BF93}. Its validity was also studied in more general settings, e.g. for Fr\'echet spaces---see \cite{Bon91}, \cite{BLV}, \cite{LS93}.

In the class of $C_p(X)$-spaces the theorem was first studied by Banakh, K\k{a}kol, and \'{S}liwa in \cite{BKS19}, primarily in the context of the Separable Quotient Problem for $C_p(X)$-spaces. It was proved there that, given a Tychonoff space $X$, the space $C_p(X)$ contains a complemented copy of the space $(c_0)_p=\{x\in\R^\omega\colon\ x(n)\to0\}$, endowed with the pointwise topology inherited from $\R^\omega$, if and only if there exists a sequence $\seqn{\mu_n}$ of finitely supported signed Borel measures on $X$ such that $\big\|\mu_n\big\|=1$ for every $n\io$ and $\int_Xf\der\mu_n\to0$ for every continuous function $f\in C(X)$. Here, by a \textit{finitely supported} measure we mean a measure which is a finite linear combination of one-point measures, see Section \ref{section:notation} for details. Since there is a natural one-to-one linear correspondence between continuous functionals on the topological vector space $C_p(X)$ and finitely supported signed measures on $X$, the latter result can be considered as a characterization of those $C_p(X)$-spaces for which the Josefson--Nissenzweig theorem holds. To simplify the further discussion, let us introduce the following definition.

\begin{definition}\label{def:fsjnseq}
Let $X$ be a Tychonoff space.
	 A sequence $\seqn{\mu_n}$ of finitely supported signed Borel measures on $X$ is \textit{a (finitely supported) Josefson--Nissenzweig sequence} (in short, \textit{a JN-sequence}) if $\|\mu_n\|=1$ for every $n\io$ and $\int_Xf\der\mu_n\to0$ for every continuous function $f\in C(X)$.\footnote{Let us note that in the paper \cite{KSZgroth} we used the more accurate abbreviation \textit{fsJN-sequence}. The reason behind that was that in the latter paper we also studied sequences of measures with infinite supports. Since in the current paper we only focus on sequences of finitely supported measures, we decided to stick to the simpler abbreviation \textit{JN-sequence}; this approach also agrees with papers \cite{KMSZ} and \cite{MSnf}.}
\end{definition}
Thus, the main result of \cite{BKS19} asserts that a Tychonoff space admits a JN-sequence if and only if $C_p(X)$ contains a complemented copy of the space $(c_0)_p$. To provide examples, it was observed in \cite{BKS19} that, e.g., every Tychonoff space containing a non-trivial convergent sequence carries a JN-sequence but the \v{C}ech--Stone compactification $\bo$ of the set $\omega$ of natural numbers does not. For further examples and counterexamples, see \cite{KSZgroth}, \cite{KSZprod}, \cite{KMSZ}, \cite{BKScpxe}, \cite{MSnf}, where various criteria for spaces to admit JN-sequences were given.

In this paper we are interested in finitely supported Josefson--Nissenzweig sequences on Tychonoff spaces \textit{per se}, that is, we are curious to what extent we can manipulate them, change them, and, ultimately and most importantly, simplify them. Such investigations may find applications (and in fact have already found), e.g., in the studies of Grothendieck $C(K)$-spaces (see \cite{KSZgroth} and \cite{MSnf}) or in the context of the aforementioned Separable Quotient Problem for $C_p$-spaces (\cite{BKScpxe}).

Our main result reads as follows:

\begin{theorem}\label{theorem:main}
If a Tychonoff space $X$ carries a JN-sequence $\seqn{\mu_n}$, then there exists a JN-sequence $\seqn{\nu_n}$ on $X$ such that:
\begin{enumerate}\setcounter{enumi}{-1}
	\item $\bigcup_{n\io}\supp\big(\nu_n\big)\sub\bigcup_{n\io}\supp\big(\mu_n\big)$,
	\item $\supp\big(\nu_n\big)\cap\supp\big(\nu_k\big)=\emptyset$ for every $n\neq k\io$,
	\item $\bigcup_{n\io}\supp\big(\nu_n\big)$ is a discrete subset of $X$.
\end{enumerate}
\end{theorem}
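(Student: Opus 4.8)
The plan is to work inside the countable set $S=\bigcup_{n\io}\supp(\mu_n)$, to regard each $\mu_n$ as a norm-one vector in $\ell_1(S)$, and to manufacture the $\nu_n$ as suitable finite combinations (and restrictions) of the $\mu_n$. The engine of the whole argument is an elementary \emph{preservation principle}: if $\nu=\sum_n c_n\mu_n$ is a finite combination using only indices $\ge N$ with $\sum_n|c_n|\le M$, then $\big|\int_Xf\der\nu\big|\le M\cdot\sup_{n\ge N}\big|\int_Xf\der\mu_n\big|$ for every $f\in C(X)$, so that convergence of the integrals to $0$ for \emph{all} $f\in C(X)$ (including unbounded $f$) is automatically inherited by any such tail combination. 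First I would note that $S$ is infinite: since finite subsets of a Tychonoff space are $C$-embedded, for each $x\in S$ there is $f_x\in C(X)$ with $f_x\rstr S=\mathbf 1_{\{x\}}$, whence $\mu_n(\{x\})=\int_Xf_x\der\mu_n\to0$; were $S$ finite, summing its finitely many coordinates would give $\|\mu_n\|\to0$, a contradiction. By a diagonal argument I then pass to a subsequence along which $\mu_n(\{x\})\to b_x$ for every $x\in S$, and Fatou yields $b=(b_x)\in\ell_1(S)$ with $\|b\|\le1$.

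The second step kills the persistent mass recorded by $b$ by passing to the differences $\sigma_n=\mu_{2n}-\mu_{2n+1}$. Then $\sigma_n(\{x\})\to0$ for every $x\in S$, and by the preservation principle $\int_Xf\der\sigma_n\to0$ for every $f\in C(X)$. The only danger is a collapse of the norm, so I split on $\liminf_n\|\sigma_n\|$. If this is positive I assume, after thinning, that $\|\sigma_n\|\ge\delta>0$. If instead $\|\sigma_n\|\to0$ for this and all larger gaps, then $\seqn{\mu_n}$ is $\ell_1$-Cauchy, so $\mu_n\to b$ in norm with $\|b\|=1$ and $\int_Xf\der b=0$ for all $f\in C(X)$; writing $b=b^+-b^-$ gives two countably supported nonnegative measures with $b^\pm(X)=1/2$ that are not separated by $C(X)$, and complete regularity then forces a nontrivial convergent sequence inside $S$, from which a JN-sequence with the desired properties is read off directly (as in the convergent-sequence example of the introduction). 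From now on I assume coordinatewise convergence to $0$ together with $\|\sigma_n\|\ge\delta$.

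The third step produces pairwise disjoint supports by a gliding-hump construction. With $U_k=\bigcup_{j<k}\supp(\nu_j)$ finite, I choose $n_k$ so large that $\|\sigma_{n_k}\rstr U_k\|$ is as small as desired and set $\nu_k=\sigma_{n_k}\rstr(S\sm U_k)$, normalised to norm one; then $\supp(\nu_k)\sub S\sm U_k$ is disjoint from every earlier support, which gives properties (0) and (1), and the norms stay $\ge\delta/2$. For the integral condition I write $\int_Xf\der\nu_k$ as a scalar multiple of $\int_Xf\der\sigma_{n_k}-\int_Xf\der(\sigma_{n_k}\rstr U_k)$; the first term tends to $0$ for every $f$, and for \emph{bounded} $f$ the second is at most $\|f\|_\infty\,\|\sigma_{n_k}\rstr U_k\|\to0$. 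The main obstacle is exactly this second term for \emph{unbounded} $f\in C(X)$, where the restriction destroys the clean tail-combination form that the preservation principle needs. I expect to overcome it by arranging, in the same induction, that the supports cluster: after a further subsequence one can keep each $\supp(\nu_k)$ inside a prescribed neighbourhood of a single accumulation point $p$ of $\bigcup_n\supp(\mu_n)$. Since every $f\in C(X)$ is bounded on some neighbourhood of $p$, while $\int_X1\,\der\nu_k=\nu_k(X)\to0$, the values $f\rstr\supp(\nu_k)$ are eventually uniformly bounded, and the bounded-$f$ estimate upgrades to all of $C(X)$, making $\seqn{\nu_k}$ a genuine JN-sequence.

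Finally, for the discreteness requirement (2) I interleave a separation step into the induction. At stage $k$ the set $U_k\cup\supp(\nu_k)$ is finite, so complete regularity lets me fix, for each of its points, an open neighbourhood meeting $\bigcup_{j\le k}\supp(\nu_j)$ only in that point, and reserve it for the rest of the construction; steering the later humps into the complements of the reserved neighbourhoods, a routine diagonalisation over the countable set $S$ ensures that every point of $D=\bigcup_k\supp(\nu_k)$ has a neighbourhood containing no other point of $D$, i.e. $D$ is discrete. The hardest part of the argument is the tension in the third step between making the supports \emph{exactly} disjoint (which forces a restriction) and preserving the integral condition for unbounded continuous functions (which wants untouched tail combinations); reconciling the two through the clustering of the supports is, I expect, where the real work lies.
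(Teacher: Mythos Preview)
Your plan has the right overall shape, but you have misidentified where the hard step is, and the step you call ``routine'' is precisely where the real content lies.

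First, two repairable points. In Step~2, the Cauchy case does not produce a convergent sequence; it produces a contradiction directly. If $\mu_n\to b$ in $\ell_1(S)$ with $\|b\|=1$ and $\int f\,\der b=0$ for every $f\in C(X)$, choose a finite $A\subseteq S$ with $\|b\rstr A\|>1/2$ and $f\in C(X,[-1,1])$ with $f(x)=\sgn(b_x)$ for $x\in A$; then $|b(f)|\ge\|b\rstr A\|-\|b\rstr(S\setminus A)\|>0$. More importantly, your worry in Step~3 about unbounded $f$ is a non-issue: every $f\in C(X)$ is automatically bounded on $S=\bigcup_n\supp(\mu_n)$ (this is Lemma~\ref{lemma:fsjn_f_bounded}; the paper cites \cite{BKS19} and \cite{KMSZ}). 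Once you know this, the second term $|\int f\,\der(\sigma_{n_k}\rstr U_k)|\le\|f\rstr S\|_\infty\cdot\|\sigma_{n_k}\rstr U_k\|\to0$ holds for \emph{every} $f\in C(X)$, and your gliding hump already yields a disjointly supported JN-sequence. The clustering-near-$p$ device is neither needed nor obviously achievable.

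The genuine gap is Step~4. ``Steering later humps into the complements of the reserved neighbourhoods'' means you need, for infinitely many $n$, that $\sigma_n$ has large mass \emph{outside} the open set $W_k=\bigcup_{x\in U_k}V_x$. Coordinatewise convergence of $\sigma_n$ to $0$ only controls mass on finite subsets of $S$, whereas $W_k\cap S$ may be infinite; no diagonalisation over $S$ bridges this. This is exactly the obstacle the paper's Section~\ref{section:discrete} is devoted to. The mechanism there is: starting from a \emph{disjointly supported} JN-sequence (your Step~3 output), one first uses pigeonhole (Lemma~\ref{lem:subseq}, Corollary~\ref{cor:subseq}) to choose, around each selected support, an open set on which all later measures in the subsequence have summably small total variation; one then builds an increasing sequence of zero-sets $C_i=g_i^{-1}(0)$ of continuous functions $g_i$ (Lemma~\ref{lem:main}), with additional pigeonhole control on the buffer zones $g_i^{-1}[(0,1)]$, so that the restrictions $\nu_{k_i}\rstr(X\setminus C_{i-1})$ keep norm $>1/2$ while the union of their supports is separated by the interiors of the $C_i$. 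Your proposal contains no analogue of this quantitative control, and without it the restrictions may lose all their mass.
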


This provides a great simplification as JN-sequences may initially be very complicated---see Proposition \ref{prop:square_fsjnseqs} and its proof for examples. As a corollary, we get that, at least in the case of compact Hausdorff spaces, the study of spaces carrying JN-sequences may be confined only to compactifications of $\omega$.


\begin{corollary}\label{cor:main}
Let $K$ be an infinite compact Hausdorff space (or, more generally, an infinite normal space). Then, $K$ carries a JN-sequence if and only if there exists a countable discrete subset $D$ of $K$ such that the closure $\ol{D}^K$ carries a disjointly supported JN-sequence $\seqn{\mu_n}$ with $\supp\big(\mu_n\big)\sub D$ for every $n\io$.
\end{corollary}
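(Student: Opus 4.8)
The plan is to derive both implications directly from Theorem~\ref{theorem:main}, the forward one being the substantial direction and the reverse one a routine restriction argument; I will handle the easier reverse implication first.

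For the reverse implication, I will assume that $D\sub K$ is a countable discrete set for which $\ol{D}^K$ carries a disjointly supported JN-sequence $\seqn{\mu_n}$ with $\supp(\mu_n)\sub D$ for every $n\io$, and simply reinterpret this sequence on $K$. Each $\mu_n$ is a finite combination of one-point measures concentrated at points of $D\sub K$, hence a finitely supported signed Borel measure on $K$ of norm one whose support computed in $K$ equals that computed in $\ol{D}^K$. Given $f\in C(K)$, its restriction $f\rstr\ol{D}^K$ lies in $C(\ol{D}^K)$, and since $\mu_n$ is concentrated on $\ol{D}^K$ I will conclude $\int_K f\der\mu_n=\int_{\ol{D}^K}(f\rstr\ol{D}^K)\der\mu_n\to0$, so that $\seqn{\mu_n}$ is a JN-sequence on $K$.

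For the forward implication, starting from a JN-sequence on $K$, I will apply Theorem~\ref{theorem:main} to obtain a JN-sequence $\seqn{\nu_n}$ on $K$ with pairwise disjoint supports whose union $D:=\bigcup_{n\io}\supp(\nu_n)$ is discrete in $K$. Since each support is finite and the index set is countable, $D$ will be a countable discrete subset of $K$ containing every $\supp(\nu_n)$. The one thing left to check is that $\seqn{\nu_n}$, regarded as finitely supported measures on the closed subspace $\ol{D}^K$, remains a JN-sequence there; the norms and the disjointness of supports carry over verbatim, so only the test-function convergence needs attention.

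The crux---and the only place where the normality hypothesis is used---will be verifying $\int_{\ol{D}^K}g\der\nu_n\to0$ for an arbitrary $g\in C(\ol{D}^K)$. Here I will invoke the Tietze extension theorem: as $K$ is normal (in particular when $K$ is compact Hausdorff) and $\ol{D}^K$ is closed in $K$, there is $f\in C(K)$ with $f\rstr\ol{D}^K=g$. Since $\nu_n$ is concentrated on $\ol{D}^K$, where $f$ and $g$ agree, I will obtain $\int_{\ol{D}^K}g\der\nu_n=\int_K f\der\nu_n\to0$, the convergence coming from the fact that $\seqn{\nu_n}$ is a JN-sequence on $K$. This completes the argument, reducing the corollary entirely to Theorem~\ref{theorem:main} supplemented by this single extension step; no genuine obstacle arises beyond ensuring that continuous functions on the closed set $\ol{D}^K$ lift to $K$.
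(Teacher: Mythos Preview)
Your proof is correct and follows exactly the approach indicated in the paper, which simply says to combine Theorem~\ref{theorem:main} with the Tietze extension theorem. You have spelled out both implications in full, but the underlying idea---apply Theorem~\ref{theorem:main} to get the discrete set $D$, then use Tietze to lift continuous functions from $\ol{D}^K$ to $K$---is the same.
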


It is natural to ask whether a Tychonoff space admitting a JN-sequence carries also one with supports having cardinality bounded by some constant $M\io$. The answer is negative---in Section \ref{section:sizes_two_examples} we provide an example of a Boolean algebra $\dD$ such that its Stone space $St(\dD)$ carries a JN-sequence and every JN-sequence $\seqn{\mu_n}$ on $St(\dD)$ satisfies the equality $\lim_{n\to\infty}\big|\supp\big(\mu_n\big)\big|=\infty$. It appears however that if sizes of supports of a given JN-sequence on a Tychonoff space are all bounded by some $M$, then we can actually find a JN-sequence with 2-element supports.

\begin{theorem}\label{theorem:main2}
Let $X$ be a Tychonoff space $X$. If there exist a JN-sequence $\seqn{\mu_n}$ on $X$ and a number $M\io$ such that $\big|\supp\big(\mu_n\big)\big|\le M$ for every $n\io$, then $X$ admits a JN-sequence $\seqn{\nu_n}$ such that $\big|\supp\big(\nu_n\big)\big|=2$ for every $n\io$. Consequently, there exist two disjoint sequences $\seqn{x_n}$ and $\seqn{y_n}$ of distinct points in $X$ such that
\[\lim_{n\to\infty}\big(f\big(x_n\big)-f\big(y_n\big)\big)=0\]
for every $f\in C(X)$.
\end{theorem}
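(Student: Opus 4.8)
The plan is to peel the problem down to a single pair of points that are asymptotically indistinguishable by all of $C(X)$. First I would apply Theorem~\ref{theorem:main} to replace $\seqn{\mu_n}$ by a JN-sequence whose supports are pairwise disjoint and whose union $D=\bigcup_{n\io}\supp(\mu_n)$ is a countable discrete subset of $X$; inspecting that construction (or by a direct argument) one checks that the bound $|\supp(\mu_n)|\le M$ is retained. Passing to a subsequence and applying the pigeonhole principle to the finitely many possible support-sizes and sign-patterns, I may assume that $|\supp(\mu_n)|=k\le M$ is constant and that $\mu_n=\sum_{l=1}^k a_n^l\delta_{x_n^l}$, where each coordinate $a_n^l$ has constant sign and (by sequential compactness of a cube) converges, say $a_n^l\to a^l$. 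Testing against the constant function $1$ gives $\sum_l a^l=0$, while $\|\mu_n\|=1$ gives $\sum_l|a^l|=1$; write $\{1,\dots,k\}=P\sqcup N$ for the indices with positive and negative coefficients.

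With this normalisation the whole theorem reduces to the following assertion: \emph{after passing to a further subsequence there are $i\in P$ and $j\in N$ such that, putting $u_n=x_n^i$ and $v_n=x_n^j$, one has $f(u_n)-f(v_n)\to0$ for every $f\in C(X)$.} Granting this, $\nu_n=\tfrac12\big(\delta_{u_n}-\delta_{v_n}\big)$ is a JN-sequence with two-point supports, and the ``consequently'' clause follows immediately with $x_n=u_n$, $y_n=v_n$ (after a routine thinning, using discreteness of $D$, so that the two sequences consist of distinct, non-repeating points). That some such pair must exist is visible already at the level of bounded functions: embedding the support points into $\beta X$ and taking an ultrafilter limit $\xi^l=\lim_{\mathcal U}x_n^l\in\beta X$, the relation $\int_X g\der\mu_n\to0$ for bounded $g$ yields $\sum_l a^l\delta_{\xi^l}=0$ as a measure on $\beta X$; grouping the atoms by coincidence and using $\sum_l|a^l|=1>0$, some group must carry cancelling mass, forcing $\xi^i=\xi^j$ for a pair $i\in P$, $j\in N$. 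Thus bounded continuous functions cannot asymptotically separate the positive from the negative support.

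The genuine obstacle is to convert this \emph{ultrafilter} coincidence into convergence along an honest subsequence, and simultaneously for \emph{all} (in particular unbounded) $f\in C(X)$ — the point being that the relevant function algebra $C\big(\ol D^{\beta X}\big)$ is typically non-separable and $\ol D^{\beta X}$ non-metrizable, so cluster points are not sequential limits and one cannot diagonalise over all $f$ at once. I would argue by contradiction. Suppose that for every pair $(i,j)\in P\times N$ and every infinite $S\sub\omega$ there is a bounded $g$ with $\limsup_{n\in S}\big|g(x_n^i)-g(x_n^j)\big|>0$. Since there are at most $M^2$ pair-types, a fusion (round-robin) argument that shrinks $S$ while recording the separating functions, and that uses normality of the compact space $\beta X$ together with the disjointness of the supports to patch successive Urysohn functions, should produce a single bounded $g^\ast\in C(X)$ with $\int_X g^\ast\der\mu_n\not\to0$ along a subsequence, contradicting the JN property of $\seqn{\mu_n}$. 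Making the patched $g^\ast$ simultaneously defeat all the finitely many pair-threads while remaining continuous is the delicate combinatorial core, and it is exactly here that the hypothesis $|\supp(\mu_n)|\le M$ is used.

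It remains to pass from bounded to arbitrary continuous functions for the pair $(u_n,v_n)$ just produced. For unbounded $f\in C(X)$ I would use the truncations $f_t=\max(\min(f,t),-t)\in C^*(X)$: one has $f_t(u_n)-f_t(v_n)\to0$ for every fixed $t$, so it suffices to know that $f(u_n)$ and $f(v_n)$ remain bounded, i.e.\ that the common limit of $u_n,v_n$ lies in the Hewitt realcompactification $\upsilon X$ rather than in $\beta X\sm\upsilon X$. This last fact I would read off from the \emph{genuine} convergence $\int_X f\der\mu_n\to0$ of the original sequence, which (unlike the bounded-function information) already constrains unbounded $f$ and prevents the chosen pair from escaping to a point at which some continuous function blows up. With this, $f(u_n)-f(v_n)\to0$ for every $f\in C(X)$, completing the argument.
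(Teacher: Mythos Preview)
Your outline has a real gap at precisely the point you flag as ``the delicate combinatorial core.'' You assume, for contradiction, that every pair $(i,j)\in P\times N$ can be separated by some bounded $g$ along every infinite $S$, and then propose to ``patch successive Urysohn functions'' into a single $g^\ast$ with $\int_X g^\ast\der\mu_n\not\to 0$. But there is no mechanism for this: the JN hypothesis already guarantees $\int_X g\der\mu_n\to 0$ for \emph{every} $g\in C(X)$, so any combination of your separating functions still integrates to zero, and the disjointness/discreteness of supports obtained from Theorem~\ref{theorem:main} does not help, since $g^\ast$ must be continuous on all of $X$, not merely on $D$. Your ultrafilter paragraph, by contrast, is correct and shows that for each free ultrafilter $\mathcal U$ on $\omega$ some pair has coinciding $\beta X$-limits; what is actually needed to finish is the observation that the finitely many closed sets $\{\mathcal U\in\beta\omega\sm\omega:\lim_{\mathcal U}x_n^i=\lim_{\mathcal U}x_n^j\}$ cover $\beta\omega\sm\omega$, so by Baire one of them contains a basic clopen $A^*$, and then that pair works along the subsequence indexed by $A$. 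This fills the gap, but it is not the ``fusion'' you sketched.

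The paper takes a different and shorter route that avoids Theorem~\ref{theorem:main}, avoids choosing a specific pair, and avoids $\upsilon X$ entirely. It lets $M$ be the \emph{minimal} constant support size realised by any JN-sequence on $X$; minimality (via Corollary~\ref{lemma:supports_decrease_sizes}) forces a uniform lower bound on $|\mu_n(\{x\})|$, and then Lemma~\ref{lemma:supports_minimal_sizes_limit_points} shows that every accumulation point of $\{\supp(\mu_n):n\io\}$ in the Vietoris hyperspace $\kK(\beta X)$ is a singleton --- otherwise a single Urysohn cut would produce a JN-sequence with strictly smaller supports, contradicting minimality. Once this is known, \emph{any} choice $x_n\neq y_n\in\supp(\mu_n)$ gives a JN-sequence: if $\frac12(\delta_{x_n}-\delta_{y_n})$ failed to be weak* null, a separating $f$ would witness that some accumulation point of the supports has positive diameter. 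The passage from bounded to arbitrary $f$ is a one-liner via Lemma~\ref{lemma:fsjn_f_bounded}: every $f\in C(X)$ is already bounded on $S(\mu_n)$, so one simply replaces $f$ by a bounded continuous function agreeing with $f$ there --- no appeal to the Hewitt realcompactification is needed.
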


\begin{corollary}\label{cor:main2}
If a Tychonoff space $X$ carries a JN-sequence, then either there is a JN-sequence $\seqn{\mu_n}$ on $X$ such that $\big|\supp\big(\mu_n\big)\big|=2$ for every $n\io$, or each JN-sequence $\seqn{\mu_n}$ on $X$ satisfies the equality $\lim_{n\to\infty}\big|\supp\big(\mu_n\big)\big|=\infty$.
\end{corollary}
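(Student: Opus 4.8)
The plan is to deduce the corollary from Theorem \ref{theorem:main2} by a short case analysis, using only one elementary auxiliary observation: any subsequence of a JN-sequence is again a JN-sequence. Indeed, if $\seqn{\mu_n}$ is a JN-sequence and $A\sub\omega$ is infinite, then enumerating $A$ increasingly as $\seqk{a_k}$ and setting $\nu_k=\mu_{a_k}$ yields a sequence with $\|\nu_k\|=1$ for all $k\io$, and $\int_Xf\der\nu_k\to0$ for every $f\in C(X)$, since a subsequence of a sequence of reals converging to $0$ still converges to $0$. Thus $\seqk{\nu_k}$ is a JN-sequence, and moreover $\big|\supp\big(\nu_k\big)\big|=\big|\supp\big(\mu_{a_k}\big)\big|$ for every $k$.

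Given this, I would fix a JN-sequence on $X$ (one exists by hypothesis) and ask whether the second alternative of the corollary holds. If it does, we are done. So suppose it fails: there is some JN-sequence $\seqn{\mu_n}$ on $X$ for which it is \emph{not} the case that $\lim_{n\to\infty}\big|\supp\big(\mu_n\big)\big|=\infty$. Negating the definition of this limit, there is a constant $M\io$ such that for every $N\io$ one can find $n\ge N$ with $\big|\supp\big(\mu_n\big)\big|\le M$; hence the set $A=\{n\io\colon \big|\supp\big(\mu_n\big)\big|\le M\}$ is infinite.

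Next I would pass to the subsequence indexed by $A$. By the observation above, the reindexed sequence $\seqk{\nu_k}$ is a JN-sequence on $X$, and now $\big|\supp\big(\nu_k\big)\big|\le M$ for every $k\io$. Applying Theorem \ref{theorem:main2} to this JN-sequence and to the bound $M$ produces a JN-sequence on $X$ all of whose supports have cardinality exactly $2$, which is precisely the first alternative of the corollary. This completes the dichotomy.

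The proof is essentially logical bookkeeping, and I do not anticipate any genuine obstacle: the only points requiring care are the (routine) verification that subsequences of JN-sequences are JN-sequences, and the correct negation of the statement $\lim_{n\to\infty}\big|\supp\big(\mu_n\big)\big|=\infty$ so as to extract an infinite index set on which the support sizes are bounded. All the real work is carried out in Theorem \ref{theorem:main2}, which is invoked as a black box.
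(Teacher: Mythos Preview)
Your argument is correct and matches the paper's approach: the paper simply states that the corollary is an immediate consequence of Theorem~\ref{theorem:sizes_of_supps} (equivalently, Theorem~\ref{theorem:main2}), and your write-up just unpacks this by negating the second alternative, passing to a bounded-support subsequence, and invoking Theorem~\ref{theorem:main2}.
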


\medskip 

The paper is organised as follows. In the next section we briefly recall standard definitions and notions. In Section \ref{section:jn_sequences} we provide basic topological properties of JN-sequences on Tychonoff spaces. Section \ref{section:fsjn_disjoint_supps} is devoted to prove that if a Tychonoff space carries a JN-sequence, then it carries one with disjoint supports (Theorem \ref{theorem:disjoint_supps}). In Section \ref{section:discrete} we go further and prove that we can even find a JN-sequence with discrete union of supports (Theorem \ref{theorem:discrete}). In the last section, Section \ref{section:sizes_of_supports}, we study possible cardinalities of supports of JN-sequences.

\section{Preliminaries\label{section:notation}}

%

If $X$ is a set and $A$ its subset, then $A^c=X\sm A$, and $\chi_A$ denotes the characteristic function of $A$ in $X$. We also set $1_X=\chi_X$, that is, $1_X$ is the constant one function on $X$. The cardinality of a set $X$ is denoted by $|X|$. $\omega$ denotes the first infinite cardinal number and $\frakc$ denotes the continuum, i.e., the size of the real line $\R$.

\medskip

Throughout the paper, we assume that all topological spaces we consider are \textbf{Tychonoff}, so in particular every compact space we deal with is normal. 
If $X$ is a (Tychonoff) space and $A$ its subspace, then $\ol{A}^X$ denotes the closure of $A$ in $X$. 
$\beta X$ denotes the \v{C}ech--Stone compactification of $X$. 
We also usually identify $\omega$ with the discrete space $\N$ of natural numbers.

If $X$ is a space, then by $C(X)$ we denote the space of real-valued continuous functions on $X$. For $a<b\in\mathbb{R}$ we also set $C(X,[a,b])=\big\{f\in C(X)\colon \forall x\in X,\ a\le f(x)\le b\}$. For every $f\in C(X)$ we set $\|f\|_\infty=\sup\big\{|f(x)|\colon x\in X\big\}$. By $C_p(X)$ we denote the space $C(X)$ endowed with the pointwise topology (i.e., the topology inherited from the product space $\R^X$). 

\medskip

Concerning measures on Tychonoff spaces, we will only deal with finite Borel ones. Let $X$ be a space. For any (finite Borel) measure $\mu$ on $X$ and a $\mu$-integrable real-valued function $f$ on $X$, we briefly set $\mu(f)=\int_X f\der\mu$. For every $x\in X$ by $\delta_x$ we mean \textit{the point measure} (or \textit{the Dirac measure}) concentrated at $x$ and defined as $\delta_x(A)=\chi_A(x)$. A measure $\mu$ on $X$ is \textit{finitely supported} if it can be written as a finite linear combination of point measures, i.e., there exist finite sequences $x_0,\ldots,x_n$ of distinct points in $X$ and $\alpha_0,\ldots,\alpha_n$ of non-zero real numbers such that:
\[\mu=\sum_{i=0}^n\alpha_i\cdot\delta_{x_i}.\]
For such measure $\mu$, its \textit{support} $\supp(\mu)$ is the set $\big\{x_0,\ldots,x_n\big\}$, and \textit{the variation} of $\mu$ is given by the formula
\[|\mu|=\sum_{x\in\supp(\mu)}\big|\alpha_x\big|\cdot\delta_x,\]
hence the norm $\|\mu\|$ of $\mu$ is equal to $\sum_{x\in\supp(\mu)}\big|\alpha_x\big|$. For any real-valued function $f$ on $X$ we have:
\[\mu(f)=\int_X f\der\mu = \sum_{x\in\supp(\mu)}\alpha_xf(x).\]

\medskip

The following definition is crucial for our paper.
\begin{definition}\label{def:seq_measures}
If $\mu_n$ is a finitely supported measure on a space $X$, for every $n\in \omega$, then we say that the sequence $\seqn{\mu_n}$ is \textit{finitely supported}. A finitely supported sequence $\seqn{\mu_n}$ is
\begin{enumerate}
\item \textit{weak* convergent} to a finitely supported measure $\mu$ on $X$ if\newline $\lim_{n\to\infty}\mu_n(f)=\mu(f)$ for every $f\in C(X)$, 
\item \textit{weak* null} if it is weak* convergent to the zero measure $0$ on $X$.
\end{enumerate}
\end{definition}

\section{JN-sequences of measures\label{section:jn_sequences}}

This section is devoted to the study of basic analytic and topological properties of JN-sequences. 
The first lemma shows that measures in a JN-sequence have eventually similar absolute values on their negative and positive parts, equal to $\approx1/2$. It follows immediately from the definition of a JN-sequence applied for the constant function $1_X$ on $X$.

\begin{lemma}\label{lemma:jnseq_pos_neg}
Let $\seqn{\mu_n}$ be a JN-sequence on a space $X$. For every $n\io$ let $P_n=\big\{x\in\supp\big(\mu_n\big)\colon\ \mu_n(\{x\})>0\big\}$ and $N_n=\supp\big(\mu_n\big)\sm P_n$. Then,
\[\lim_{n\to\infty}\big\|\mu_n\rstr P_n\big\|=\lim_{n\to\infty}\big\|\mu_n\rstr N_n\big\|=1/2.\]\noproof
\end{lemma}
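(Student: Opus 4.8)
The plan is to reduce the whole statement to testing the JN-sequence against the single function $1_X$. Writing each measure in its canonical form $\mu_n=\sum_{x\in\supp(\mu_n)}\alpha_x\delta_x$ with all coefficients nonzero, I would set $p_n=\big\|\mu_n\rstr P_n\big\|$ and $q_n=\big\|\mu_n\rstr N_n\big\|$. By the definitions of $P_n$ and $N_n$ one has $p_n=\sum_{x\in P_n}\alpha_x$ and $q_n=-\sum_{x\in N_n}\alpha_x$, both nonnegative. The first ingredient is the normalization: since $\|\mu_n\|=\sum_{x\in\supp(\mu_n)}|\alpha_x|$, splitting this sum according to the sign of $\alpha_x$ gives $p_n+q_n=\|\mu_n\|=1$ for every $n\io$.

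The second ingredient is that the constant function $1_X$ is continuous on any topological space, hence $1_X\in C(X)$, and $\mu_n(1_X)=\sum_{x\in\supp(\mu_n)}\alpha_x=p_n-q_n$. Applying the defining convergence property of the JN-sequence to $f=1_X$ then yields $\mu_n(1_X)\to0$, that is, $p_n-q_n\to0$.

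It remains only to combine the elementary relations $p_n+q_n=1$ and $p_n-q_n\to0$: adding them gives $2p_n\to1$, so $p_n\to1/2$, and subtracting gives $q_n\to1/2$, which proves both limits simultaneously. There is essentially no obstacle here, exactly as the excerpt announces; the only point deserving a word is the immediate fact that the total variation of a finitely supported signed measure splits as the sum of the norms of its positive and negative parts, which follows directly from the formula for $\|\mu\|$ recorded in Section \ref{section:notation}.
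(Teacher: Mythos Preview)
Your proof is correct and follows exactly the approach the paper indicates: testing the JN-sequence against the constant function $1_X$ yields $p_n-q_n\to0$, which together with $p_n+q_n=1$ gives both limits. This is precisely what the paper means by ``follows immediately from the definition of a JN-sequence applied for the constant function $1_X$''.
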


For a given finitely supported sequence $\seqn{\mu_n}$ of measures on a space $X$, let us put:
\[S\big(\seqn{\mu_n}\big)=\bigcup_{n\io}\supp\big(\mu_n\big),\]
\[LS\big(\seqn{\mu_n}\big)=\Big\{x\in X\colon\ \limsup_{n\to\infty}\big|\mu_n(\{x\})\big|>0\Big\},\]
\[LI\big(\seqn{\mu_n}\big)=\Big\{x\in X\colon\ \liminf_{n\to\infty}\big|\mu_n(\{x\})\big|>0\Big\},\]
and
\[L\big(\seqn{\mu_n}\big)=\Big\{x\in X\colon\ \lim_{n\to\infty}\mu_n(\{x\})\text{ exists and is not }0\Big\}.\]
We will usually write shorter $S\big(\mu_n\big)$, $LS\big(\mu_n\big)$, $LI\big(\mu_n\big)$, and $L\big(\mu_n\big)$ instead of $S\big(\seqn{\mu_n}\big)$, $LS\big(\seqn{\mu_n}\big)$, $LI\big(\seqn{\mu_n}\big)$, and $L\big(\seqn{\mu_n}\big)$, or even simply $S$, $LS$, $LI$, and $L$ if the sequence $\seqn{\mu_n}$ is clear from the context. Of course, always $L\sub LI\sub LS\sub S$, but the reverse inclusions may not hold (cf. Proposition \ref{prop:square_fsjnseqs}).

\begin{lemma}\label{lemma:s_infinite}
If $\seqn{\mu_n}$ is a JN-sequence on a space $X$, then $S$ is infinite.
\end{lemma}
\begin{proof}
If $S$ is finite, then there exists $x_0\in S$ and $\eps>0$ such that $\limsup_{n\to\infty}\big|\mu_n\big(\big\{x_0\big\}\big)\big|>\eps$ (if not, then there is $N\io$ such that $\big|\mu_n(\{x\})\big|<1/|S|$ for every $x\in S$ and $n>N$, which implies that $\big\|\mu_n\big\|<1$ for every $n>N$). Let $f\in C(X)$ be such that $f(x_0)=1$ and $f(x)=0$ for every $x\in S\sm\big\{x_0\big\}$. It follows that $\limsup_{n\to\infty}\big|\mu_n(f)\big|>\eps$, which is a contradiction.
\end{proof}

Note that despite the fact that the set $S$ is a countable subset of $X$ its topology may be very hard to study---see e.g. \cite{Lev77}, where it was proved that there exist $2^\frakc$ many non-homeomorphic countable regular (hence normal) spaces without points of countable character. Also, in \cite{MSnf} we provided a description of $2^\frakc$ many non-homeomorphic countable regular spaces which admit JN-sequences and have only one limit point.

\begin{remark}\label{remark:jn_pointwise_limits}
Let $\seqn{\mu_n}$ be a JN-sequence on a given space $X$. Then, since $S$ is countable, by induction we can find a subsequence $\seqk{\mu_{n_k}}$ such that $\lim_{k\to\infty}\big|\mu_{n_k}(\{x\})\big|$ exists for every $x\in X$. Denote each such limit by $\mu(\{x\})$. Then, $\mu=\sum_{x\in S}\alpha_x\cdot\delta_x$ for some $\alpha_x\in\R$, $x\in S$, and $\|\mu\|=\sum_{x\in S}\big|\alpha_x\big|\le1$. To see the latter, note that for every finite $F\sub S$ we have:
\[\|\mu\rstr F\|=\sum_{x\in F}|\mu(\{x\})|=\lim_{k\to\infty}\sum_{x\in F}\big|\mu_{n_k}(\{x\})\big|=\lim_{k\to\infty}\big\|\mu_{n_k}\rstr F\big\|\le 1.\]
\end{remark}

\begin{definition}
A sequence $\seqn{\mu_n}$ of finitely supported measures on a space $X$ is \textit{pointwise convergent} if the limit $\lim_{n\to\infty}\mu_n\big(\{x\}\big)$ exists for every $x\in X$.
\end{definition}

Note that the definition is equivalent to say that $\lim_{n\to\infty}\mu_n\big(\{x\}\big)=0$ for every $x\in X\sm L$. It follows that $L\big(\mu_n\big)=LI\big(\mu_n\big)=LS\big(\mu_n\big)\sub S\big(\mu_n\big)$ if $\seqn{\mu_n}$ is pointwise convergent. By the previous remark, every JN-sequence $\seqn{\mu_n}$ on a space $X$ contains a pointwise convergent JN-(sub)sequence $\seqk{\mu_{n_k}}$. Of course, every subsequence of a pointwise convergent sequence of measures is also pointwise convergent.

The proof of the following lemma is left to the reader.

\begin{lemma}\label{lemma:fsjn_subsequences_sets}
For every finitely supported sequence $\seqn{\mu_n}$ of measures on a space $X$ and its subsequence $\seqk{\mu_{n_k}}$ it holds:
\begin{enumerate}[(i)]
    \item $S\big(\seqk{\mu_{n_k}}\big)\sub S\big(\seqn{\mu_n}\big)$;
    \item $LS\big(\seqk{\mu_{n_k}}\big)\sub LS\big(\seqn{\mu_n}\big)$;
    \item $LI\big(\seqn{\mu_n}\big)\sub LI\big(\seqk{\mu_{n_k}}\big)$;
    \item $L\big(\seqn{\mu_n}\big)\sub L\big(\seqk{\mu_{n_k}}\big)$.
\end{enumerate}
If $\seqn{\mu_n}$ is pointwise convergent, then
\[L\big(\seqn{\mu_n}\big)=L\big(\seqk{\mu_{n_k}}\big)=LS\big(\seqk{\mu_{n_k}}\big)=LS\big(\seqn{\mu_n}\big).\]\noproof
\end{lemma}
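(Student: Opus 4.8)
The plan is to reduce each claim to the elementary behaviour of $\limsup$, $\liminf$, and ordinary limits under passage to a subsequence, applied separately, for each fixed $x\in X$, to the real sequence $\big\langle\mu_n(\{x\})\colon n\io\big\rangle$. Concretely, the only facts I would invoke are that for a bounded real sequence $\big\langle a_n\colon n\io\big\rangle$ and any strictly increasing $\big\langle n_k\colon k\io\big\rangle$ one has $\limsup_k a_{n_k}\le\limsup_n a_n$ and $\liminf_k a_{n_k}\ge\liminf_n a_n$, and that if $\lim_n a_n$ exists then $\lim_k a_{n_k}$ exists and coincides with it. Each of the four inclusions then falls out by choosing $a_n=|\mu_n(\{x\})|$ (or $a_n=\mu_n(\{x\})$ in the last case).

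First, (i) is immediate, since $\{n_k\colon k\io\}\sub\omega$ forces $\bigcup_k\supp(\mu_{n_k})\sub\bigcup_n\supp(\mu_n)$. For (ii) I would fix $x$ with $\limsup_k|\mu_{n_k}(\{x\})|>0$ and apply the $\limsup$ inequality to get $\limsup_n|\mu_n(\{x\})|\ge\limsup_k|\mu_{n_k}(\{x\})|>0$, whence $x\in LS(\mu_n)$. Part (iii) is the dual statement: if $\liminf_n|\mu_n(\{x\})|>0$, then $\liminf_k|\mu_{n_k}(\{x\})|\ge\liminf_n|\mu_n(\{x\})|>0$, so $x\in LI(\mu_{n_k})$. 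For (iv), if $\lim_n\mu_n(\{x\})$ exists and is nonzero, then $\lim_k\mu_{n_k}(\{x\})$ exists, equals that limit, and is therefore nonzero, giving $x\in L(\mu_{n_k})$.

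For the final assertion I would use that, as already observed for pointwise convergent sequences, $L(\mu_n)=LI(\mu_n)=LS(\mu_n)$, and that, since every subsequence of a pointwise convergent sequence is again pointwise convergent, also $L(\mu_{n_k})=LS(\mu_{n_k})$. Chaining the inclusions from (ii) and (iv) with these two equalities yields
\[LS(\mu_{n_k})\sub LS(\mu_n)=L(\mu_n)\sub L(\mu_{n_k})=LS(\mu_{n_k}),\]
and the resulting cycle of inclusions forces all four sets to coincide. I do not anticipate any genuine obstacle here: the entire content is the monotonicity of $\limsup$/$\liminf$ under subsequences. The one point that merits attention is keeping the inclusion directions straight, since they reverse between $LS$ and $LI$ (which is precisely why (ii) and (iii) point the opposite way), and likewise between the sequence and its subsequence in (iii)--(iv).
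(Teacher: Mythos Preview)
Your proposal is correct and is exactly the elementary verification the paper has in mind: the authors explicitly leave the proof to the reader, so there is no alternative approach to compare against. The argument via monotonicity of $\limsup$/$\liminf$ under subsequences and the chain of inclusions for the pointwise convergent case is precisely what is intended.
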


The following proposition asserts that the unit square $[0,1]^2$ admits JN-sequences satisfying various proper inclusions between sets $L$, $LI$, $LS$, and $S$, as well as they have other quantitative properties. It also shows that even in the case of a metric space a JN-sequence may be quite intricate.

\begin{proposition}\label{prop:square_fsjnseqs}
Let $\alpha\in[0,1)$. The unit square $[0,1]^2$ admits JN-sequences $\seqn{\mu_n^1}$, $\seqn{\mu_n^2}$, $\seqn{\mu_n^3}$ and $\seqn{\mu_n^4}$ such that:
\begin{enumerate}
    \item 
$\emptyset\neq L\big(\mu_n^1\big)\subsetneq LI\big(\mu_n^1\big)\subsetneq LS\big(\mu_n^1\big)\subsetneq S\big(\mu_n^1\big)$;
    \medskip
    \item 
    \begin{enumerate}[(i)]
        \item $LS\big(\mu_n^2\big)=\big([0,1]\cap\Q\big)\times\{0\}$, so $LS\big(\mu_n^2\big)$ is dense-in-itself;
        \item $\emptyset=L\big(\mu_n^2\big)=LI\big(\mu_n^2\big)\subsetneq LS\big(\mu_n^2\big)\subsetneq S\big(\mu_n^2\big)$;
        \item $\mu_n^2(\{x\})\in\{0,1/2\}$ for every $x\in LS\big(\mu_n^2\big)$ and $n\io$;
        \item for every $x\in LS\big(\mu_n^2\big)$ we have $\limsup_{n\to\infty}\mu_n^2(\{x\})=1/2$, so for every finite $F\sub LS\big(\mu_n^2\big)$ it holds:
        \[\sum_{x\in F}\limsup_{n\to\infty}\mu_n^2(\{x\})=|F|/2,\]
        and hence:
        \[\sum_{x\in LS(\mu_n^2)}\limsup_{n\to\infty}\mu_n^2(\{x\})=\infty;\]
    \end{enumerate}
    \medskip
    \item 
    \begin{enumerate}[(i)]
        \item $L\big(\mu_n^3\big)=\big([0,1]\cap\Q\big)\times\{0\}$, so $L\big(\mu_n^3\big)$ is dense-in-itself;
        \item $\emptyset\neq L\big(\mu_n^3\big)=LI\big(\mu_n^3\big)=LS\big(\mu_n^3\big)\subsetneq S\big(\mu_n^3\big)$;
        \item 
        \[\sum_{x\in L(\mu_n^3)}\lim_{n\to\infty}\mu_n^3(\{x\})=(1-\alpha)/2\le1/2\]
        and
        \[\lim_{n\to\infty}\big\|\mu_n\rstr L\big\|=(1-\alpha)/2\le1/2;\]
    \end{enumerate}
    \medskip
    \item 
    \begin{enumerate}[(i)]
        \item $L\big(\mu_n^4\big)=\big\{k/2^{n+1}\colon\ k,n\io,\ 0\le k<2^{n+1}\big\}\times\{0\}$;
        \item $\emptyset\neq L\big(\mu_n^4\big)=LI\big(\mu_n^4\big)=LS\big(\mu_n^4\big)=S\big(\mu_n^4\big)$;
        \item $\big\|\mu_n^4\rstr L\big\|=1$ for every $n\io$.
    \end{enumerate}
\end{enumerate}
\end{proposition}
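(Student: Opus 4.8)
The plan is to manufacture all four sequences from one elementary weak*-null building block and then either combine finitely many copies of it (for (1)--(3)) or iterate it along the dyadic grid (for (4)). The block is the following observation: since $[0,1]^2$ is compact metric, every $f\in C([0,1]^2)$ is uniformly continuous, so if $\rho$ denotes the metric, $\omega_f$ the modulus of continuity of $f$, and $\lambda=\sum_i\alpha_i\big(\delta_{a_i}-\delta_{b_i}\big)$ is a finite norm-one combination (so $\sum_i|\alpha_i|=1/2$) of pairs with $\max_i\rho(a_i,b_i)\le\eta$, then $|\lambda(f)|\le\tfrac12\omega_f(\eta)$. Hence any sequence of such measures whose pairing radii $\eta_n$ tend to $0$ is automatically a JN-sequence, and the entire task reduces to arranging supports so that $L$, $LI$, $LS$, $S$ come out as prescribed. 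I fix once and for all an enumeration $\langle q_j\colon j\io\rangle$ of $[0,1]\cap\Q$, a null sequence $\eps_n\downarrow0$, and a supply of ``disposable'' partner points (fresh, pairwise distinct across all stages, each paired with something collapsing onto it) that will sit in $S\setminus LS$.

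For (2) I would take, at stage $n$, the single pair $\mu_n^2=\tfrac12\delta_{(q_{j(n)},0)}-\tfrac12\delta_{(q_{j(n)},\eps_n)}$ with $j\colon\omega\to\omega$ hitting every value infinitely often: then each $(q_j,0)$ carries weight $1/2$ infinitely often and $0$ otherwise (so $\limsup=1/2$, $\liminf=0$), giving (2)(i)--(iv) at once. For (3) the residues should sit at \emph{all} rationals simultaneously: with positive summable $c_j$ and $C_n=2\sum_{j\le n}c_j$, set
\[\mu_n^3=(1-\alpha)\,\frac1{C_n}\sum_{j=0}^n c_j\big(\delta_{(q_j,0)}-\delta_{(q_j,\eps_n)}\big)+\frac\alpha2\big(\delta_{a_n}-\delta_{b_n}\big),\]
where $a_n,b_n$ are disposable with irrational first coordinate; the weight at $(q_j,0)$ converges to $(1-\alpha)c_j/(2\sum_k c_k)\neq0$, so $L=LI=LS=(\Q\cap[0,1])\times\{0\}$, the partners give $S\supsetneq L$, and summing the limits yields $(1-\alpha)/2=\lim_n\|\mu_n^3\rstr L\|$. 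For (1) I would superpose four norm-balanced gadgets on disjoint point sets: with $a+b+c=1/2$ and distinct $p,r,s\in[0,1]\times\{0\}$, a constant term $a(\delta_p-\delta_{p_n})$ (puts $p\in L$), an alternating term $(-1)^n b(\delta_r-\delta_{r_n})$ (puts $r\in LI\setminus L$), a term $c(\delta_s-\delta_{s_n})$ active only for even $n$ (puts $s\in LS\setminus LI$), and a disposable ``filler'' $c(\delta_{u_n}-\delta_{v_n})$ active only for odd $n$ so that $\|\mu_n^1\|\equiv 2(a+b+c)=1$; the sum is weak* null as a finite sum of blocks.

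Construction (4) is the crux and the only delicate one, because (4)(i)--(iii) force $S=L=D\times\{0\}$ for $D=$ the dyadic rationals in $[0,1)$ with \emph{no} disposable points: every point ever used must have a nonzero limiting weight, while the sequence stays weak* null. By the Fatou bound of Remark~\ref{remark:jn_pointwise_limits} the limiting weights must have $\ell^1$-sum strictly below $1$, so a definite amount of variation has to escape to ever finer scale at each stage, yet the fine points carrying it are themselves required to be residues. I would resolve this with an ``aging front.'' Fix summable positive weights $c_d$ depending only on the level $m(d)$ (say $c_d=3^{-m(d)}$), write $G_n=\{k/2^{n+1}\colon 0\le k<2^{n+1}\}$ (so $G_n\setminus G_{n-1}$ are the midpoints of consecutive $G_{n-1}$-points), and set
\[\mu_n=\sum_{m(d)<n}c_d\,\delta_{(d,0)}+\sum_{p\in G_n\setminus G_{n-1}}\big(c_p+\phi_n(p)\big)\,\delta_{(p,0)},\qquad \phi_n(p)=-\tfrac12\big(c_a+c_b\big),\]
where $a,b$ are the two $G_{n-1}$-neighbours of the midpoint $p$ (one-sided at $0$). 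The correction $\phi_n$ lives only on the newest points, so from stage $m(d)+1$ on the weight at $(d,0)$ equals $c_d$; hence each weight converges to $c_d\neq0$. Splitting $\mu_n(f)=\sum_{m(d)\le n}c_df(d,0)+\phi_n(f)$, the first sum tends to $\sum_d c_df(d,0)$, while a dominated-convergence argument (dominating function $2\|f\|_\infty c_d\in\ell^1$, using that both midpoints of each $G_{n-1}$-point converge to it) gives $\phi_n(f)\to-\sum_d c_df(d,0)$, so $\mu_n(f)\to0$. Since $\|\mu_n\|$ converges to a positive limit, normalising $\nu_n=\mu_n/\|\mu_n\|$ keeps all weight-limits nonzero and delivers $\|\nu_n\rstr L\|=1$ with $L=LI=LS=S=D\times\{0\}$.

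I expect (4) to be the main obstacle: the aging front must simultaneously (a) leave each dyadic point a nonzero limit, (b) retain enough persistent variation that $\|\mu_n\|$ is bounded away from $0$ and convergent, and (c) cancel the settled part in \emph{every} integral, and these three requirements pull against one another. The delicate point is the interchange of limit and sum in (c) over the growing index set $G_{n-1}$, which is exactly where summability of $\langle c_d\colon d\in D\rangle$ is used, together with the bookkeeping that guarantees $c_p+\phi_n(p)$ never creates an unintended extra residue. By contrast (1)--(3) are routine once the elementary block is isolated, the only care needed being the freshness and pairwise distinctness of the disposable partner points so that nothing unintended enters $LS$.
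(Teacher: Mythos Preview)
For (1)--(3) your constructions follow the paper's pattern---pair each anchor with a nearby partner and use uniform continuity of $f\in C([0,1]^2)$---with only cosmetic differences (alternating signs versus alternating magnitudes for the $LI\setminus L$ witness in (1); normalised partial sums versus fixed geometric weights plus a balancing tail in (3)). For (4) your ``aging front'' is a genuinely different idea. The paper uses a Haar-type alternating-sign scheme on the dyadic grid: at stage $n$ the even-indexed points of scale $2^{-(n+1)}$ carry positive weight inherited from stage $n-1$ and the odd-indexed ones carry negative weight, so every $\pm$ pair is $2^{-(n+1)}$-close and weak*-nullity follows from uniform continuity with the explicit bound $\|\nu_n\|=1-2^{-(n+1)}$. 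You instead freeze every settled point at a positive weight $c_d$ and push all cancellation onto the newest midpoints via $\phi_n$, then invoke dominated convergence over the growing grid $G_{n-1}$ to get $\phi_n(f)\to-\sum_d c_d f(d,0)$. Your route makes the nonzero residue at each dyadic point immediate, but trades the paper's one-line uniform-continuity estimate for a limit interchange and a separate normalisation step.

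There is a genuine loose end in your (4). With the rule $\phi_n(p)=-\tfrac12(c_a+c_b)$, the point $0\in G_{n-1}$ is adjacent to only \emph{one} new midpoint, so when you reassemble $\phi_n(f)$ as a sum over $d\in G_{n-1}$ you obtain $\phi_n(f)\to-\tfrac12 c_0 f(0,0)-\sum_{d\neq 0}c_d f(d,0)$, and hence $\mu_n(f)\to\tfrac12 c_0 f(0,0)\neq 0$ for generic $f$. Your parenthetical ``(one-sided at $0$)'' shows you see the asymmetry but does not say how you compensate; the repair is routine (e.g.\ double the $c_0$ contribution at the leftmost midpoint, or work on $\R/\Z$, or include $1$ in the grid), but as written the sequence is not weak* null. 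The rightmost midpoint $(2^{n+1}-1)/2^{n+1}$, whose right neighbour $1$ lies outside $G_{n-1}$, needs the same attention.
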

\begin{proof}
Put $K=[0,1]^2$ and fix an enumeration (without repetitions) $\big\{q_n\colon\ n\io\big\}$ of $[0,1]\cap\Q$.

\medskip

(1) If $n\io$ is even, then let $\mu_n^1$ be defined as follows:
    \[\mu_n^1={\textstyle\frac{1}{4}}\big(\delta_{(0,0)}-\delta_{(0,1/(n+1))}\big)+{\textstyle\frac{1}{4}}\big(\delta_{(1/2,0)}-\delta_{(1/2,1/(n+1))}\big),\]
    and if $n$ is odd, then define $\mu_n^1$ as follows:
    \[\mu_n^1={\textstyle\frac{1}{4}}\big(\delta_{(0,0)}-\delta_{(0,1/(n+1))}\big)+{\textstyle\frac{1}{8}}\big(\delta_{(1/2,0)}-\delta_{(1/2,1/(n+1))}\big)+{\textstyle\frac{1}{8}}\big(\delta_{(1,0)}-\delta_{(1,1/(n+1))}\big).\]
    It is immediate that $\seqn{\mu_n^1}$ is a JN-sequence on $K$ and:
    \[L\big(\mu_n^1\big)=\big\{(0,0)\big\},\]
    \[LI\big(\mu_n^1\big)=\big\{(0,0),\ (1/2,0)\big\},\]
    \[LS\big(\mu_n^1\big)=\big\{(0,0),\ (1/2,0),\ (1,0)\big\},\]
    \[S\big(\mu_n^1\big)=\big\{(0,0),\ (1/2,0),\ (1,0)\big\}\cup\big\{(x,1/(n+1)\colon\ x\in\{0,1/2,1\},\ n\io\big\},\]
    which yields (1).

    \medskip

(2) Let $\big\{P_n\colon\ n\io\big\}$ be a partition of $\omega$ into infinite sets. For every $n\io$ and $k\in P_n$ write:
    \[\mu_k^2={\textstyle\frac{1}{2}}\big(\delta_{(q_n,0)}-\delta_{(q_n,1/(k+1))}\big).\]
    Then, for each $k\io$ we have $\big\|\mu_k^2\big\|=1$ and it is immediate that for every $n\io$ the sequence $\seq{\mu_k^2}{k\in P_n}$ is weak* null. Since every $f\in C(K)$ is uniformly continuous, one can easily verify that the whole sequence $\seqk{\mu_k^2}$ is also weak* null. 

    That the conditions (i)--(iv) are satisfied follows directly from the definition of the sequence $\seqn{\mu_n^2}$.

    \medskip

(3) 
    For every $n\io$ define the measure $\mu_n^3$ as follows:
    \[\mu_n^3=(1-\alpha)\cdot\sum_{k=0}^n\big(\delta_{(q_k,0)}-\delta_{(q_k,1/(n+1))}\big)/2^{k+2}+\Big(\frac{\alpha}{2}+\frac{1-\alpha}{2^{n+2}}\Big)\cdot\big(\delta_{(0,1-1/(n+1))}-\delta_{(0,1-1/(n+2))}\big).\]
    It follows that $\big\|\mu_n^3\big\|=1$. 
    That $\seqn{\mu_n^3}$ is weak* null follows again from the fact that every $f\in C(K)$ is uniformly continuous.

    For every $k\io$ and $n\ge k$ we have:
    \[\tag{$*$}\mu_n^3\big(\big\{(q_k,0)\big\}\big)=(1-\alpha)/2^{k+2},\]
    so $\big(q_k,0\big)\in L\big(\mu_n^3\big)$. If $x\in K$ is of the form $\big(q_k,1/(n+1)\big)$ or $\big(0,1-1/n\big)$ for some $k,n\io$, then $\mu_l^3(\{x\})=0$ for every $l>n+2$, so $x\not\in L\big(\mu_n^3\big)$. Thus, (i) is satisfied. (ii) follows immediately from (i) and the definition of $\seqn{\mu_n^3}$. (iii) follows from ($*$).

    \medskip

(4) Let $n\io$. Put $P_n=\big\{0,\ldots,2^n-1\big\}$ and for each $k\in P_n$ write $e_k^n=(2k)/2^{n+1}$ and $o_k^n=(2k+1)/2^{n+1}$. Note that $e_0^n=0$. Put: $E_n=\big\{e_k^n\colon k\in P_n\big\}$, $O_n=\big\{o_k^n\colon k\in P_n\big\}$ and $S_n=E_n\cup O_n$. The set $S_n\times\{0\}$ will be the support of the measure $\mu_n^4$ we are going to construct.

    Note that for every $n\io$ we have $S_n=E_{n+1}$ and $\big|S_n\big|=2\big|P_n\big|=2\cdot2^n$, so $\big|S_{n+1}\big|=2\big|S_n\big|$. For every $n\io$ let $c_n=1/2^{n+1}$ and define the auxiliary measure $\nu_n$ as follows:
    \[\nu_n=\sum_{k\in P_n}\alpha_k^n\cdot\big(\delta_{(e_k^n,0)}-\delta_{(o_k^n,0)}\big),\]
    where the coefficients $\alpha_k^n$'s are defined in the following way: for $n=0$ we simply set $\alpha_0^0=1/4$ and for every $n>0$ and $k\in P_n$ we define:
    \[\alpha_k^n=\begin{cases}
    \alpha_{k/2}^{n-1},&\text{ if }e_k^n\in E_{n-1},\\
    c_n/2^n,&\text{ otherwise.}
    \end{cases}\]
    Note that if $e_k^n\in E_{n-1}$, then $k$ is even, so $\alpha_{k/2}^{n-1}$ is well-defined. It also holds $\big|\supp\big(\nu_n\big)\big|=2^{n+1}$.

    It follows that $\big\|\nu_n\big\|=1-c_n$. Indeed, this is obviously true for $n=0$, so fix $n\ge0$ and assume that $\big\|\nu_n\big\|=1-c_n$. Since $E_n\sub S_n=E_{n+1}\sub S_{n+1}$ and $\big|O_{n+1}\big|=\big|E_{n+1}\big|=\big|S_n\big|=2^{n+1}$, we have:
    \[\big\|\nu_{n+1}\big\|=\big\|\nu_n\big\|+2\cdot\big(2^{n+1}/2\big)\cdot\frac{c_{n+1}}{2^{n+1}}=1-c_n+c_{n+1}=1-c_{n+1},\]
    as required.

    We will now show that $\seqn{\nu_n}$ is weak* null. Let $f\in C(K)$ and $\eps>0$. Again, note that $f$ is uniformly continuous, so there is $\delta>0$ such that for every $n\io$ if $1/2^{n+1}<\delta$, then $\big|f\big(e_k^n,0\big)-f\big(o_k^n,0\big)\big|<\eps$. Let thus $N$ be such that $1/2^{n+1}<\delta$ for every $n>N$. We have:
    \[\big|\nu_n(f)\big|\le\sum_{k\in P_n}\alpha_k^n\cdot\big|f\big(e_k^n,0\big)-f\big(o_k^n,0\big)\big|<\eps\cdot\sum_{k\in P_n}\alpha_k^n<\eps\cdot\big(1-c_n\big)<\eps,\]
    which yields that $\lim_{n\to\infty}\nu_n(f)=0$.

    Finally, for every $n\io$ let
    \[\mu_n^4=c_n\cdot\delta_{(e_0^n,0)}+\nu_n,\]
    so $\mu_n^4\big(\big\{\big(e_0^n,0\big)\big\}\big)=c_n+\alpha_0^n$, and hence $\big\|\mu_n^4\big\|=1$ and $(0,0)\in L\big(\mu_n^4\big)$. Since $\lim_{n\to\infty}c_n=0$, the sequence $\seqn{\mu_n^4}$ is weak* null, too.

    We will now prove (i) and (ii) together. First, notice that $\supp\big(\mu_n^4\big)=S_n\times\{0\}$ for every $n\io$, so
    \[S\big(\mu_n^4\big)=\bigcup_{n\io}S_n\times\{0\}=\big\{k/2^{n+1}\colon\ k,n\io,\ 0\le k<2^{n+1}\big\}\times\{0\}.\]
    Next, if for $x\in(0,1]$ and $n\io$ it holds that $x\in S_n$, then $x\in E_{n+1}$, so $\mu_l^4\big(\{(x,0)\}\big)=\alpha_k^{n+1}$ for some $k\in P_{n+1}$ and every $l>n+1$. It follows that $(x,0)\in L\big(\mu_n^4\big)$. 
    (i) and (ii) are thus proved.

    (iii) follows from (ii).
\end{proof}

Let us note here that we presented the constructions of the sequences in Proposition \ref{prop:square_fsjnseqs} in the square $[0,1]^2$ only for simplicity---similar constructions may be carried out also in the unit interval $[0,1]$ or, in fact, any uncountable metric compact space. Note also that the constructed examples satisfy only 4 out of 15 possible relations between the sets $\emptyset$, $L$, $LI$, $LS$, and $S$, however the lacking 11 can be obtained in a similar elementary way.


\medskip

The next lemma shows that the value $1/2$ in the property (iii) of $\seqn{\mu_n^3}$ is not accidental. An intuitive meaning of the lemma is that if for some fixed points of the space $X$ the absolute values of measures of the corresponding singletons grow too much, then they must be nullified by the values on some other points which lie closer and closer to these fixed ones (in the sense of the topology of $X$), cf. also Lemma \ref{lemma:fsjnseq_liminf_s_ls_infinite}. The property (iv) of $\seqn{\mu_n^2}$ implies that we cannot relax here limits to inferior limits or superior limits.

\begin{lemma}\label{lemma:fsjn_sum_lim_12}
For every JN-sequence $\seqn{\mu_n}$ on a space $X$ it holds:
\[\sum_{x\in L(\mu_n)}\lim_{n\to\infty}\big|\mu_n(\{x\})\big|\le 1/2.\]
\end{lemma}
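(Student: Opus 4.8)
The plan is to bound the sum over an arbitrary finite set $F\sub L\big(\mu_n\big)$ by $1/2$ and then let $F$ exhaust $L\big(\mu_n\big)$; since all summands are nonnegative, the full sum is the supremum of these finite sums. Throughout write $\alpha_x=\lim_{n\to\infty}\mu_n(\{x\})$ for $x\in L\big(\mu_n\big)$, which exists and is nonzero by the very definition of $L\big(\mu_n\big)$, so I work directly with the given sequence and need no passage to a subsequence.

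So fix a finite $F\sub L\big(\mu_n\big)$. Using that $X$ is Tychonoff, I would first choose a single test function $f\in C(X,[-1,1])$ with $f(x)=\sgn(\alpha_x)$ for every $x\in F$: separate the finitely many distinct points of $F$ by continuous functions vanishing off each point, combine them with the prescribed signs, and truncate into $[-1,1]$ (harmless, as the prescribed values are $\pm1$). Splitting the integral against $\mu_n$ into the contribution of $F$ and the rest,
\[\mu_n(f)=\sum_{x\in F}\sgn(\alpha_x)\,\mu_n(\{x\})+\sum_{y\in\supp(\mu_n)\sm F}f(y)\,\mu_n(\{y\}),\]
where $\mu_n(\{x\})$ is read as $0$ when $x\notin\supp(\mu_n)$.

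Now I let $n\to\infty$. The left-hand side tends to $0$ because $\seqn{\mu_n}$ is a JN-sequence, and the finite sum over $F$ tends to $\sum_{x\in F}\sgn(\alpha_x)\alpha_x=\sum_{x\in F}|\alpha_x|$; hence the tail converges, with limit $-\sum_{x\in F}|\alpha_x|$. On the other hand, since $|f|\le1$ and $\|\mu_n\|=1$, the tail is also controlled by the mass living off $F$:
\[\Big|\sum_{y\in\supp(\mu_n)\sm F}f(y)\,\mu_n(\{y\})\Big|\le\sum_{y\in\supp(\mu_n)\sm F}\big|\mu_n(\{y\})\big|=1-\sum_{x\in F}\big|\mu_n(\{x\})\big|,\]
and the right-hand side converges to $1-\sum_{x\in F}|\alpha_x|$ as $n\to\infty$. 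Comparing the two limits gives the self-referential inequality $\sum_{x\in F}|\alpha_x|\le1-\sum_{x\in F}|\alpha_x|$, that is $\sum_{x\in F}|\alpha_x|\le1/2$, and taking the supremum over finite $F\sub L\big(\mu_n\big)$ finishes the proof.

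The conceptual heart---and the only place where the sharp constant $1/2$ is forced rather than the trivial bound $1$ coming from Remark \ref{remark:jn_pointwise_limits}---is this last step: weak* nullity demands that the persistent mass $\sum_{x\in F}|\alpha_x|$ on $F$ be cancelled in the limit by the mass outside $F$, whose total variation is exactly $1-\sum_{x\in F}|\mu_n(\{x\})|$, producing $s\le1-s$ instead of a one-sided estimate. I expect no real obstacle; the only point needing care is that a single $f$ with $\|f\|_\infty\le1$ can simultaneously record all the signs $\sgn(\alpha_x)$ on the finite set $F$, which is precisely what Tychonoff separation supplies.
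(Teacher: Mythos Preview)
Your proof is correct and follows essentially the same approach as the paper: choose a finite $F\sub L(\mu_n)$, test against a single $f\in C(X,[-1,1])$ with $f(x)=\sgn(\alpha_x)$ on $F$, split $\mu_n(f)$ into the $F$-part and the remainder, and compare the persistent mass on $F$ with the mass off $F$. The only difference is organizational---the paper argues by contradiction with explicit $\eps$-estimates, while you pass directly to limits to obtain $s\le 1-s$; your version is arguably cleaner.
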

\begin{proof}
Let $\seqn{\mu_n}$ be a JN-sequence on a space $X$. For $x\in L(\mu_n)$ we denote $\lim_{n\to\infty}\mu_n(\{x\})$ by $\mu(x)$.

For the sake of contradiction, assume that $\sum_{x\in L(\mu_n)}\big|\mu(x)\big|>1/2$, 
so there is a finite set $F\sub L\big(\mu_n\big)$ such that $\sum_{x\in F}\big|\mu(x)\big|>1/2$. 
Denote the latter sum by $\alpha$, so $\alpha>1/2$. Let $\eps=\big(\alpha-1/2\big)/2$, so $\alpha=2\eps+1/2$. Let $N\io$ be such that, for every $x\in F$ and every $n>N$, we have:
\[\big|\mu_n(\{x\})-\mu(x)\big|<\eps/|F|.\]

Since $X$ is Tychonoff, we can find a function $f\in C(X,[-1,1])$ such that  $f(x)=\sgn(\mu(x))$ for every $x\in F$  (so $\|f\|_\infty\le1$). For every $n>N$ it holds:
\begin{eqnarray*}
\big|\big(\mu_n\rstr F\big)(f)\big| &=& \Big|\sum_{x\in F}\mu_n(\{x\})f(x) - \sum_{x\in F}\mu(x)f(x) + \sum_{x\in F}\mu(x)f(x)\big| \\ 
&\ge& 
\Big|\sum_{x\in F}\mu(x)f(x)\Big| - \Big|\sum_{x\in F}\big(\mu_n(\{x\}) - \mu(x)\big)f(x)\Big| \\
&\ge& \sum_{x\in F}\big|\mu(x)\big| - \sum_{x\in F}\big|\mu_n(\{x\}) - \mu(x)\big|\big|f(x)\big| \\
&>& \alpha-|F|\cdot\eps/|F|\cdot \|f\|_\infty = \alpha-\eps=\eps+1/2.
\end{eqnarray*}
A similar argument also shows that $\big\|\mu_n\rstr F\big\|>\eps+1/2$, so $\big\|\mu_n\rstr(X\sm F)\big\|<1/2-\eps$.

It follows that for every $n>N$ we have:
\begin{eqnarray*}
\big|\mu_n(f)\big|&=& \big|\big(\mu_n\rstr F\big)(f)+\big(\mu_n\rstr(X\sm F)\big)(f)\big| \ge \big|\big(\mu_n\rstr F\big)(f)\big|-\big|\big(\mu_n\rstr(X\sm F)\big)(f)\big| \\
&>& \eps+1/2 - \|f\|_\infty\cdot\big\|\mu_n\rstr(X\sm F)\big\| > \eps+1/2-1\cdot(1/2-\eps)=2\eps>0,
\end{eqnarray*}

so $\limsup_{n\to\infty}\big|\mu_n(f)\big|>2\eps>0$, which is a contradiction.
\end{proof}



\begin{lemma}\label{lemma:fsjnseq_liminf_s_ls_infinite}
For every pointwise convergent JN-sequence $\seqn{\mu_n}$ on a space $X$, if $\liminf_{k\to\infty}\big\|\mu_k\rstr L\big(\mu_n\big)\big\|<1$, then the set $S\big(\mu_n\big)\sm L\big(\mu_n\big)$ is infinite.
\end{lemma}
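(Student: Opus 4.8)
The plan is to prove the contrapositive: assuming that $S\big(\mu_n\big)\sm L\big(\mu_n\big)$ is finite, I will show that $\big\|\mu_k\rstr L\big(\mu_n\big)\big\|\to 1$ as $k\to\infty$, which in particular forces $\liminf_{k\to\infty}\big\|\mu_k\rstr L\big(\mu_n\big)\big\|=1$, contradicting the hypothesis $\liminf_{k\to\infty}\big\|\mu_k\rstr L\big(\mu_n\big)\big\|<1$.

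Write $L=L\big(\mu_n\big)$ and $S=S\big(\mu_n\big)$, and put $F=S\sm L$, which is finite by assumption. The key structural observation is that $\supp\big(\mu_k\big)\sub S=L\cup F$ for every $k\io$, and that $L$ and $F$ are disjoint; hence the variation splits additively as
\[\big\|\mu_k\big\|=\big\|\mu_k\rstr L\big\|+\big\|\mu_k\rstr F\big\|.\]
Since $\seqn{\mu_n}$ is a JN-sequence we have $\big\|\mu_k\big\|=1$, and so $\big\|\mu_k\rstr L\big\|=1-\big\|\mu_k\rstr F\big\|$ for every $k\io$. Thus everything reduces to showing that $\big\|\mu_k\rstr F\big\|\to 0$.

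This last step is where pointwise convergence is essential. For each $x\in F=S\sm L$, the limit $\lim_{k\to\infty}\mu_k(\{x\})$ exists by pointwise convergence, and since $x\notin L$ this limit must equal $0$. As $F$ is finite, summing these finitely many null sequences gives
\[\big\|\mu_k\rstr F\big\|\le\sum_{x\in F}\big|\mu_k(\{x\})\big|\to 0\quad\text{as }k\to\infty,\]
and substituting into the displayed identity yields $\big\|\mu_k\rstr L\big\|\to 1$, which completes the contrapositive. The argument is elementary, and there is no serious obstacle; the one point to watch is that it genuinely uses pointwise convergence (rather than the weaker fact $x\notin LS$) to conclude $\mu_k(\{x\})\to 0$ for $x\in S\sm L$. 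Without this hypothesis a point could lie in $S\sm L$ while $\big|\mu_k(\{x\})\big|$ fails to converge to $0$, and then the finite sum bounding $\big\|\mu_k\rstr F\big\|$ need not vanish.
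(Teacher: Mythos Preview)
Your proof is correct and follows essentially the same approach as the paper's: both argue by contradiction (assuming $S\sm L$ finite), use pointwise convergence to show $\big\|\mu_k\rstr(S\sm L)\big\|\to 0$, and combine this with the norm decomposition $1=\big\|\mu_k\rstr L\big\|+\big\|\mu_k\rstr(S\sm L)\big\|$. Your version is in fact slightly cleaner, since you obtain convergence of $\big\|\mu_k\rstr L\big\|$ to $1$ for the whole sequence directly, whereas the paper passes to a subsequence realizing the $\liminf$ and carries out an explicit $\varepsilon$-estimate.
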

\begin{proof}
Let $\seqk{\mu_{n_k}}$ be such a subsequence that $\lim_{k\to\infty}\big\|\mu_{n_k}\rstr L\big\|=\alpha$, where $\alpha<1$. There is $K\io$ such that for every $k>K$ we have:
\[\Big|\big\|\mu_{n_k}\rstr L\big\|-\alpha\Big|<(1-\alpha)/2,\]
so
\[\big\|\mu_{n_k}\rstr L\big\|-\alpha/2<1/2.\]
Since $\seqk{\mu_{n_k}}$ is pointwise convergent, $\lim_{k\to\infty}\mu_{n_k}(\{x\})=0$ for every $x\in S\sm L$, so if $S\sm L$ is finite, then there is $K'>K$ such that for every $k>K'$ we have: 
\[\big\|\mu_{n_k}\rstr(S\sm L)\big\|<(1-\alpha)/2,\]
so 
\[\big\|\mu_{n_k}\rstr(S\sm L)\big\|+\alpha/2<1/2,\] 
but then for every $k>K'$ we also have:
\[1=\big\|\mu_{n_k}\big\|=\Big(\big\|\mu_{n_k}\rstr(S\sm L)\big\|+\alpha/2\Big)+\Big(\big\|\mu_{n_k}\rstr L\big\|-\alpha/2\Big)<1/2+1/2=1,\]
a contradiction.
\end{proof}

Note that Proposition \ref{prop:square_fsjnseqs}.(4) provides an example of a JN-sequence for which both the assumption as well as the conclusion stated in the above lemma do not hold. 

The following lemma asserts an interesting and useful property of the subspace $S\big(\mu_n\big)$ for a given JN-sequence $\seqn{\mu_n}$. Its proof is contained in the proof of \cite[Theorem 1]{BKS19}, so we skip it; another proof was also given in \cite[Proposition 4.1]{KMSZ}.

\begin{lemma}\label{lemma:fsjn_f_bounded}
Let $\seqn{\mu_n}$ be a JN-sequence on a Tychonoff space $X$. Then, every function $f\in C(X)$ is bounded on the subspace $\ol{S\big(\mu_n\big)}^X$.\noproof
\end{lemma}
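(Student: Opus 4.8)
The plan is to argue by contradiction and to build, by a gliding-hump construction, a single function $g\in C(X)$ for which $\big(\mu_n(g)\big)_{n}$ does not converge to $0$. First I would reduce the claim to boundedness on $S=S\big(\mu_n\big)$ itself: since $S$ is dense in $\ol{S}^X$ and $f$ is continuous, $\sup_{\ol{S}^X}|f|=\sup_{S}|f|$, so it suffices to bound $f$ on $S$. Replacing $f$ by $|f|$, I may assume $f\ge 0$. Suppose towards a contradiction that $\sup_{x\in S}f(x)=\infty$. Because every single support $\supp\big(\mu_n\big)$ is finite, $f$ is automatically bounded on it; hence unboundedness on the union forces the following useful form: for every $R>0$ and every $N\io$ there are $n>N$ and $x\in\supp\big(\mu_n\big)$ with $f(x)>R$. (Otherwise $f$ would be bounded by a fixed constant on the finitely many supports indexed by $n\le N$, and by $R$ on all the others.)

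Now I would construct inductively indices $n_1<n_2<\cdots$, points $x_k\in\supp\big(\mu_{n_k}\big)$ with $f(x_k)\to\infty$, and auxiliary functions $\theta_k\in C(X)$, setting $g=\sum_k\theta_k$. Each $\theta_k$ should single out the point $x_k$ \emph{within its own measure}: writing $\alpha_k=\mu_{n_k}(\{x_k\})\neq0$, I put
\[\theta_k=\frac{\sgn(\alpha_k)}{|\alpha_k|}\cdot\big(\varphi_k\circ f\big)\cdot\sigma_k,\]
where $\varphi_k\in C(\R,[0,1])$ is a bump supported in a bounded interval $(a_k,b_k)$ around $f(x_k)$ with $\varphi_k\big(f(x_k)\big)=1$, and $\sigma_k\in C(X,[0,1])$ satisfies $\sigma_k(x_k)=1$ and $\sigma_k\rstr\big(\supp\big(\mu_{n_k}\big)\sm\{x_k\}\big)=0$ (such $\sigma_k$ exists by complete regularity, separating $x_k$ from a finite set). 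Then $\theta_k$ vanishes on $\supp\big(\mu_{n_k}\big)\sm\{x_k\}$, while $\mu_{n_k}(\theta_k)=\alpha_k\cdot\sgn(\alpha_k)/|\alpha_k|=1$; moreover the factor $\varphi_k\circ f$ confines $\theta_k$ to the ``$f$-slab'' $\{x:\ f(x)\in(a_k,b_k)\}$.

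The inductive bookkeeping has two jobs. First, I choose the slabs $(a_k,b_k)$ pairwise disjoint and marching to $+\infty$, with $a_k$ larger than $b_{k-1}$ and larger than $\max\big\{f(x):x\in\supp\big(\mu_{n_i}\big),\ i<k\big\}$; this guarantees that $\theta_j$ vanishes on $\supp\big(\mu_{n_k}\big)$ whenever $j>k$, and---crucially---that $g=\sum_k\theta_k$ is continuous, since any point has a neighbourhood on which $f$ varies by less than $1$ and which therefore meets only finitely many slabs, so $g$ is locally a finite sum. Second, having fixed $g_{k-1}=\sum_{j<k}\theta_j$ (a bounded continuous function), I use that $\seqn{\mu_n}$ is weak* null to pick $n_k$ so large that $\big|\mu_{n_k}(g_{k-1})\big|<1/2$, while simultaneously, by the consequence of unboundedness noted above, demanding that $\mu_{n_k}$ carry a support point $x_k$ with $f(x_k)$ above the current threshold. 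The first requirement holds for all sufficiently large indices and the second for infinitely many, so a suitable $n_k>n_{k-1}$ exists.

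Putting the pieces together, for each $k$ the contributions split as
\[\mu_{n_k}(g)=\mu_{n_k}(\theta_k)+\mu_{n_k}(g_{k-1})+\sum_{j>k}\mu_{n_k}(\theta_j)=1+\mu_{n_k}(g_{k-1})+0,\]
whence $\big|\mu_{n_k}(g)\big|\ge 1-1/2=1/2$ for every $k$. This contradicts $\mu_n(g)\to0$, which holds because $g\in C(X)$ and $\seqn{\mu_n}$ is a JN-sequence. I expect the main obstacle to be exactly the interplay flagged here: unboundedness of $f$ on $S$ does not by itself produce a single large ``atom'', since several support points may share one $f$-value and cancel, so a plain push-forward of the $\mu_n$ along $f$ to $\R$ would fail; the remedy is to peel off $x_k$ from the rest of its finite support via the Tychonoff factor $\sigma_k$, and then to secure continuity of the infinite sum $g$ through the disjoint $f$-slabs, which render it locally finite.
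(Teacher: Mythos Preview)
The paper does not give its own proof of this lemma; it refers instead to \cite[Theorem~1]{BKS19} and \cite[Proposition~4.1]{KMSZ}. So there is no in-paper argument to compare against.

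Your gliding-hump construction is correct. The key points all check: unboundedness of $f$ on $S$ yields, for every threshold $R$ and every $N$, some $n>N$ with a point of $\supp(\mu_n)$ where $f>R$ (hence infinitely many such $n$, by iteration); the function $\theta_k$ satisfies $\mu_{n_k}(\theta_k)=1$ while vanishing on the remaining support points of $\mu_{n_k}$; the choice $a_k>\max\big\{f(x):x\in\supp(\mu_{n_i}),\ i<k\big\}$ forces $\theta_j\rstr\supp(\mu_{n_k})\equiv0$ for all $j>k$, and since $a_k>f(x_{k-1})\to\infty$ the $f$-slabs are locally finite in $\R$, so $g=\sum_k\theta_k$ is continuous; finally, the two constraints on $n_k$ (one holding for cofinitely many $n$, the other for infinitely many) are simultaneously satisfiable. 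The resulting contradiction $|\mu_{n_k}(g)|\ge 1/2$ is exactly as you state.
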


\begin{corollary}
If a normal space $X$ admits a JN-sequence $\seqn{\mu_n}$, then the subspace $\ol{S\big(\mu_n\big)}^X$ is pseudocompact.
\end{corollary}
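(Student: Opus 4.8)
The plan is to read off pseudocompactness directly from Lemma \ref{lemma:fsjn_f_bounded}, using the normality of $X$ to bridge the gap between functions continuous on all of $X$ and functions continuous only on the subspace. Write $Y=\ol{S\big(\mu_n\big)}^X$. Recall that $Y$ is \emph{pseudocompact} precisely when every $g\in C(Y)$ is bounded, so the task is to control \emph{arbitrary} real-valued continuous functions on $Y$, whereas Lemma \ref{lemma:fsjn_f_bounded} only controls restrictions to $Y$ of functions coming from $C(X)$.

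First I would note that $Y$, being a closure, is a closed subset of $X$, and $X$ is assumed normal. The decisive step is then the \emph{full} Tietze extension theorem: over a normal space every real-valued (not necessarily bounded) continuous function on a closed subset extends continuously to the whole space; equivalently, $Y$ is $C$-embedded in $X$. Granting this, the argument closes in one line: given any $g\in C(Y)$, choose an extension $\wh g\in C(X)$ with $\wh g\rstr Y=g$; by Lemma \ref{lemma:fsjn_f_bounded} the function $\wh g$ is bounded on $Y$, and hence so is $g$. As $g$ was arbitrary, $Y$ is pseudocompact.

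The only point requiring care — and the step I expect a referee to want spelled out — is the extension of \emph{unbounded} continuous functions, since Urysohn's lemma and the elementary form of Tietze's theorem deliver only extensions of bounded functions. I would handle it in the standard way: replace $g$ by the bounded function $h=g/(1+|g|)\in C(Y,[-1,1])$, extend $h$ to some $H\in C(X,[-1,1])$ by the bounded Tietze theorem, and observe that $Z=H^{-1}(\{-1,1\})$ is closed in $X$ and disjoint from $Y$, because $h$ takes values in the open interval $(-1,1)$. Using normality once more, pick a Urysohn function $\varphi\in C(X,[0,1])$ with $\varphi\rstr Y\equiv1$ and $\varphi\rstr Z\equiv0$; then $\varphi\cdot H$ still extends $h$, now with values in $(-1,1)$, so $(\varphi H)/\big(1-|\varphi H|\big)$ is a well-defined continuous function on all of $X$ restricting to $g$ on $Y$. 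With this extension in hand the one-line argument of the previous paragraph applies and completes the proof.
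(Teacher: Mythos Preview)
Your proof is correct and follows exactly the same route as the paper's: extend an arbitrary $g\in C(Y)$ to $\wh g\in C(X)$ via Tietze and then invoke Lemma \ref{lemma:fsjn_f_bounded}. The only difference is that you spell out the standard reduction from the unbounded to the bounded Tietze theorem, which the paper simply takes for granted.
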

\begin{proof}
Put $S=\ol{S\big(\mu_n\big)}^X$. Let $f\in C(S)$. By the Tietze extension theorem there is $F\in C(X)$ extending $f$. By Lemma \ref{lemma:fsjn_f_bounded}, $f=F\rstr S$ is bounded.
\end{proof}

The following criterion for a sequence of measures to be a JN-sequence will be useful in the sequel.

\begin{lemma}\label{lemma:norm_conv_fsjn_seqs}
Let $\seqn{\mu_n}$ and $\seqn{\nu_n}$ be two finitely supported sequences of measures on a space $X$ such that $\lim_{n\to\infty}\big\|\mu_n-\nu_n\big\|=0$. Assume that $\seqn{\mu_n}$ is a JN-sequence on $X$, $\big\|\nu_n\big\|=1 $ for every $n\io$, and that every function $f\in C(X)$ is bounded on $S\big(\nu_n\big)$. Then, $\seqn{\nu_n}$ is also a JN-sequence on $X$.
\end{lemma}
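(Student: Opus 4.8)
The plan is to verify the two clauses of Definition \ref{def:fsjnseq} for the sequence $\seqn{\nu_n}$. The normalisation $\big\|\nu_n\big\|=1$ is assumed outright, so the entire content of the lemma is to show that $\nu_n(f)\to0$ for every $f\in C(X)$. I would fix such an $f$ and compare $\nu_n(f)$ with $\mu_n(f)$ by the triangle inequality,
\[\big|\nu_n(f)\big|\le\big|\mu_n(f)\big|+\big|(\nu_n-\mu_n)(f)\big|.\]
The first summand tends to $0$ because $\seqn{\mu_n}$ is assumed to be a JN-sequence, so everything reduces to proving that the second summand tends to $0$ as well.

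The key estimate is the elementary bound for finitely supported measures: writing the finitely supported measure $\nu_n-\mu_n$ as $\sum_x\alpha_x\delta_x$, we have
\[\big|(\nu_n-\mu_n)(f)\big|=\Big|\sum_x\alpha_x f(x)\Big|\le\big\|\nu_n-\mu_n\big\|\cdot\sup\big\{|f(x)|\colon x\in\supp(\nu_n-\mu_n)\big\}.\]
Since $\supp(\nu_n-\mu_n)\sub\supp(\nu_n)\cup\supp(\mu_n)\sub S\big(\nu_n\big)\cup S\big(\mu_n\big)$, the supremum above is bounded by the single constant $M_f:=\sup\big\{|f(x)|\colon x\in S\big(\nu_n\big)\cup S\big(\mu_n\big)\big\}$, which is independent of $n$. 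This constant is finite: $f$ is bounded on $S\big(\mu_n\big)$ by Lemma \ref{lemma:fsjn_f_bounded} applied to the JN-sequence $\seqn{\mu_n}$, and $f$ is bounded on $S\big(\nu_n\big)$ by the standing hypothesis. Hence $\big|(\nu_n-\mu_n)(f)\big|\le M_f\cdot\big\|\nu_n-\mu_n\big\|\to0$ by the assumption $\big\|\mu_n-\nu_n\big\|\to0$, and combining the two estimates gives $\nu_n(f)\to0$, as required.

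There is no genuinely hard step here; the proof is a one-line triangle-inequality argument. The only point that requires attention is recognising the precise role of the two boundedness hypotheses: they are exactly what is needed to control $\sup_{x\in\supp(\nu_n-\mu_n)}|f(x)|$ \emph{uniformly in} $n$, so that the vanishing of $\big\|\mu_n-\nu_n\big\|$ is enough to force $(\nu_n-\mu_n)(f)\to0$. Boundedness of $f$ on $S\big(\mu_n\big)$ comes for free from Lemma \ref{lemma:fsjn_f_bounded}, whereas for the a priori unknown sequence $\seqn{\nu_n}$ no such automatic boundedness is available, and that is precisely why the corresponding boundedness on $S\big(\nu_n\big)$ must be assumed explicitly in the statement.
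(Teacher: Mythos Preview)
Your proof is correct and essentially identical to the paper's own argument: both use the triangle inequality $\big|\nu_n(f)\big|\le\big|\mu_n(f)\big|+\big|(\nu_n-\mu_n)(f)\big|$ and bound the second term by $\big\|\mu_n-\nu_n\big\|$ times a uniform supremum of $|f|$ over $S\big(\mu_n\big)\cup S\big(\nu_n\big)$, invoking Lemma~\ref{lemma:fsjn_f_bounded} for the first set and the hypothesis for the second. The paper writes this supremum as $\max(\alpha,\beta)$ with $\alpha,\beta$ the suprema over the two pieces separately, which is exactly your $M_f$.
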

\begin{proof}
It is only necessary to prove that $\lim_{n\to\infty}\nu_n(f)=0$ for every $f\in C(X)$. Let thus $f\in C(X)$ and put $\alpha=\sup\big\{|f(x)|\colon\ x\in S\big(\mu_n\big)\big\}$ and $\beta=\sup\big\{|f(x)|\colon\ x\in S\big(\nu_n\big)\big\}$. By Lemma \ref{lemma:fsjn_f_bounded} the function $f$ is bounded on $S\big(\mu_n\big)$, so $\alpha<\infty$. Similarly, $\beta<\infty$ by the assumption. We then have:
\[\big|\nu_n(f)\big|\le\big|\mu_n(f)-\nu_n(f)\big|+\big|\mu_n(f)\big|\le\max(\alpha,\beta)\cdot\big\|\mu_n-\nu_n\big\|+\big|\mu_n(f)\big|,\]
so $\lim_{n\to\infty}\nu_n(f)=0$. It follows that $\seqn{\nu_n}$ is a JN-sequence on $X$.
\end{proof}

Combining Lemmas \ref{lemma:jnseq_pos_neg} and \ref{lemma:norm_conv_fsjn_seqs} we easily get the following corollary.

\begin{corollary}
If $\seqn{\mu_n}$ is a JN-sequence on a space $X$, then there are a subsequence $\seqk{\mu_{n_k}}$ and a JN-sequence $\seqk{\nu_k}$ on $X$ such that $\supp\big(\nu_k\big)=\supp\big(\mu_{n_k}\big)$ and
\[\big\|\nu_k\rstr P_k\big\|=\big\|\nu_k\rstr N_k\big\|=1/2\]
for every $k\io$, where the sets $P_k$'s and $N_k$'s are defined for $\seqk{\nu_k}$ similarly as in Lemma \ref{lemma:jnseq_pos_neg}.

\noproof
\end{corollary}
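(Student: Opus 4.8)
The plan is to rescale the positive and negative parts of each $\mu_n$ separately so that each has norm exactly $1/2$, and then to invoke Lemma \ref{lemma:norm_conv_fsjn_seqs} to confirm that the rescaled sequence is still a JN-sequence. First I would abbreviate $a_n=\big\|\mu_n\rstr P_n\big\|$ and $b_n=\big\|\mu_n\rstr N_n\big\|$. By Lemma \ref{lemma:jnseq_pos_neg} we have $a_n\to 1/2$ and $b_n\to 1/2$, so there is $N_0\io$ with $a_n,b_n>1/4$ for every $n>N_0$; in particular, from this index on both $P_n$ and $N_n$ are nonempty and the factors $(1/2)/a_n$ and $(1/2)/b_n$ are well-defined. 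Passing to the subsequence $\seqk{\mu_{n_k}}$ consisting of all indices $n>N_0$, I would set
\[\nu_k=\frac{1/2}{a_{n_k}}\cdot\big(\mu_{n_k}\rstr P_{n_k}\big)+\frac{1/2}{b_{n_k}}\cdot\big(\mu_{n_k}\rstr N_{n_k}\big).\]

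Because both rescaling factors are positive, the sign of every coefficient is preserved, so $\supp\big(\nu_k\big)=\supp\big(\mu_{n_k}\big)$, the positive and negative supports $P_k,N_k$ computed for $\nu_k$ coincide with $P_{n_k},N_{n_k}$, and by construction $\big\|\nu_k\rstr P_k\big\|=\big\|\nu_k\rstr N_k\big\|=1/2$, whence $\big\|\nu_k\big\|=1$. The key estimate is then a one-line computation: since the positive and negative parts have disjoint supports,
\[\big\|\mu_{n_k}-\nu_k\big\|=\Big|1-\tfrac{1/2}{a_{n_k}}\Big|\cdot a_{n_k}+\Big|1-\tfrac{1/2}{b_{n_k}}\Big|\cdot b_{n_k}=\big|a_{n_k}-1/2\big|+\big|b_{n_k}-1/2\big|,\]
which tends to $0$ as $k\to\infty$ by the choice of the subsequence.

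Finally, I would check the remaining hypotheses of Lemma \ref{lemma:norm_conv_fsjn_seqs}, applied to the JN-sequence $\seqk{\mu_{n_k}}$ (a subsequence of a JN-sequence is again a JN-sequence, since the functional values still tend to $0$ and the norms remain $1$) and to $\seqk{\nu_k}$: we already have $\big\|\nu_k\big\|=1$ for every $k$, and because $S\big(\nu_k\big)=S\big(\mu_{n_k}\big)\sub S\big(\mu_n\big)$, Lemma \ref{lemma:fsjn_f_bounded} guarantees that every $f\in C(X)$ is bounded on $S\big(\nu_k\big)$. Lemma \ref{lemma:norm_conv_fsjn_seqs} then yields that $\seqk{\nu_k}$ is a JN-sequence with all the desired properties. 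There is no genuine obstacle here; the only point requiring care is to discard the initial terms (equivalently, to pass to a tail) so that the denominators $a_{n_k},b_{n_k}$ stay bounded away from $0$ and the rescaling is legitimate.
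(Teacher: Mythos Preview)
Your proof is correct and follows exactly the route the paper indicates (``Combining Lemmas \ref{lemma:jnseq_pos_neg} and \ref{lemma:norm_conv_fsjn_seqs}''): rescale the positive and negative parts so each has norm $1/2$, and then use the norm-approximation criterion together with Lemma \ref{lemma:fsjn_f_bounded} to conclude. The only detail you added beyond the paper's one-line hint is the explicit passage to a tail so that $a_{n_k},b_{n_k}$ are bounded away from $0$, which is precisely what is needed.
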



\section{Disjointly supported JN-sequences\label{section:fsjn_disjoint_supps}}

In this section we will show that if a compact space $K$ admits a JN-sequence, then $K$ carries a JN-sequence with disjoint supports (Theorem \ref{theorem:disjoint_supps}). Let us thus start with the following convenient definition.

\begin{definition}
A finitely supported sequence $\seqn{\mu_n}$ of measures on a space $X$ is \textit{disjointly supported} if $\supp\big(\mu_n\big)\cap\supp\big(\mu_{n'}\big)=\emptyset$ for every $n\neq n'\io$.
\end{definition}

%
%
The next lemma is an easy consequence of Kadec--Pe\l czy\'{n}ski--Rosenthal's Subsequence Splitting Lemma (cf. \cite[Lemma 5.2.8]{AK06}) and Schur's property of the Banach space $\ell_1$. However, for the sake of completeness we include a short elementary proof of it, directly stated in terms of measures. Recall that a sequence $\seqn{\mu_n}$ of Borel measures on a space $X$ \textit{converges in norm} to a Borel measure $\mu$ if $\big\|\mu_n-\mu\big\|=\big|\mu_n-\mu\big|(X)\to0$ as $n\to\infty$.

\begin{lemma}\label{lemma: KPR splitting lemma}
For every sequence $\seqn{\mu_n}$ of finitely supported measures on a space $X$ which is bounded, i.e. there is $M>0$ such that $\big\|\mu_n\big\|<M$ for every $n\io$, there exists a subsequence $\seqk{\mu_{n_k}}$ and a sequence $\seqk{A_k}$ of pairwise disjoint finite subsets of $S=S\big(\seqn{\mu_n}\big)$ such that the sequence $\seqk{\mu_{n_k}\rstr \big(S \setminus A_k\big)}$ converges in norm to a measure $\mu$ on $X$ such that $\mu=\sum_{x\in S}\alpha_x\cdot\delta_x$, for some $\alpha_x\in\R$ ($x\in S$), and $\|\mu\|\le M$.
\end{lemma}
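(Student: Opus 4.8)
The plan is to recast the finitely supported measures as elements of $\ell_1(S)$ via $\mu_n\leftrightarrow\big(\mu_n(\{x\})\big)_{x\in S}$, where $S=S\big(\seqn{\mu_n}\big)$ is countable, and then combine a diagonal argument producing the limit measure $\mu$ with a \emph{sliding-hump} construction that sweeps the escaping mass of the subsequence into pairwise disjoint finite windows. If $S$ is finite the statement is immediate from the Bolzano--Weierstrass theorem with all $A_k=\emptyset$, so I assume $S$ is infinite and fix an enumeration $S=\{s_j\colon j\io\}$.

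First I would pass to a subsequence along which the sequence converges pointwise on $S$. For each fixed $j$ the scalars $\mu_n(\{s_j\})$ lie in $[-M,M]$, so a standard diagonalisation yields a subsequence (reindexed, still written $\seqn{\mu_n}$) with $\lim_n\mu_n(\{s_j\})=\alpha_j$ for every $j\io$. Put $\mu=\sum_j\alpha_j\cdot\delta_{s_j}$. The bound $\|\mu\|\le M$ follows by testing on finite sets: for every finite $F\sub S$ one has $\sum_{s_j\in F}|\alpha_j|=\lim_n\|\mu_n\rstr F\|\le M$, so taking the supremum over $F$ shows that $\mu$ is a genuine finite (countably supported) measure of the required form with $\|\mu\|\le M$. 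Two consequences drive the rest of the argument: on every finite initial segment $\{s_0,\dots,s_{m-1}\}$ the restrictions converge in norm, i.e. $\|\mu_n\rstr\{s_0,\dots,s_{m-1}\}-\mu\rstr\{s_0,\dots,s_{m-1}\}\|\to0$; and since $\mu\in\ell_1(S)$ its tails vanish, $\|\mu\rstr\{s_j\colon j\ge m\}\|\to0$ as $m\to\infty$.

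The heart of the proof is an induction producing simultaneously a subsequence $\seqk{\mu_{n_k}}$ and integers $0=r_0<r_1<r_2<\cdots$. At stage $k$, having fixed $r_{k-1}$, I first choose $n_k>n_{k-1}$ so large (using the finite-segment norm convergence above) that $\|\mu_{n_k}\rstr\{s_0,\dots,s_{r_{k-1}-1}\}-\mu\rstr\{s_0,\dots,s_{r_{k-1}-1}\}\|<2^{-k}$, and then choose $r_k>r_{k-1}$ so large that $\supp(\mu_{n_k})\sub\{s_0,\dots,s_{r_k-1}\}$, which is possible because $\mu_{n_k}$ is finitely supported. I then set $A_k=\supp(\mu_{n_k})\cap\{s_j\colon j\ge r_{k-1}\}$. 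Since $\supp(\mu_{n_k})$ sits inside $\{s_0,\dots,s_{r_k-1}\}$, the set $A_k$ is contained in the window $\{s_j\colon r_{k-1}\le j<r_k\}$, and as these windows are pairwise disjoint the $A_k$ are pairwise disjoint finite subsets of $S$, as required. (Morally, the norm limit $\mu$ plays the role of the uniformly integrable part and the $\mu_{n_k}\rstr A_k$ the disjointly supported part in the Subsequence Splitting Lemma.)

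It remains to verify norm convergence of $\seqk{\mu_{n_k}\rstr(S\sm A_k)}$ to $\mu$. The key observation is that deleting $A_k$ from $\mu_{n_k}$ discards exactly the part of $\supp(\mu_{n_k})$ of index $\ge r_{k-1}$, so that $\mu_{n_k}\rstr(S\sm A_k)=\mu_{n_k}\rstr\{s_0,\dots,s_{r_{k-1}-1}\}$. Hence
\[\big\|\mu_{n_k}\rstr(S\sm A_k)-\mu\big\|\le\big\|\mu_{n_k}\rstr\{s_0,\dots,s_{r_{k-1}-1}\}-\mu\rstr\{s_0,\dots,s_{r_{k-1}-1}\}\big\|+\big\|\mu\rstr\{s_j\colon j\ge r_{k-1}\}\big\|,\]
where the first summand is below $2^{-k}$ by construction and the second tends to $0$ because $r_{k-1}\to\infty$ and $\mu\in\ell_1(S)$; thus the left-hand side tends to $0$. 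I expect the main obstacle to be the bookkeeping that forces the $A_k$ to be disjoint while still capturing all the escaping tail mass; this is resolved precisely by exploiting the finiteness of each $\supp(\mu_{n_k})$ to trap it inside a single window $\{s_j\colon r_{k-1}\le j<r_k\}$, so that the humps slide strictly to the right and never overlap.
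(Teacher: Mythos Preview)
Your argument is correct and follows essentially the same approach as the paper's proof: both pass to a pointwise convergent subsequence via diagonalisation to produce $\mu\in\ell_1(S)$, and then perform a sliding-hump induction in which an increasing family of finite sets (your initial segments $\{s_0,\dots,s_{r_{k-1}-1}\}$, the paper's sets $B_k\supseteq\bigcup_{j<k}A_j$) carries the part of $\mu_{n_k}$ close to $\mu$, while $A_k$ is defined as the remainder $\supp(\mu_{n_k})$ outside that set. The only cosmetic difference is that you fix an enumeration of $S$ and use initial segments, which makes the disjointness of the windows $[r_{k-1},r_k)$ automatic, whereas the paper chooses $B_k$ freely and secures disjointness by requiring $\bigcup_{j<k}A_j\sub B_k$; the resulting estimates are identical.
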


\begin{proof}
Since the sequence $\seqn{\mu_n}$ is bounded,  without loss of generality, we can assume that $\seqn{\mu_n}$ is pointwise convergent to a measure $\mu$ on $X$ such that $\mu=\sum_{x\in S}\alpha_x\cdot\delta_x$ with $\|\mu\|=\sum_{x\in S}\big|\alpha_x\big|\le M$, cf. Remark \ref{remark:jn_pointwise_limits}. By induction we will choose an increasing sequence $\seqn{n_k}$ and a sequence  of pairwise disjoint finite sets $A_k \subseteq S$ such that for every $k\io$ we have:
\[\tag{$*$}\big\|\mu - \big(\mu_{n_k}\rstr \big(S\setminus A_k\big)\big)\big\| < 1/(k+1),\]
which will mean that $\seqk{\mu_{n_k}\rstr \big(S\setminus A_k\big)}$ converges in norm to $\mu$.


Let $k\io$ and assume that we have constructed sequences $n_0,\ldots,n_{k-1}$ and $A_0,\ldots,A_{k-1}$ as required. At the stage $k$, we take a finite set $B_k\subseteq S$ such that $\bigcup_{j=0}^{k-1} A_j  \subseteq B_k$ and
\[\big\|\mu\rstr \big(S \setminus B_k\big)\big\|  < 1/(2k+2).\]
Using the pointwise convergence of  $\seqn{\mu_n}$ we can pick $n_k>n_{k-1}$ (where $n_{-1}=-1$) such that
\[\big\|\mu\rstr B_k - \mu_{n_k}\rstr B_k\big\| < 1/(2k+2).\]
Finally, for $A_{k} = \supp\big(\mu_{n_k}\big) \setminus B_k$ we have:
\begin{align*}
\big\|\mu-\big(\mu_{n_k}\rstr \big(S\setminus A_k\big)\big)\big\|&\le\big\|\mu\rstr\big(S\sm B_k\big)\big\|+\big\|\mu\rstr B_k-\mu_{n_k}\rstr B_k\big\|+\big\|\mu_{n_k}\rstr B_k-\mu_{n_k}\rstr\big(S\sm A_k\big)\big\|\\
&=\big\|\mu\rstr\big(S\sm B_k\big)\big\|+\big\|\mu\rstr B_k-\mu_{n_k}\rstr B_k\big\|+0<2/(2k+2)=1/(k+1),
\end{align*}
which gives ($*$) for $k+1$.
\end{proof}

From the above lemma we can derive the main result of this section.

\begin{theorem}\label{theorem:disjoint_supps}
Assume that a space $X$ carries a JN-sequence. Then, $X$ admits a disjointly supported JN-sequence.
\end{theorem}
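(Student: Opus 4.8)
The plan is to apply the Subsequence Splitting Lemma (Lemma \ref{lemma: KPR splitting lemma}) to the given JN-sequence and then extract a disjointly supported JN-sequence from the ``singular'' parts $\mu_{n_k}\rstr A_k$. Since $\|\mu_n\|=1$ for every $n$, the sequence $\seqn{\mu_n}$ is bounded with $M=1$, so Lemma \ref{lemma: KPR splitting lemma} yields a subsequence $\seqk{\mu_{n_k}}$ and pairwise disjoint finite sets $A_k\sub S=S(\mu_n)$ such that $Q_k:=\mu_{n_k}\rstr\big(S\sm A_k\big)$ converges in norm to a measure $\mu=\sum_{x\in S}\alpha_x\cdot\delta_x$; after refining as in Remark \ref{remark:jn_pointwise_limits} we may take $\alpha_x=\lim_k\mu_{n_k}(\{x\})$. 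Writing $P_k:=\mu_{n_k}\rstr A_k=\mu_{n_k}-Q_k$, the measures $P_k$ are automatically disjointly supported, which is exactly the feature we want to retain.

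First I would identify the weak* limit of $\seqk{P_k}$. Since $\seqk{\mu_{n_k}}$ is a subsequence of a JN-sequence, $\mu_{n_k}(f)\to0$ for every $f\in C(X)$. By Lemma \ref{lemma:fsjn_f_bounded} every $f\in C(X)$ is bounded on $S$, and both $Q_k$ and $\mu$ are supported on $S$; hence the norm convergence $Q_k\to\mu$ forces $Q_k(f)\to\mu(f)$. Therefore $P_k(f)=\mu_{n_k}(f)-Q_k(f)\to-\mu(f)$ for every $f\in C(X)$, i.e.\ $P_k\to-\mu$ in the weak* topology. The obstacle is now visible: the disjointly supported sequence $\seqk{P_k}$ need not be weak* null --- it is null precisely when $\mu=0$, which in general fails (cf.\ the examples with $L\neq\emptyset$ in Proposition \ref{prop:square_fsjnseqs}). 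This is the main difficulty of the argument.

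To cancel the nonzero limit $-\mu$ while preserving disjointness, I would pass to consecutive differences. Set $\sigma_k:=P_{2k}-P_{2k+1}$. Because $\supp(P_{2k})\sub A_{2k}$, $\supp(P_{2k+1})\sub A_{2k+1}$, and all the $A_j$ are pairwise disjoint, the supports $\supp(\sigma_k)\sub A_{2k}\cup A_{2k+1}$ remain pairwise disjoint. From $P_k\to-\mu$ weak* we get $\sigma_k\to-\mu-(-\mu)=0$ weak*, so $\sigma_k(f)\to0$ for every $f\in C(X)$. It then remains to normalise, and here the key quantitative point is that the norms $\|\sigma_k\|$ stay bounded away from $0$. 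By disjointness of supports $\|\sigma_k\|=\|P_{2k}\|+\|P_{2k+1}\|$, and since $\|P_k\|=\|\mu_{n_k}\|-\|Q_k\|=1-\|Q_k\|\to1-\|\mu\|$, we obtain $\|\sigma_k\|\to2\big(1-\|\mu\|\big)$. Crucially, Lemma \ref{lemma:fsjn_sum_lim_12} gives $\|\mu\|=\sum_x|\alpha_x|\le1/2$, whence $2\big(1-\|\mu\|\big)\ge1>0$.

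Discarding the finitely many indices for which $\sigma_k=0$ and putting $\nu_k:=\sigma_k/\|\sigma_k\|$, the sequence $\seqk{\nu_k}$ consists of finitely supported measures of norm $1$ with pairwise disjoint supports, and $\nu_k(f)=\sigma_k(f)/\|\sigma_k\|\to0$ for every $f\in C(X)$; thus $\seqk{\nu_k}$ is the desired disjointly supported JN-sequence. In summary, the two ingredients that overcome the obstacle of the ``leftover'' mass $\mu$ concentrated on the stable (limit) points are the difference trick---which respects disjointness precisely because the $A_j$ are pairwise disjoint---together with the bound $\|\mu\|\le1/2$ from Lemma \ref{lemma:fsjn_sum_lim_12}, which guarantees that the normalisation does not blow up.
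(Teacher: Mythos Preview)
Your argument is correct and follows essentially the same route as the paper: apply Lemma~\ref{lemma: KPR splitting lemma}, observe that the disjoint parts $P_k=\mu_{n_k}\rstr A_k$ converge weak* to $-\mu$, pass to consecutive differences $P_{2k}-P_{2k+1}$, and normalise. The only difference is in how the norm lower bound is secured: you invoke Lemma~\ref{lemma:fsjn_sum_lim_12} to obtain the quantitative estimate $\|\mu\|\le1/2$ (hence $\|\sigma_k\|\to2(1-\|\mu\|)\ge1$), whereas the paper argues directly by contradiction that $\|P_k\|>\varepsilon$ for some $\varepsilon>0$, without appealing to that lemma.
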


\begin{proof}
Let $\seqn{\mu_n}$ be a JN-sequence on a space $X$, and set $S = S\big(\mu_n\big)$. Let $\seqk{\mu_{n_k}}$, $\seqk{A_k}$, and $\mu$ be as in Lemma \ref{lemma: KPR splitting lemma}. For every $k\io$, we set $\nu^1_k=\mu_{n_k}\rstr A_k$ and $\nu^2_k=\mu_{n_k}\rstr \big(S \setminus A_k\big)$; so, $\mu_{n_k} = \nu^1_k + \nu^2_k$. Obviously, $\seqk{\nu^1_k}$ is disjointly supported.

Observe that for some $\varepsilon > 0$ we have $\big\|\nu^1_k\big\|> \varepsilon$ for all $k\in\omega$. Otherwise, we would have a subsequence $\seqi{\nu^1_{k_i}}$ converging in norm to the zero measure. Then, $\seqi{\mu_{n_{k_i}}}$ would converge in norm to $\mu$, so $\|\mu\| = 1$. For every $f\in C(X)$, $f\rstr S$ is a bounded function by Lemma \ref{lemma:fsjn_f_bounded}, so
\[\mu(f)=\sum_{x\in S}f(x)\mu(\{x\})=0,\]
since $\lim_{i\to\infty}\mu_{n_{k_i}}(f) = 0$ and
\begin{align*}
\big|\mu_{n_{k_i}}(f)-\mu(f)\big|&=\big|\sum_{x\in S}f(x)\big(\mu_{n_{k_i}}(\{x\})-\mu(\{x\})\big)\big|\le\|f\rstr S\|_\infty\cdot\sum_{x\in S}\big|\mu_{n_{k_i}}(\{x\})-\mu(\{x\})\big|\\
&=\|f\rstr S\|_\infty\cdot\big\|\mu_{n_{k_i}}-\mu\big\|\xrightarrow{i\to\infty}0.
\end{align*}
On the other hand, for a finite $A\subseteq S$ and $\delta>0$ such that $\|\mu\rstr A\|=1/2+\delta$ (recall that $\|\mu\|=1$), and a function $f\in C(X,[-1,1])$ such that $f(x)=\sgn(\mu(x))$ for every $x\in A$ (so $\|f\|_\infty=1$), we would have:
\begin{align*}
|\mu(f)|&=|\mu(f\rstr A)+\mu(f\rstr(S\sm A))|\ge|\mu(f\rstr A)|-|\mu(f\rstr(S\sm A))|\\
&=\|\mu\rstr A\|-|\mu(f\rstr(S\sm A))|\ge1/2+\delta-\|f\rstr S\|_\infty\cdot\|\mu\rstr(S\sm A)\|\\
&=1/2+\delta-1/2+\delta=2\delta > 0,
\end{align*}
a contradiction.

Since $\seqk{\mu_{n_k}}$ is a JN-sequence and $\seqk{\nu^2_k}$ converges in norm (and hence weak*) to $\mu$, $\seqk{\nu^1_k}$ is weak* convergent to $-\mu$. Let $\rho_k = \nu^1_{2k} - \nu^1_{2k+1}$ for $k\in \omega$. It follows that $\seqk{\rho_k}$ is weak* null. Clearly, the supports of $\rho_k$'s are pairwise disjoint. Since $\big\|\rho_k\big\| > 2\varepsilon$ for every $k\io$, the sequence $\seqk{\rho_k\big/\big\|\rho_k\big\|}$ is the desired disjointly supported JN-sequence.
\end{proof}

\section{JN-sequences with discrete union of supports\label{section:discrete}}

In this section we prove that if a given Tychonoff space $X$ admits a JN-sequence, then it admits one with disjoint supports whose union is discrete. 

The following lemma and corollary are a simple application of the triangle inequality.

\begin{lemma}\label{lem:subseq}
Let $\seqn{\mu_n}$ be a JN-sequence on a space $X$. Let $U_1,\ldots,U_m$ ($m\io$) be pairwise disjoint subsets of $X$. Then, there exists $1\le i\le m$ and a strictly increasing subsequence $\seqk{n_k}$ such that $\big|\mu_{n_k}\big|\big(U_i\big)\le 1/m$ for every $k\io$.\noproof
\end{lemma}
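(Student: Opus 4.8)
The plan is to exploit only two facts about the sequence: that each measure is normalized, $\big\|\mu_n\big\|=1$, and that $U_1,\ldots,U_m$ are pairwise disjoint (the weak* convergence of the JN-sequence plays no role here). Writing $\mu_n=\sum_{x\in\supp(\mu_n)}\alpha_x^n\cdot\delta_x$, the variation $\big|\mu_n\big|$ is the positive measure $\sum_x\big|\alpha_x^n\big|\cdot\delta_x$, so $\big|\mu_n\big|\big(U_i\big)=\sum_{x\in\supp(\mu_n)\cap U_i}\big|\alpha_x^n\big|$. Because the $U_i$ are disjoint, the sets $\supp(\mu_n)\cap U_i$ are disjoint subsets of $\supp(\mu_n)$, whence by additivity and monotonicity of the positive measure $\big|\mu_n\big|$,
\[\sum_{i=1}^m\big|\mu_n\big|\big(U_i\big)=\big|\mu_n\big|\Big(\bigcup_{i=1}^m U_i\Big)\le\big|\mu_n\big|(X)=\big\|\mu_n\big\|=1\]
for every $n\io$. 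This single displayed inequality is where the triangle inequality enters, in the guise of additivity of the $\ell_1$-type norm $\big\|\cdot\big\|$ over the disjoint pieces.

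First I would deduce, for each fixed $n\io$, the existence of an index $i(n)\in\{1,\ldots,m\}$ with $\big|\mu_n\big|\big(U_{i(n)}\big)\le 1/m$. Indeed, if all $m$ summands in the display above were strictly greater than $1/m$, their sum would exceed $1$, contradicting the inequality. Thus the assignment $n\mapsto i(n)$ is a well-defined map from $\omega$ into the finite set $\{1,\ldots,m\}$.

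Then I would invoke the pigeonhole principle: since $\omega$ is infinite and $\{1,\ldots,m\}$ is finite, there is a single index $i$ whose preimage $\big\{n\io\colon i(n)=i\big\}$ is infinite. Enumerating this preimage in increasing order as $\seqk{n_k}$ yields a strictly increasing subsequence with $\big|\mu_{n_k}\big|\big(U_i\big)\le 1/m$ for every $k\io$, which is exactly the desired conclusion.

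There is essentially no obstacle to overcome: the statement is a counting/averaging argument, and the only quantitative input is the normalization $\big\|\mu_n\big\|=1$ combined with the disjointness of the $U_i$. The only point deserving a moment's care is the first displayed inequality, which relies on $\big|\mu_n\big|$ being genuinely additive over disjoint sets (equivalently, that the weights $\big|\alpha_x^n\big|$ on the various $U_i$ are summed without overlap)—precisely the content of the disjointness hypothesis.
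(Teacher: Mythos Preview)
Your proof is correct and matches the paper's intended argument: the paper does not spell out a proof at all, merely remarking that the lemma ``is a simple application of the triangle inequality,'' which is exactly the averaging-plus-pigeonhole step you carry out.
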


\begin{corollary}\label{cor:subseq}
Let $\seqn{\mu_n}$ be a disjointly supported JN-sequence on a space $X$. For every $\eps>0$ there exist $n_0\io$, an open subset $U\sub X$, and a strictly increasing subsequence $\seq{n_k}{k\ge 1}$ such that
\begin{itemize}
	\item $\supp\big(\mu_{n_0}\big)\sub U$, 
	\item $n_1>n_0$, and
	\item $\big|\mu_{n_k}\big|(U)<\eps$ for every $k\ge 1$.\noproof
\end{itemize}
\end{corollary}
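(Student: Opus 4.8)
The plan is to derive the corollary from Lemma~\ref{lem:subseq} by a suitable choice of separating open sets. First I would fix $m\io$ with $1/m<\eps$ and look at the first $m$ members $\mu_0,\ldots,\mu_{m-1}$ of the sequence. Their supports $K_i=\supp\big(\mu_i\big)$, $i<m$, are finite and, because $\seqn{\mu_n}$ is disjointly supported, pairwise disjoint; so they form finitely many pairwise disjoint compact subsets of $X$.

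Since $X$ is Tychonoff, and hence Hausdorff, disjoint compact sets can be separated by disjoint open sets. A routine finite induction then produces pairwise disjoint open sets $U_0,\ldots,U_{m-1}$ with $K_i\sub U_i$ for every $i<m$: for each pair $i<j$ pick disjoint open $A_{ij}\supseteq K_i$ and $B_{ij}\supseteq K_j$, and set $U_i=\bigcap_{j>i}A_{ij}\cap\bigcap_{j<i}B_{ji}$, which is open, contains $K_i$, and is disjoint from every $U_j$ with $j\neq i$.

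Now I would apply Lemma~\ref{lem:subseq} to the JN-sequence $\seqn{\mu_n}$ together with the pairwise disjoint sets $U_0,\ldots,U_{m-1}$. This yields an index $i<m$ and a strictly increasing subsequence $\seqk{n_k}$ with $\big|\mu_{n_k}\big|\big(U_i\big)\le 1/m<\eps$ for every $k\io$. Putting $U=U_i$ and $n_0=i$, the inclusion $\supp\big(\mu_{n_0}\big)=K_i\sub U_i=U$ gives the first required condition. Finally, since the subsequence is strictly increasing, only finitely many of its terms satisfy $n_k\le n_0$; discarding those and re-enumerating the rest as $\seq{n_k}{k\ge1}$ yields $n_1>n_0$ while preserving $\big|\mu_{n_k}\big|(U)<\eps$, as required.

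The whole argument is essentially bookkeeping layered on Lemma~\ref{lem:subseq}, so I expect no genuine obstacle. The only points needing care are the use of disjointness of the supports, which is exactly what allows the finitely many supports to be enclosed in pairwise disjoint open sets, and the harmless truncation forcing $n_1>n_0$. (In fact $n_0$ can never occur among the $n_k$, since $\supp\big(\mu_{n_0}\big)\sub U_i$ forces $\big|\mu_{n_0}\big|\big(U_i\big)=\big\|\mu_{n_0}\big\|=1>1/m$.)
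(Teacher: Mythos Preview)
Your argument is correct and is precisely the routine verification the paper has in mind: the corollary is stated with no proof and is introduced together with Lemma~\ref{lem:subseq} as ``a simple application of the triangle inequality,'' so deriving it by separating the first $m$ (finite, pairwise disjoint) supports by disjoint open sets and invoking Lemma~\ref{lem:subseq} is exactly the intended route. The only cosmetic point is that your parenthetical remark that $n_0$ cannot occur among the $n_k$ tacitly uses $m\ge 2$; this is harmless since the truncation step already handles it, and for $\eps>1$ the statement is trivial anyway.
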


In order to prove the next lemma, which constitutes the core of the proof of Theorem \ref{theorem:discrete}, we need the following family of auxiliary functions: for each $a,b\in\Q$ such that $0\le a<b\le 1$ define the  continuous piecewise linear function $p_{a,b}\colon[0,1]\to[0,1]$ by the formula:
\[p_{a,b}(t)=
\begin{cases}
	0,&\text{ if }t\le a,\\
	\frac{t-a}{b-a},& \text{ if }t\in(a,b),\\
	1,&\text{ if }t\ge b,
\end{cases}\]
where $t\in [0,1]$.

\begin{lemma}\label{lem:main}
Let $X$ be a space and $\seqk{\nu_k}$ a disjointly supported JN-sequence on $X$. For every $k\io$ set $E_k=\supp\big(\nu_k\big)$. Let $\seqk{U_k}$ be a sequence of open subsets of $X$ such that $E_k\sub U_k$ for every $k\io$ and $\big|\nu_l\big|\big(U_0)<1/4$ for every $l>0$. Then, there exist a strictly increasing sequence $\seqi{k_i}$ with $k_0=0$ and a sequence $\seqi{g_i}$ of functions in $C(X,[0,1])$ such that for every $i\io$ the following conditions are satisfied:
	\[\tag{L.1}\bigcup_{j=0}^i E_{k_j}\sub\intt g_i^{-1}(0),\]
	\[\tag{L.2}g_i\rstr\Big(X\sm\bigcup_{j=0}^iU_{k_j}\Big)\equiv 1,\]
	\[\tag{L.3}g_i^{-1}(0)\sub g_{i+1}^{-1}(0)\quad\text{and}\quad g_{i+1}^{-1}(0)\sub g_i^{-1}\big[[0,1)\big]\cup U_{k_{i+1}},\]
	\[\tag{L.4}\text{the set }A_i=\big\{l\in A_{i-1}\sm\{k_i\}\colon\ \big|\nu_l\big|\big(g_i^{-1}\big[(0,1)\big]\big)<1/2^i\big\}\text{ is infinite (where }A_{-1}=\omega\text{)},\]
	\[\text{and }k_{i+1}=\min A_i.\]
\end{lemma}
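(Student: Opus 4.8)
The plan is to construct the pairs $(k_i,g_i)$ by induction on $i$, keeping each $g_i\in C(X,[0,1])$ and verifying (L.1)--(L.4) at every stage; note that the sequence $\seqi{k_i}$ is not free but is forced on us through $k_{i+1}=\min A_i$, so at each step the real work is to manufacture $g_{i+1}$. For the base case $k_0=0$, since $X$ is Tychonoff and $E_0$ is a finite subset of the open set $U_0$, I would first take a Urysohn-type $f_0\in C(X,[0,1])$ with $f_0\equiv 0$ on $E_0$ and $f_0\equiv 1$ on $X\sm U_0$ (as the minimum of finitely many functions separating the points of $E_0$ from the closed set $X\sm U_0$), and then set $g_0=p_{a,b}\circ f_0$ for some $0<a<b<1$. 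Composition with $p_{a,b}$ flattens $g_0$ to $0$ on the open set $\{f_0<a\}\supseteq E_0$ (giving (L.1)) while keeping $g_0\equiv 1$ off $U_0$ (giving (L.2)); crucially the transition region $g_0^{-1}\big[(0,1)\big]=f_0^{-1}\big((a,b)\big)$ lies inside $U_0$, so $\big|\nu_l\big|\big(g_0^{-1}[(0,1)]\big)\le\big|\nu_l\big|(U_0)<1/4<1/2^0$ for every $l>0$. Hence $A_0\supseteq\omega\sm\{0\}$ is infinite and $k_1=\min A_0$ is well defined.

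The inductive step is the heart of the argument, and I would build $g_{i+1}=\min\big(\tilde g,h\big)$ from two pieces: a \emph{sharpening} $\tilde g$ of $g_i$ and a \emph{localising} function $h$ for the new support $E_{k_{i+1}}$. For the sharpening, fix a partition $0=t_0<t_1<\cdots<t_m=1$ with $m\ge 2^{i+2}$; the open sets $g_i^{-1}\big((t_{j-1},t_j)\big)$ are pairwise disjoint, so Lemma \ref{lem:subseq} applied to the JN-sequence $\seq{\nu_l}{l\in A_i\sm\{k_{i+1}\}}$ produces a single slice $(c,d)=(t_{j-1},t_j)$ and an infinite $B\sub A_i\sm\{k_{i+1}\}$ with $\big|\nu_l\big|\big(g_i^{-1}((c,d))\big)\le 1/m\le 1/2^{i+2}$ for all $l\in B$. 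Put $\tilde g=p_{c,d}\circ g_i$, so $\tilde g^{-1}(0)=\{g_i\le c\}\supseteq g_i^{-1}(0)$ and $\tilde g^{-1}\big[(0,1)\big]=g_i^{-1}\big((c,d)\big)$ is thin. For $h$, take a Urysohn-type $f\in C(X,[0,1])$ with $f\equiv 0$ on $E_{k_{i+1}}$ and $f\equiv 1$ off $U_{k_{i+1}}$, partition $[0,1]$ finely again, and apply Lemma \ref{lem:subseq} to $\seq{\nu_l}{l\in B}$ to get $(c',d')$ and an infinite $B'\sub B$ with $\big|\nu_l\big|\big(f^{-1}((c',d'))\big)\le 1/2^{i+2}$ for $l\in B'$; set $h=p_{c',d'}\circ f$, whose transition region $f^{-1}\big((c',d')\big)$ is thin and contained in $U_{k_{i+1}}$.

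Verifying the conclusions for $g_{i+1}=\min(\tilde g,h)$ is then routine. It vanishes on the open set $\{g_i<c\}\cup\{f<c'\}$, which contains $\bigcup_{j=0}^{i+1}E_{k_j}$ (giving (L.1)); off $\bigcup_{j=0}^{i+1}U_{k_j}$ both $\tilde g$ and $h$ equal $1$ (giving (L.2)); and $g_{i+1}^{-1}(0)=\{g_i\le c\}\cup h^{-1}(0)$ both contains $g_i^{-1}(0)$ and sits inside $g_i^{-1}\big[[0,1)\big]\cup U_{k_{i+1}}$, since $c<1$ and $h^{-1}(0)\sub U_{k_{i+1}}$ (giving (L.3)). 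From the minimum one has $g_{i+1}^{-1}\big[(0,1)\big]\sub \tilde g^{-1}\big[(0,1)\big]\cup h^{-1}\big[(0,1)\big]$, whence for every $l\in B'$,
\[\big|\nu_l\big|\big(g_{i+1}^{-1}[(0,1)]\big)\le\big|\nu_l\big|\big(g_i^{-1}((c,d))\big)+\big|\nu_l\big|\big(f^{-1}((c',d'))\big)\le 1/2^{i+2}+1/2^{i+2}=1/2^{i+1},\]
so $A_{i+1}\supseteq B'$ is infinite and $k_{i+2}=\min A_{i+1}$ exists, closing the induction.

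The step I expect to be the main obstacle is precisely the mass bookkeeping in (L.4): the budget $1/2^{i+1}$ allowed on the new transition region is strictly \emph{smaller} than the $1/2^i$ guaranteed for $g_i$, so one cannot merely graft the transition around $E_{k_{i+1}}$ onto the old transition region, whose mass may be as large as $1/2^i$ for the relevant indices. The device that resolves this is to first \emph{shrink} the old transition — replacing $g_i$ by $p_{c,d}\circ g_i$ on a slice $(c,d)$ where infinitely many $\nu_l$ already carry less than half the budget — and only then to spend the remaining half on an equally thin transition around the new support; Lemma \ref{lem:subseq} is exactly the tool that simultaneously picks out the thin slice and passes to the infinite index set on which the estimate survives.
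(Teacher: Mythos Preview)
Your inductive scheme is essentially the paper's, with one reorganisation worth noting. The paper first forms $g_{i+1}'=\min\big(g_i,h_{k_{i+1}}\big)$ (where $h_{k_{i+1}}=p_{1/2,1}\circ f$ in your notation) and then applies Lemma~\ref{lem:subseq} \emph{once} to $g_{i+1}'$, setting $g_{i+1}=p_{c,d}\circ g_{i+1}'$ for the resulting thin slice. You instead slice $g_i$ and $f$ separately and only then take the minimum, so you invoke Lemma~\ref{lem:subseq} twice and split the budget as $1/2^{i+2}+1/2^{i+2}$. This is slightly longer but perfectly valid, and it makes the bookkeeping behind (L.4) more transparent; the paper's single slice is marginally more economical.

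There is, however, a small genuine gap in your verification of~(L.1) at the inductive step. The pigeonhole on $f$ may return the bottom slice, i.e.\ $c'=0$, and then $h^{-1}(0)=f^{-1}(0)$; but a bare Urysohn-type function only guarantees $f\equiv0$ on the finite set $E_{k_{i+1}}$, not that $E_{k_{i+1}}\sub\intt f^{-1}(0)$, so you cannot conclude $E_{k_{i+1}}\sub\intt g_{i+1}^{-1}(0)$ from ``$\{f<c'\}$ is open''. The fix is the device you already used in the base case (and which the paper bakes into its $h_k$'s from the start): replace $f$ by $p_{1/2,1}\circ f$ \emph{before} slicing, so that $E_{k_{i+1}}\sub\{f<1/2\}\sub\intt h^{-1}(0)$ regardless of which slice the pigeonhole selects. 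A second, cosmetic point: (L.4) requires the \emph{strict} inequality $<1/2^{i+1}$, whereas two applications of Lemma~\ref{lem:subseq} with exactly $m=2^{i+2}$ pieces give only $\le1/2^{i+1}$; take $m>2^{i+2}$ in each application (the paper uses $2^{i+1}+1$ pieces for its single slice for the same reason).
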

\begin{proof}
Since $X$ is Tychonoff, for every $k\io$ there exists $h_k'\in C(X,[0,1])$ such that $h_k'\rstr E_k\equiv 0$ and $h_k'\rstr\big(X\sm U_k\big)\equiv 1$. Set $h_k=p_{\frac{1}{2},1}\circ h_k'$. Obviously, $h_k\in C(X,[0,1])$, too, and
\[\tag{P.1}E_k\sub\intt h_k^{-1}(0)\quad\text{and}\quad h_k\rstr\big(X\sm U_k\big)\equiv 1.\]

Let $k_0=0$ and $g_0=h_0$---conditions (L.1) and (L.2) are trivially satisfied by (P.1). Set $A_{-1} = \omega$ (so $k_0=\min A_{-1}$). Since
\[g_0^{-1}\big[(0,1)\big]\sub U_0,\]
the assumption on $U_0$ implies that for every $l>0$ we have
\[\big|\nu_l\big|\big(g_0^{-1}\big[(0,1)\big]\big)<1/4,\]
that is, that $A_0=\{l>0\colon l\io\}$ and hence that $A_0$ is infinite.

Fix $i\io$ and let us assume that sequences $k_0<k_1<\ldots<k_i$, $g_0,g_1,\ldots,g_i$ and $A_0,A_1,\ldots,A_i$ satisfying conditions (L.1)--(L.4) have been constructed. Set
\[k_{i+1}=\min A_i\]
and
\[g_{i+1}'=\min\big(g_i,h_{k_{i+1}}\big).\]
Of course, $g_{i+1}'\in C(X,[0,1])$. From conditions (P.1), (L.1), and (L.2) we conclude that
\[\tag{P.2}\bigcup_{j=0}^{i+1}E_{k_j}\sub\intt\big(g_{i+1}'\big)^{-1}(0)\quad\text{and}\quad g_{i+1}'\rstr\Big(X\sm\bigcup_{j=0}^{i+1}U_{k_j}\Big)\equiv 1.\]
By Lemma \ref{lem:subseq}, there is $m\in\big\{0,1,\ldots,2^{i+1}\big\}$ such that the following set
\[B_{i+1}=\Big\{l>k_{i+1}\colon\ \big|\nu_l\big|\big(\big(g_{i+1}'\big)^{-1}\Big[\Big(\frac{m}{2^{i+1}+1},\frac{m+1}{2^{i+1}+1}\Big)\Big]\big)<1/2^{i+1}\Big\}\]
is infinite. We finally define:
\[g_{i+1}=p_{m/(2^{i+1}+1),(m+1)/(2^{i+1}+1)}\circ g_{i+1}'.\]
As always, $g_{i+1}\in C(X,[0,1])$. We also have:
\[g_{i+1}^{-1}[(0,1)]=\big(g_{i+1}'\big)^{-1}\Big[\Big(\frac{m}{2^{i+1}+1},\frac{m+1}{2^{i+1}+1}\Big)\Big],\]
so $A_{i+1}=B_{i+1}$, and hence $A_{i+1}$ is infinite. Condition (L.4) is thus satisfied for $i+1$ (with $k_{i+2}=\min A_{i+1}$). For $s\in\{0,1\}$ we have:
\[\big(g_{i+1}'\big)^{-1}(s)\sub g_{i+1}^{-1}(s),\]
so condition (P.2) implies conditions (L.1) and (L.2) for $i+1$. Also, since $g_{i+1}'\le g_i$ and so
\[g_i^{-1}(0)\sub\big(g_{i+1}'\big)^{-1}(0)\sub g_{i+1}^{-1}(0),\]
we get the first part of (L.3) for $i+1$. From condition (P.1) and the construction of $g_{i+1}$ we get that
\[g_{i+1}\rstr\Big(X\sm\big(g_i^{-1}\big[[0,1)\big]\cup U_{k_{i+1}}\big)\Big)\equiv 1,\]
and hence the second part of (L.3) holds for $i+1$, too. The induction is thus finished.
\end{proof}

(Note that in the proofs of the above three results we do not make any use of the assumption that $\seqn{\mu_n}$ converges to $0$ on continuous functions.)

We are in the position to prove the main theorem. Recall that by Theorem \ref{theorem:disjoint_supps} if a space admits a JN-sequence, then it carries a disjointly supported JN-sequence.

\begin{theorem}\label{theorem:discrete}
Let $X$ be a space and $\seqn{\mu_n}$ a disjointly supported JN-sequence on $X$. Then, there exist a disjointly supported JN-sequence $\seqi{\rho_i}$ on $X$ such that the union $\bigcup_{i\in\omega}\supp\big(\rho_i\big)$ is a discrete subset of $X$, and a subsequence $\seqi{n_i}$ such that $\supp\big(\rho_i\big)\sub\supp\big(\mu_{n_i}\big)$ for every $i\io$.
\end{theorem}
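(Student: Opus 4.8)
The plan is to feed a carefully prepared subsequence of $\langle\mu_n\rangle$ into Lemma \ref{lem:main} and then to delete from each selected measure the portion lying inside the previously built zero set. First I would prepare the data: iterating Corollary \ref{cor:subseq} along successive tails and diagonalising, I obtain a subsequence of $\langle\mu_n\rangle$, which I relabel as a disjointly supported JN-sequence $\langle\nu_k\rangle$ with $E_k=\supp(\nu_k)$, together with open sets $U_k\supseteq E_k$ and positive reals $\eps_k$ with $\sum_k\eps_k<1/4$, such that $|\nu_l|(U_k)<\eps_k$ for all $l>k$; this control of how much mass later measures place near earlier supports is what will later tame the deleted parts. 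Applying Lemma \ref{lem:main} to $\langle\nu_k\rangle$ and $\langle U_k\rangle$ produces a strictly increasing sequence $\langle k_i\rangle$ (with $k_0=0$) and functions $g_i\in C(X,[0,1])$ satisfying (L.1)--(L.4). Write $Z_i=g_i^{-1}(0)$ and $Z_{-1}=\emptyset$; by (L.3) these closed sets increase, by (L.1) we have $E_{k_i}\sub\intt Z_i$, by (L.2) we have $Z_i\sub\bigcup_{j\le i}U_{k_j}$, and, since $k_j\in A_i$ for $j>i$, the transition regions satisfy $|\nu_{k_j}|(g_i^{-1}[(0,1)])<1/2^i$.

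Next I would set $\rho_i=\nu_{k_i}\rstr(X\sm Z_{i-1})$ and $d_i=\nu_{k_i}\rstr Z_{i-1}=\nu_{k_i}-\rho_i$, where $n_i$ is the index with $\nu_{k_i}=\mu_{n_i}$, so that $\supp(\rho_i)\sub E_{k_i}=\supp(\mu_{n_i})$. The topological conclusions then come out immediately from the nesting of the zero sets: since $\supp(\rho_i)\sub E_{k_i}\sm Z_{i-1}\sub\intt Z_i\sm Z_{i-1}=:W_i$, and the $W_i$ are open and pairwise disjoint (for $i<j$ one has $W_i\sub Z_i\sub Z_{j-1}$ while $W_j\cap Z_{j-1}=\emptyset$), the supports are pairwise disjoint and, each being finite and contained in the separating open set $W_i$, their union $\bigcup_i\supp(\rho_i)$ is a discrete subset of $X$. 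Thus the ``shell'' structure of the sets $Z_i$ settles disjointness and discreteness cleanly.

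The remaining, and main, difficulty is to verify that after normalisation $\langle\rho_i\rangle$ is still a JN-sequence. Boundedness of the norms away from $0$ is easy: by (L.2), $\|d_i\|=|\nu_{k_i}|(Z_{i-1})\le\sum_{j\le i-1}|\nu_{k_i}|(U_{k_j})<\sum_j\eps_{k_j}<1/4$, so $\|\rho_i\|>3/4$. The delicate part is weak$^*$ nullity, $\rho_i(f)=\nu_{k_i}(f)-d_i(f)\to0$, which reduces to $d_i(f)\to0$; here $\|d_i\|$ need \emph{not} tend to $0$, so Lemma \ref{lemma:norm_conv_fsjn_seqs} is unavailable and a direct estimate is required. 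I would fix a stage $m$ and split $d_i(f)=\nu_{k_i}\rstr Z_m(f)+\nu_{k_i}\rstr(Z_{i-1}\sm Z_m)(f)$. For the first summand, $(1-g_m)f$ is a \emph{fixed} continuous function, so $\nu_{k_i}((1-g_m)f)\to0$ as $i\to\infty$; as $1-g_m$ equals $1$ on $Z_m$ and lies strictly between $0$ and $1$ only on $g_m^{-1}[(0,1)]$, where $|\nu_{k_i}|<1/2^m$, this gives $\limsup_i|\nu_{k_i}\rstr Z_m(f)|\le\|f\|_\infty/2^m$. For the second summand I would classify each $x\in Z_{i-1}\sm Z_m$ by the least stage $\ell\in\{m+1,\dots,i-1\}$ with $x\in Z_\ell$; unwinding $g_\ell=p_{a_\ell,b_\ell}\circ\min(g_{\ell-1},h_{k_\ell})$ shows such $x$ lies either in $g_{\ell-1}^{-1}[(0,1)]$ or in $U_{k_\ell}$, whence $|\nu_{k_i}|(Z_{i-1}\sm Z_m)\le\sum_{\ell>m}1/2^{\ell-1}+\sum_{\ell>m}\eps_{k_\ell}=:\eta_m$. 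Therefore $\limsup_i|d_i(f)|\le\|f\|_\infty(1/2^m+\eta_m)$ for every $m$, and letting $m\to\infty$ yields $d_i(f)\to0$. Combined with $\|\rho_i\|>3/4$ and Lemma \ref{lemma:fsjn_f_bounded}, the normalised sequence $\langle\rho_i/\|\rho_i\|\rangle$ is the desired disjointly supported JN-sequence with discrete union of supports. The crux is exactly this last estimate: the per-step deleted mass does not vanish, so the proof must combine the preparatory neighbourhood control (handling the mass near earlier supports) with the transition-region bounds of Lemma \ref{lem:main} (handling the mass captured in a fixed zero set $Z_m$).
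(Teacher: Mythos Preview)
Your proof is correct and follows essentially the same route as the paper's: prepare via Corollary~\ref{cor:subseq}, apply Lemma~\ref{lem:main}, restrict each selected measure to the complement of the previous zero set $Z_{i-1}$, and verify weak* nullity by combining the JN property applied to a fixed auxiliary function built from $g_m$ (you use $(1-g_m)f$, the paper uses $g_m f$) with a shell-by-shell estimate on $Z_{i-1}\setminus Z_m$. One small point: since $X$ is merely Tychonoff, $\|f\|_\infty$ may be infinite, so your estimates should use $\sup_{x\in S(\mu_n)}|f(x)|$, which is finite by Lemma~\ref{lemma:fsjn_f_bounded}, exactly as the paper does.
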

\begin{proof}
Using inductively Corollary \ref{cor:subseq}, we find a strictly increasing sequence $\seqk{n_k}$ and a sequence $\seqk{U_k}$ of (not necessarily pairwise disjoint) open subsets of $X$ such that for every $k\io$ we have $\supp\big(\mu_{n_k}\big)\sub U_k$ and
\[\tag{Q.1}\big|\mu_{n_i}\big|\big(U_k\big)<\frac{1}{4}\cdot\frac{1}{2^k}\]
for every $i>k$. For every $k\io$ set $E_k=\supp\big(\mu_{n_k}\big)$ and $\nu_k=\mu_{n_k}$. Let sequences $\seqi{k_i}$ and $\seqi{g_i}$ be as in Lemma \ref{lem:main}.

For every $i\io$ put $C_i=g_i^{-1}(0)$ and notice that by condition (L.2) we have
\[C_i\sub\bigcup_{j=0}^i U_{k_j},\]
so condition (Q.1) gives us that
\[\tag{Q.2}\big|\nu_{k_l}\big|\big(C_i\big)<1/2\]
for every $l>i$. Condition (L.3) implies for every $i\io$ that
\[\tag{Q.3}C_i\sub C_{i+1}\quad\text{and}\quad C_{i+1}\sm C_i\sub g_i^{-1}[(0,1)]\cup U_{k_{i+1}},\]
hence, by conditions (L.4) and (Q.1),
\[\tag{Q.4}\big|\nu_{k_l}\big|\big(C_{i+1}\sm C_i\big)<\frac{1}{2^i}+\frac{1}{4}\cdot\frac{1}{2^{k_{i+1}}}<\frac{2}{2^i}\]
for every $l>i+1$.

Assuming that $C_{-1}=\emptyset$, for every $i\io$ define the measure $\lambda_i$ on $X$ by the formula:
\[\lambda_i=\nu_{k_i}\rstr\big(X\sm C_{i-1}\big).\]
It follows that $\supp\big(\lambda_i\big)\sub\supp\big(\nu_{k_i}\big)$ and condition (Q.2) implies that $\big\|\lambda_i\big\|>1/2$.

We now show that $\lim_{i\to\infty}\lambda_i(f)=0$ for every $f\in C(X)$. So let us fix $f\in C(X)$ and $\eps>0$. By Lemma \ref{lemma:fsjn_f_bounded}, there is $M>0$ such that $|f(x)|\le M$ for every $x\in S\big(\seqi{\nu_{k_i}}\big)$. Let $m\io$ be such that $\sum_{i=m}^\infty1/2^i<\eps$. For every $x\in X$ set $g(x)=g_m(x)\cdot f(x)$, so $g\in C(X,[0,1])$. Conditions (L.4) and (Q.3) and the definitions of $C_m$ and $\lambda_l$ imply that for every $l>m$ we have:
\begin{align*}\tag{Q.5}
\big|\lambda_l(f)-\lambda_l(g)\big|&\le\big|\int_{C_m}(f-g)\der\lambda_l\big|+\big|\int_{g_m^{-1}[(0,1)]}(f-g)\der\lambda_l\big|+\big|\int_{g_m^{-1}(1)}(f-g)\der\lambda_l\big|\\
&\le0 + M\cdot\frac{1}{2^m} + 0 = M/2^m<M\cdot\eps.
\end{align*}
Since $\lim_{i\to\infty}\nu_{k_i}(g)=0$ (as $\seqi{\nu_{k_i}}$ is a subsequence of the JN-sequence $\seqn{\mu_n}$), there is $n\io$ such that 
\[\tag{Q.6}\big|\nu_{k_l}(g)\big|<\eps\]
for every $l>n$. For $l>m+1$, by (Q.4) we have:
\begin{align*}\tag{Q.7}
&\big|\lambda_l(g)-\nu_{k_l}(g)\big|\\
&\le\big|\int_{C_m}g\,\der\big(\lambda_l-\nu_{k_l}\big)\big|+\sum_{i=m}^{l-2}\big|\int_{C_{i+1}\sm C_i}g\,\der\big(\lambda_l-\nu_{k_l}\big)\big|+\big|\int_{X\sm C_{l-1}}g\,\der\big(\lambda_l-\nu_{k_l}\big)\big|\\
&\le0+\sum_{i=m}^{l-2}M\cdot\big|\nu_{k_l}\big|\big(C_{i+1}\sm C_i\big)+0\le M\cdot\sum_{i=m}^{l-2}\frac{2}{2^i}<2M\eps.
\end{align*}
Finally, for every $l>\max(n,m+1)$, by conditions (Q.5)--(Q.7), we get that 
\[\big|\lambda_l(f)\big|<\eps(1+3M),\]
which implies that $\lim_{i\to\infty}\lambda_i(f)=0$.

The set $S\big(\lambda_i\big)=\bigcup_{i\io}\supp\big(\lambda_i\big)$ is a discrete subset of $X$, because the supports are pairwise disjoint and for every $i\io$ we have $\bigcup_{j=0}^i\supp\big(\lambda_j\big)\sub\intt C_i$ (by condition (L.1)) and for every $j>i$ the support $\supp\big(\lambda_j\big)$ is contained in the open set $X\sm C_i$ (by condition (L.3)). It follows that the sequence $\seqi{\rho_i}$ of measures on $X$ defined for every $i\io$ by the formula
\[\rho_i=\lambda_i\big/\big\|\lambda_i\big\|\]
is a JN-sequence on $X$ such that the set $S\big(\rho_i\big)=\bigcup_{i\io}\supp\big(\rho_i\big)$ is a discrete subset of $X$ and $\supp\big(\rho_i\big)\sub\supp\big(\mu_{n_{k_i}}\big)$ for every $i\io$.
\end{proof}

\medskip

\begin{proof}[Proof of Theorem {\ref{theorem:main}}]
Combine Theorems \ref{theorem:disjoint_supps} and \ref{theorem:discrete}.
\end{proof}

\begin{proof}[Proof of Corollary {\ref{cor:main}}]
Combine Theorem \ref{theorem:main} and the Tietze extension theorem.
\end{proof}

\medskip

The following corollary is also an immediate consequence of Theorem \ref{theorem:discrete}. Of course, it is true also for every space $X$ with a base consisting of clopen subsets.

\begin{corollary}
Let $X$ be a totally disconnected compact space carrying a JN-sequence. Then, there exist a JN-sequence $\seqn{\mu_n}$ on $X$ and a sequence $\seqn{U_n}$ of pairwise disjoint clopen subsets of $X$ such that $\supp\big(\mu_n\big)\sub U_n$ for every $n\io$.\noproof
\end{corollary}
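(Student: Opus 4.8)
The plan is to reduce everything to Theorem~\ref{theorem:discrete} and then exploit zero-dimensionality. First I would invoke Theorem~\ref{theorem:main} (equivalently, combine Theorems~\ref{theorem:disjoint_supps} and~\ref{theorem:discrete}) to replace the given JN-sequence by a disjointly supported JN-sequence $\seqn{\mu_n}$ on $X$ whose union of supports $D=\bigcup_{n\io}\supp\big(\mu_n\big)$ is a discrete subset of $X$. This is the only place where the hypothesis that $X$ carries a JN-sequence is used; from here on the argument is purely topological.

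Next I would produce the clopen sets. Since $X$ is totally disconnected and compact Hausdorff, it is zero-dimensional, so it admits a base consisting of clopen sets (this is the only role of compactness, and it is exactly why the remark about clopen bases applies). As $D$ is discrete, for each $x\in D$ there is an open set meeting $D$ only in $x$, and by zero-dimensionality I may shrink it to a clopen set $W_x$ with $W_x\cap D=\{x\}$. For each $n\io$ set $V_n=\bigcup_{x\in\supp(\mu_n)}W_x$; since $\supp\big(\mu_n\big)$ is finite, $V_n$ is clopen, $\supp\big(\mu_n\big)\sub V_n$, and crucially $V_n\cap D=\supp\big(\mu_n\big)$.

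The last step is a standard disjointification that preserves the supports. I would set $U_n=V_n\sm\bigcup_{k<n}V_k$. Because $\bigcup_{k<n}V_k$ is a finite union of clopen sets, each $U_n$ is clopen, and the $U_n$ are pairwise disjoint by construction. The key point making this work is that $\supp\big(\mu_n\big)\cap V_k=\supp\big(\mu_n\big)\cap\big(V_k\cap D\big)=\supp\big(\mu_n\big)\cap\supp\big(\mu_k\big)=\emptyset$ for $k<n$ (using disjointness of the supports together with $V_k\cap D=\supp\big(\mu_k\big)$), so subtracting the earlier $V_k$'s removes nothing from $\supp\big(\mu_n\big)$, whence $\supp\big(\mu_n\big)\sub U_n$ for every $n\io$.

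There is no serious obstacle here beyond correctly arranging the separation; the substantive content is entirely carried by Theorem~\ref{theorem:discrete}, and the only mild danger — that the clopen neighbourhoods $V_n$ of different supports overlap away from $D$ — is precisely what the disjointification resolves, the equality $V_n\cap D=\supp\big(\mu_n\big)$ guaranteeing that passing from $V_n$ to $U_n$ does not shrink any support.
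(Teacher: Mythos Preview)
Your proof is correct and follows precisely the approach the paper intends: the corollary is stated with no proof as an immediate consequence of Theorem~\ref{theorem:discrete} (together with the remark about spaces having a clopen base), and you have supplied exactly the natural details---discreteness of $D$ gives clopen neighbourhoods $W_x$ isolating each point of $D$, and the standard disjointification $U_n=V_n\setminus\bigcup_{k<n}V_k$ works because $V_k\cap D=\supp(\mu_k)$ ensures no support point is lost.
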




\section{Sizes of supports in JN-sequences \label{section:sizes_of_supports}}

In this section we will study possible cardinalities of supports of measures from JN-sequences. We have two cases here: either (1) a space $X$ admits a JN-sequence $\seqn{\mu_n}$ for which there exists $M\io$ such that $\big|\supp\big(\mu_n\big)\big|\le M$ for every $n\io$, or (2) every JN-sequence $\seqn{\mu_n}$ on $X$ has the property that $\lim_{n\to\infty}\big|\supp\big(\mu_n\big)\big|=\infty$. As an example of the former case we may name any space $X$ having a non-trivial convergent sequence. An appropriate example for the latter case is more difficult to find---however, it appears that the space $K$ considered in \cite[Section 4]{BKS19} (\textit{Plebanek's example}) has the required property. In Subsection \ref{section:sizes_two_examples} we prove this statement as well as we present another example (investigated by Bereznitski\u{\i} and Schachermayer) which is in many aspects very similar to Plebanek's one but satisfies the case (1).

In Subsection \ref{section:study_of_sizes_of_supports} we will provide several general statements concerning cardinalities of supports. In particular, we prove in Theorem \ref{theorem:sizes_of_supps} that if a  space $X$ satisfies the case (1), then there exists a JN-sequence $\seqn{\mu_n}$ such that $\big|\supp\big(\mu_n\big)\big|=2$ for every $n\io$.

\subsection{Two examples}\label{section:sizes_two_examples}

We first recall some standard notions. For a Boolean algebra $\aA$ by $St(\aA)$ we denote its Stone space. Recall that $St(\aA)$ is a totally disconnected compact space and that the Boolean algebra of clopen subsets of $St(\aA)$ is isomorphic to $\aA$. For every element $A\iA$ by $[A]_\aA$ we denote the corresponding clopen subset of $St(\aA)$. 

Recall also that, by the Stone--Weierstrass theorem, a finitely supported sequence $\seqn{\mu_n}$ of measures on a totally disconnected compact space $K$ (or, equivalently, on the Stone space $St(\aA)$ of some Boolean algebra $\aA$) is weak* null if and only if $\lim_{n\to\infty}\mu_n(U)=0$ for every clopen set $U\sub K$.

\begin{example}\label{example:plebanek}
In \cite[Section 4]{BKS19}, the authors provided the example due to Plebanek which uses the following Boolean algebra $\dD$:
\[\dD=\Big\{A\in\wo\colon\ \lim_{n\to\infty}\frac{|A\cap\{0,\ldots,n-1\}|}{n}\in\{0,1\}\Big\}.\]
Since for each $n\io$ the set $\{n\}$ belongs to $\dD$ and is an atom therein, we may consider $St(\dD)$ as a compactification of $\omega$. Let us additionally define the ideal $\zZ$ and the ultrafilter $p$ in $\dD$ as follows:
\[\zZ=\Big\{A\in\wo\colon\ \lim_{n\to\infty}\frac{|A\cap\{0,\ldots,n-1\}|}{n}=0\Big\}\]
and
\[p=\dD\sm\zZ.\]
\end{example}

We have the following result.

\begin{proposition}\label{prop:ex_plebanek}
The Boolean algebra $\dD$ has the following properties:
\begin{enumerate}
    \item $St(\dD)$ does not have any non-trivial convergent sequences;
    \item if $X\sub St(\dD)$ is infinite, then there exists an infinite subset $Y\sub X$ such that $\ol{Y}^{St(\dD)}$ is homeomorphic to $\bo$;
    \item $St(\dD)$ carries a JN-sequence;
    \item every JN-sequence $\seqn{\mu_n}$ on $St(\dD)$ has the property that $\lim_{n\to\infty}\big|\supp\big(\mu_n\big)\big|=\infty$.
\end{enumerate}
\end{proposition}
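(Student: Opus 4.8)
The plan is to read off the four assertions from two structural observations about $St(\dD)$ together with one combinatorial lemma that is the real engine. First I would record the topology of $St(\dD)$: the principal ultrafilters $\{n\}$ ($n\io$) are exactly the isolated points and are dense; the filter $p$ of density-one sets is the unique point containing no member of $\zZ$, while every other point contains some $A\in\zZ$; and for each infinite $A\in\zZ$ the clopen set $[A]_\dD$ is homeomorphic to $\beta\omega$, since every subset of a density-zero set has density zero, so the relative algebra $\dD\restriction A$ is all of $\wp(A)$. Thus $St(\dD)\setminus\{p\}=\bigcup\{[A]_\dD:A\in\zZ\}$ is covered by clopen copies of $\beta\omega$, and $p$ is the only point outside all of them.

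The crux is a clustering lemma: every infinite $X\sub St(\dD)\setminus\{p\}$ has an infinite subset contained in a single chunk $[A]_\dD$ with $A\in\zZ$. If infinitely many points of $X$ lie in $\omega$ this is trivial, as any infinite subset of $\omega$ contains an infinite density-zero subset $A$ and those points lie in $[A]_\dD$. Otherwise all but finitely many $x_k\in X$ are non-principal and $\ne p$, so each carries a witness $B_k\in x_k$ with $B_k\in\zZ$. Here is the heart of the matter: for each $k$ choose $m_k$ so large that $|B_k\cap[0,N)|<N/2^k$ for all $N\ge m_k$ (possible since $B_k\in\zZ$), and set $C_k=B_k\cap[m_k,\infty)$. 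Each $C_k$ is an infinite member of $x_k$ (a cofinite restriction of $B_k$), and $|C_k\cap[0,N)|/N\le 2^{-k}$ for every $N$. Hence, with $A=\bigcup_k C_k$, Tannery's theorem (dominated convergence for series, summable dominant $2^{-k}$, each term $|C_k\cap[0,N)|/N\to0$) gives $|A\cap[0,N)|/N\to0$, so $A\in\zZ$, while $A\supseteq C_k\in x_k$ forces $x_k\in[A]_\dD$ for all $k$. I expect this truncation-plus-dominated-convergence step — absorbing infinitely many mutually unrelated density-zero sets, sitting in arbitrary non-principal ultrafilters, into one density-zero set — to be the main obstacle, precisely because membership $A\in x_k$ for a non-principal $x_k$ cannot be certified by any finite part of $A$.

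Assertions (1) and (2) then follow quickly. For (1), a nontrivial convergent sequence of distinct points $x_k\to x$ with $x\notin\omega$ has limit either $x\ne p$ or $x=p$. If $x\ne p$, then $x\in[A]_\dD$ for some $A\in\zZ$ and the sequence eventually enters the clopen neighbourhood $[A]_\dD\cong\beta\omega$, contradicting that $\beta\omega$ has no nontrivial convergent sequences. If $x=p$, the clustering lemma yields an infinite subsequence inside some $[A]_\dD$, whence the clopen neighbourhood $[A^c]_\dD$ of $p$ (note $A^c$ has density one, so $A^c\in p$) omits infinitely many terms, a contradiction. For (2), given infinite $X\sub St(\dD)$ discard $p$, apply the clustering lemma to get an infinite $X'\sub X\cap[A]_\dD$, and inside $[A]_\dD\cong\beta\omega$ invoke the standard fact that every infinite subset of $\beta\omega$ contains an infinite subset whose closure is homeomorphic to $\beta\omega$; since $[A]_\dD$ is clopen, this closure is the same in $St(\dD)$.

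For (3) I would exhibit a JN-sequence explicitly: put $\mu_n=\frac{1}{2n}\sum_{k=0}^{n-1}\delta_k-\frac{1}{2n}\sum_{k=n}^{2n-1}\delta_k$. Then $\|\mu_n\|=1$ and $\mu_n(1_{St(\dD)})=0$, while for $A\in\zZ$ both $|A\cap[0,n)|/n$ and $|A\cap[n,2n)|/n$ tend to $0$, so $\mu_n([A]_\dD)\to0$; balancedness extends this to density-one $A$, hence to all clopen sets, hence (Stone--Weierstrass) $\mu_n$ is weak* null, with $|\supp(\mu_n)|=2n\to\infty$ as (4) demands. Finally (4) follows from (1) by compactness: if some subsequence had supports of constant size $m$, write $\mu_n=\sum_{i=1}^m\alpha_{n,i}\delta_{x_{n,i}}$ and pass, using compactness of $St(\dD)^m\times[-1,1]^m$, to a further subsequence with $x_{n,i}\to\xi_i$ and $\alpha_{n,i}\to a_i$. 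Weak* nullity forces $\sum_i a_i f(\xi_i)=0$ for every $f\in C(St(\dD))$, i.e. $\sum_i a_i\delta_{\xi_i}=0$, while $\sum_i|a_i|=1$; so at some $y$ two indices $i\ne j$ satisfy $\xi_i=\xi_j=y$ with $a_i,a_j\ne0$. Then $x_{n,i}\ne x_{n,j}$ both tend to $y$, and interleaving them yields a nontrivial convergent sequence, contradicting (1); hence no such subsequence exists and $\lim_n|\supp(\mu_n)|=\infty$.
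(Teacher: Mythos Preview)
Your argument is correct. For parts (1)--(3) you essentially reprove what the paper outsources to \cite[Section~4, Facts~1--3]{BKS19}; in particular your ``clustering lemma'' is exactly the content of their Fact~1, and your explicit JN-sequence is the one from Fact~3.

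The real difference lies in part (4). The paper first invokes the general machinery developed later in Section~\ref{section:study_of_sizes_of_supports}: assuming a JN-sequence with bounded supports, Theorem~\ref{theorem:sizes_of_supps} and Lemma~\ref{lemma:disjoint_supps_size_2} reduce to a disjointly supported JN-sequence $\mu_n=\frac{1}{2}(\delta_{x_n}-\delta_{y_n})$ with $p\notin\{x_n,y_n\}$; then the clustering lemma places (a subsequence of) the supports inside a clopen copy of $\beta\omega$, and the contradiction is that $\beta\omega$ carries no JN-sequence. Your route is more direct and entirely self-contained: a compactness argument in $St(\dD)^m\times[-1,1]^m$ forces the limit measure $\sum a_i\delta_{\xi_i}$ to vanish while $\sum|a_i|=1$, which produces a collision $\xi_i=\xi_j$ and hence a nontrivial convergent sequence, contradicting (1). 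Your proof of (4) thus uses only (1) and elementary compactness, avoiding both Theorem~\ref{theorem:sizes_of_supps} and the fact that $\beta\omega$ carries no JN-sequence. The paper's approach, on the other hand, showcases how the general size-reduction theorem applies in practice and ties the example back into the main narrative of Section~\ref{section:sizes_of_supports}. (One tiny remark: in your final step you assert $a_i,a_j\neq 0$; strictly speaking you only need $\xi_i=\xi_j$ for some $i\neq j$, but in fact your stronger claim also holds, since grouping the indices at $y=\xi_{i_0}$ with $a_{i_0}\neq 0$ forces another nonzero coefficient there to achieve cancellation.)
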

\begin{proof}
For (1)--(3), see \cite[Section 4, Fact 1--3, page 3026]{BKS19}. We
now prove (4), so for the sake of contradiction let us assume that
there exists a JN-sequence $\seqn{\mu_n}$ on $St(\dD)$ and  an
integer $M\io$ such that $\big|\supp\big(\mu_n\big)\big|\le M$ for every
$n\io$. By Theorem \ref{theorem:sizes_of_supps}, we may assume that
$\mu_n=\frac{1}{2}\big(\delta_{x_n}-\delta_{y_n}\big)$. By Lemma
\ref{lemma:disjoint_supps_size_2}, we may also assume that
$\big\{x_n,y_n\big\}\cap\big\{x_{n'},y_{n'}\big\}=\emptyset$ for
every $n\neq n'\io$ and that $p\not\in\big\{x_n,y_n\big\}$ for every
$n\io$. We need to consider several cases:
\begin{enumerate}[(i)]
    \item There is $Q\in\cso$ such that $\big\{x_n,y_n\big\}\sub\omega$ for every
     $n\in Q$. We then go to a subsequence $\seqk{n_k\in Q}$ such that $A=\bigcup_{k\io}\big\{x_{n_k},y_{n_k}\big\}\in\zZ$. Since $[A]_\dD$ is homeomorphic to $\bo$, it follows that $\seqk{\mu_{n_k}\rstr[A]_\dD}$ gives rise to a JN-sequence in $\bo$, which is impossible (see \cite{BKS19} or \cite{KSZgroth}).
    \item There is $Q\in\cso$ such that   $\big\{x_n,y_n\big\}\cap\omega=\emptyset$ for
     every $n\in Q$. We find $A_n\in\zZ$ such that $\big\{x_n,y_n\big\}\sub\big[A_n\big]_\dD$
     for every $n\in Q$. By \cite[Section 4, Fact 1, page 3026]{BKS19}, there is infinite
      $B\in\zZ$ such that $A_n\sm B$ is finite for every $n\in Q$. Since
       $\big\{x_n,y_n\big\}\cap\omega=\emptyset$ for every $n\in Q$, it follows that
        $A_n\sm B\not\in x_n$ and $A_n\sm B\not\in y_n$, and hence
        $\big\{x_n,y_n\big\}\sub[B]_\dD$. Again, since $[B]_\dD$ is homeomorphic to $\bo$, we
        obtain a JN-sequence on $\bo$, which is a contradiction.
    \item There is $Q\in\cso$ such that $\big|\big\{x_n,y_n\big\}\cap\omega\big|=1$ for every $n\in Q$. Without loss of generality, we may assume that $x_n\io$ for every $n\in Q$. First, let us find $R\in[Q]^\omega$ such that $\big\{x_n\colon\ n\in R\big\}\in\zZ$. Then, similarly as in (ii), let us find $B\in\zZ$ such that $\big\{y_n\colon\ n\in R\big\}\sub[B]_\dD$. Since $\zZ$ is an ideal, $C=\big\{x_n\colon\ n\in R\big\}\cup B\in\zZ$. It follows that $[C]_\dD$ is homeomorphic to $\bo$ and $\seq{\mu_n\rstr[C]_\dD}{n\in R}$ is a JN-sequence on $[C]_\dD$, a contradiction.
\end{enumerate}
\end{proof}

\begin{example}\label{example:schachermayer}
In \cite{Ber71} Bereznitski\u{\i} investigated properties of the following example $K_B$ of a compact space: Let  $K_B$ be a quotient space obtained by identifying points $(x,0)$ and $(x,1)$ in $\beta\omega \times \{0,1\}$ for all $x\in \beta\omega \setminus \omega$. 

In \cite[Example 4.10]{Sch82} Schachermayer considered the same space $K_B$ described as the Stone space of some simple Boolean algebra (see also \cite[Example 6.9]{MSnf}).
\end{example}

The next proposition shows that the above space $K_B$ and Plebanek's example $St(\dD)$ share  similar properties.

\begin{proposition}\label{prop:ex_schachermayer}
The compact space $K_B$ has the following properties:
\begin{enumerate}
    \item $K_B$ does not have any non-trivial convergent sequences;
    \item if $X\sub K_B$ is infinite, then there exists an infinite subset
    $Y\sub X$ such that $\ol{Y}^{K_B}$ is homeomorphic to $\bo$;
    \item there exists a JN-sequence $\seqn{\mu_n}$ on $K_B$ such that $\big|\supp\big(\mu_n\big)\big|=2$ for every $n\io$.
\end{enumerate}
\end{proposition}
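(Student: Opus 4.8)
The plan is to first pin down the structure of $K_B$ and then read off all three properties from it. Write $\pi\colon\bo\times\{0,1\}\to K_B$ for the quotient map. Over each $n\io$ the map $\pi$ has a two-point fibre, so we obtain two distinct points $a_n=\pi(n,0)$ and $b_n=\pi(n,1)$, while over each $x\in\bo\sm\omega$ the fibre is a single point $[x]=\pi(x,0)=\pi(x,1)$. The space $K_B$ is compact, Hausdorff and zero-dimensional (this is exactly the content of its description as a Stone space in \cite{Sch82}). The key observation is that $B_0:=\pi(\bo\times\{0\})$ and $B_1:=\pi(\bo\times\{1\})$ are two \emph{closed} copies of $\bo$: indeed $\pi$ restricted to $\bo\times\{i\}$ is a continuous bijection from a compact space onto its image in the Hausdorff space $K_B$, hence a homeomorphism. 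Moreover $K_B=B_0\cup B_1$ (every point is some $a_n$, $b_n$, or $[x]$) and $B_0\cap B_1=\{[x]\colon x\in\bo\sm\omega\}$. Finally, a clopen subset of $K_B$ pulls back along $\pi$ to a clopen set of $\bo\times\{0,1\}$ that is a union of fibres, which forces it to have the form $(\ol{A_0}\times\{0\})\cup(\ol{A_1}\times\{1\})$ with $A_0,A_1\sub\omega$ and $A_0\triangle A_1$ finite (so that the two pieces agree over the remainder).

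For (3) I would simply take $\mu_n=\tfrac12\big(\delta_{a_n}-\delta_{b_n}\big)$, so that $\|\mu_n\|=1$ and $|\supp(\mu_n)|=2$ for every $n\io$. By the Stone--Weierstrass criterion recalled before Example \ref{example:plebanek}, it suffices to check that $\mu_n(U)\to0$ for every clopen $U\sub K_B$. Writing $U$ as the pair $(A_0,A_1)$ above, we get $\mu_n(U)=\tfrac12\big(\chi_{A_0}(n)-\chi_{A_1}(n)\big)$, which equals $0$ for all sufficiently large $n$ because $A_0\triangle A_1$ is finite. Hence $\seqn{\mu_n}$ is weak* null and is the desired JN-sequence. (Equivalently, one can argue with continuous functions: any $f\in C(K_B)$ restricts to $g_0,g_1\in C(\bo)$ on the two copies with $g_0=g_1$ on $\bo\sm\omega$; then $g_0-g_1$ vanishes on $\bo\sm\omega$, so the bounded sequence $g_0(n)-g_1(n)=f(a_n)-f(b_n)$ has all its cluster points equal to $0$ and hence tends to $0$.)

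For (2), let $X\sub K_B$ be infinite. Since $K_B=B_0\cup B_1$, at least one of $X\cap B_0$, $X\cap B_1$ is infinite; say $X\cap B_0$ is. Transporting through the homeomorphism $B_0\cong\bo$ and using the classical fact that every infinite subset of $\bo$ contains an infinite subset whose closure in $\bo$ is homeomorphic to $\bo$ (the same statement used for $St(\dD)$ in Proposition \ref{prop:ex_plebanek}.(2)), we obtain an infinite $Y\sub X\cap B_0$ with $\ol{Y}^{B_0}$ homeomorphic to $\bo$. As $B_0$ is closed in $K_B$ we have $\ol{Y}^{K_B}=\ol{Y}^{B_0}\cong\bo$, as required.

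Finally, (1) is immediate from (2): if $\seqn{z_n}$ were a non-trivial convergent sequence with limit $z$, its range $X=\{z_n\colon n\io\}$ would be infinite, and every infinite $Y\sub X$ would still converge to $z$, so $\ol{Y}^{K_B}=Y\cup\{z\}$ would be a countable metrizable convergent sequence --- contradicting the copy of $\bo$ produced by (2). The routine structural facts (compactness and zero-dimensionality of $K_B$, the clopen description, and the cited $\bo$-embedding theorem) are the only mildly technical ingredients; the genuine point, and where one must be slightly careful, is the mod-finite description of clopen sets of $K_B$, which is precisely what makes $\mu_n(U)$ eventually vanish and thereby yields the $2$-point JN-sequence in (3).
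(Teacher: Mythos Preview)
Your proof is correct and follows essentially the same approach as the paper: both arguments rest on the decomposition $K_B=B_0\cup B_1$ into two closed copies of $\bo$, and both use the very same JN-sequence $\mu_n=\tfrac12(\delta_{\pi(n,0)}-\delta_{\pi(n,1)})$, verified via the clopen criterion. Your write-up is a bit more detailed (you spell out the mod-finite description of clopen sets and derive (1) from (2) rather than directly from the $\bo$-decomposition), but there is no substantive difference in method.
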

\begin{proof}
Properties (1) and (2) follow immediately from the fact that $K_B$ is a union of two copies of $\bo$, and the space $\bo$ has these properties, cf.\ \cite[Chapter 3.6]{En}.

Property (3) is witnessed by the following JN-sequence:
\[\mu_n={\textstyle\frac{1}{2}}\big(\delta_{q((n,0))}-\delta_{q((n,1))}\big),\ n\io,\]
where $q\colon\beta\omega \times \{0,1\} \to K_B$ is the quotient map. Note that for every clopen  $U\sub K_B$ and for all but finitely many $n\io$ we have $\big\{q((n,0)),q((n,1))\big\} \sub U$ or $\big\{q((n,0)),q((n,1))\big\} \cap U = \emptyset$ .
\end{proof}

\subsection{Estimations of sizes of supports\label{section:study_of_sizes_of_supports}}

We will now restrict our study to those spaces which admit JN-sequences with bounded sizes of supports, i.e. such JN-sequences $\seqn{\mu_n}$ that there exists $M\io$ such that $\big|\supp\big(\mu_n\big)\big|\le M$ for every $n\io$. We start with two simple lemmas.

\begin{lemma}\label{lemma:fsjn_conv_seq}
Let $X$ be a space. Fix a sequence $\seqn{x_n}$ in $X$ and a point $x\in X$. For every $n\io$ put $\mu_n=\frac{1}{2}\big(\delta_{x_n}-\delta_x\big)$. Then, $\seqn{\mu_n}$ is a JN-sequence if and only if $x_n\to x$ in $X$.\noproof
\end{lemma}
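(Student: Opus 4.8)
The plan is to exploit the fact that for the prescribed measures one has $\mu_n(f)=\tfrac12\big(f(x_n)-f(x)\big)$ for every $f\in C(X)$, so that the weak* convergence condition in the definition of a JN-sequence translates verbatim into the requirement that $f(x_n)\to f(x)$ for every continuous $f$. Both implications then follow by comparing this ``convergence against all continuous functions'' with genuine topological convergence in $X$, the bridge being that $X$ is Tychonoff.

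First I would dispose of the normalisation. Since $\big\|\tfrac12(\delta_{x_n}-\delta_x)\big\|=1$ precisely when $x_n\neq x$ (otherwise $\mu_n=0$), the condition $\|\mu_n\|=1$ for all $n\io$ is equivalent to $x_n\neq x$ for all $n$; I would record that both sides of the equivalence implicitly carry this non-triviality (a convergent sequence $x_n\to x$ that is to produce a JN-sequence must avoid the value $x$). For the implication from convergence to JN-sequence, assuming $x_n\to x$ with $x_n\neq x$, each $f\in C(X)$ is continuous, hence $f(x_n)\to f(x)$ and therefore $\mu_n(f)=\tfrac12\big(f(x_n)-f(x)\big)\to0$; together with $\|\mu_n\|=1$ this is exactly the assertion that $\seqn{\mu_n}$ is a JN-sequence. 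This direction is routine and uses nothing beyond continuity.

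The converse is where the only real content lies, and I expect the main (though entirely standard) obstacle to be the passage from scalar convergence to topological convergence. Assuming $\seqn{\mu_n}$ is a JN-sequence, the normalisation again forces $x_n\neq x$, while $\mu_n(f)\to0$ yields $f(x_n)\to f(x)$ for every $f\in C(X)$. To conclude $x_n\to x$ I would invoke complete regularity directly: given any open neighbourhood $U$ of $x$, choose $f\in C(X,[0,1])$ with $f(x)=0$ and $f\rstr(X\sm U)\equiv 1$; then $f(x_n)\to f(x)=0$ forces $f(x_n)<1$, hence $x_n\in U$, for all but finitely many $n$. As $U$ was arbitrary, $x_n\to x$. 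Equivalently, one may phrase this as the statement that the evaluation map $X\hookrightarrow\R^{C(X)}$ is a topological embedding for Tychonoff $X$, so that convergence in $X$ coincides with coordinatewise convergence of $\seqn{f(x_n)}$ for every $f\in C(X)$. This completes the equivalence.
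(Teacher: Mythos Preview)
Your argument is correct and is exactly the elementary verification the authors have in mind; the paper marks this lemma with \noproof\ and gives no argument at all, so there is nothing to compare against beyond the observation that $\mu_n(f)=\tfrac12\big(f(x_n)-f(x)\big)$ together with complete regularity does the job. Your remark on the normalisation caveat (that $x_n\neq x$ is implicitly required for $\|\mu_n\|=1$) is a fair point the statement glosses over.
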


\begin{lemma}\label{lemma:disjoint_supps_size_2}
Let a space $X$ admit a JN-sequence $\seqn{\mu_n}$ defined for every $n\io$ as $\mu_n=\frac{1}{2}\big(\delta_{x_n}-\delta_{y_n}\big)$, where $x_n,y_n\in X$. Then, there exists a disjointly supported JN-sequence $\seqn{\nu_n}$ defined for every $n\io$ as $\nu_n=\frac{1}{2}\big(\delta_{u_n}-\delta_{w_n}\big)$, where $u_n,w_n\in X$.
\end{lemma}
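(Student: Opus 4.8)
The plan is to reduce the statement to a purely combinatorial dichotomy on the supports. Writing $e_n=\{x_n,y_n\}$ (with $x_n\neq y_n$, since $\|\mu_n\|=1$) and $S=\bigcup_{n\io}e_n$, I first record the trivial but crucial fact that every subsequence of a JN-sequence is again a JN-sequence: if $\seqn{\mu_n}$ is a JN-sequence and $Q\sub\omega$ is infinite, then $\seq{\mu_n}{n\in Q}$ still consists of norm-one measures and still satisfies $\mu_n(f)\to0$ along $Q$. Hence it suffices either to find an infinite $Q\sub\omega$ for which the pairs $\{e_n\colon n\in Q\}$ are pairwise disjoint, or else to build the required sequence by hand out of points already appearing in $S$. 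I would split into two cases according to whether some point of $S$ belongs to infinitely many of the supports $e_n$.

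\emph{Case A: every $v\in S$ lies in only finitely many of the sets $e_n$.} Here I would construct an infinite matching greedily. Having chosen pairwise disjoint $e_{n_0},\dots,e_{n_{j-1}}$, their union $V$ is a finite subset of $S$; since each of the finitely many points of $V$ meets only finitely many edges, all but finitely many $e_m$ are disjoint from $V$, so I can pick $e_{n_j}$ (with $n_j>n_{j-1}$) disjoint from $V$. The resulting subsequence $\seq{\mu_{n_j}}{j\io}$ is the desired disjointly supported JN-sequence with two-point supports.

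\emph{Case B: some $v\in S$ lies in infinitely many $e_n$, say for $n$ in an infinite set $Q$.} Passing to an infinite subset of $Q$ I may assume $x_n=v$ for all $n\in Q$ (the case $y_n=v$ is symmetric), so that $y_n\neq v$ and, along $Q$, $\mu_n=\frac12\big(\delta_v-\delta_{y_n}\big)$. Since this is still a JN-sequence, Lemma \ref{lemma:fsjn_conv_seq} gives $y_n\to v$ in $X$. Next I would argue that $\big\{y_n\colon n\in Q\big\}$ is infinite: if some $w\neq v$ equalled $y_n$ for infinitely many $n$, then $f(w)=\lim_n f(y_n)=f(v)$ for every $f\in C(X)$, contradicting the separation of points by continuous functions on the Tychonoff space $X$. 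Thus I can extract a sequence $\seqk{z_k}$ of pairwise distinct points of $\big\{y_n\colon n\in Q\big\}$, still with $z_k\to v$, and set $\nu_k=\frac12\big(\delta_{z_{2k}}-\delta_{z_{2k+1}}\big)$. These have pairwise disjoint two-point supports, norm one, and $\nu_k(f)=\frac12\big(f(z_{2k})-f(z_{2k+1})\big)\to0$, so $\seqk{\nu_k}$ is the required JN-sequence.

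The only genuinely delicate point is Case B: there the disjoint supports cannot be obtained by passing to a subsequence of the original pairs (which may all share the common point $v$), so one has to manufacture new pairs. The argument hinges on converting ``infinite degree plus the JN property'' into the single convergent sequence $y_n\to v$ via Lemma \ref{lemma:fsjn_conv_seq}, and then on using complete regularity to guarantee that this auxiliary sequence takes infinitely many distinct values, so that the new pairs can be chosen disjoint.
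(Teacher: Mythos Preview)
Your proof is correct and uses the same two ingredients as the paper---Lemma~\ref{lemma:fsjn_conv_seq} and the ``pair up consecutive terms of a convergent sequence'' trick---but organizes the case split differently. The paper splits according to a global property of $X$: if $X$ contains any non-trivial convergent sequence $\seqn{z_n}$, set $\nu_n=\tfrac12(\delta_{z_{2n}}-\delta_{z_{2n+1}})$; otherwise, Lemma~\ref{lemma:fsjn_conv_seq} forces every point to lie in only finitely many supports, and one passes to a disjointly supported subsequence exactly as in your Case~A. You instead split on whether some point has infinite degree in the support hypergraph, and in your Case~B you manufacture the convergent sequence \emph{from the given JN-sequence} via Lemma~\ref{lemma:fsjn_conv_seq}. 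Your dichotomy is intrinsic to $\seqn{\mu_n}$ rather than to $X$, and has the minor bonus that the resulting $\nu_n$'s always have supports contained in $S\big(\mu_n\big)$, whereas the paper's Case~1 may use a convergent sequence unrelated to the original supports.
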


\begin{proof}
If the space $X$ contains a non-trivial convergent sequence $\seqn{z_n}$, then it is easy to see that the measures defined as $\nu_n=\frac{1}{2}\big(\delta_{z_{2n}}-\delta_{z_{2n+1}}\big)$ form a JN-sequence satisfying the conclusion of the lemma.

If $X$ does not contain any non-trivial convergent sequences, then, by Lemma \ref{lemma:fsjn_conv_seq}, for every $A\in\cso$ we have $\bigcap_{n\in A}\supp\big(\mu_n\big)=\emptyset$, so there exists a subsequence $\seqk{\mu_{n_k}}$ such that $\supp\big(\mu_{n_k}\big)\cap\supp\big(\mu_{n_l}\big)=\emptyset$ for every $k\neq l\io$. To finish the proof put $\nu_k=\mu_{n_k}$ for every $k\io$.
\end{proof}

The next observation follows immediately from the definition of a JN-sequence applied for the constant function $1_X$ on $X$. 

\begin{lemma}\label{lemma:supports_never_singletons}
Let $\seqn{\mu_n}$ be a JN-sequence on a  space $X$. Then, there is $N\io$ such that for every $n>N$ the support $\supp\big(\mu_n\big)$ is not a singleton. \noproof
\end{lemma}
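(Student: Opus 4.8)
The plan is to test the weak$^*$-null condition of the JN-sequence against the single continuous function $1_X$, the constant function equal to $1$ on $X$, and to contrast this with the normalisation $\big\|\mu_n\big\|=1$.

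First I would recall that for a finitely supported measure written as $\mu_n=\sum_{x\in\supp(\mu_n)}\alpha_x\cdot\delta_x$ one has $\mu_n\big(1_X\big)=\sum_{x\in\supp(\mu_n)}\alpha_x$, namely the \emph{signed} sum of the coefficients, whereas $\big\|\mu_n\big\|=\sum_{x\in\supp(\mu_n)}\big|\alpha_x\big|=1$. The crucial point is that these two quantities coincide in absolute value precisely when the support consists of a single point: if $\supp\big(\mu_n\big)=\big\{x_n\big\}$, then $\mu_n=\alpha_n\cdot\delta_{x_n}$ with $\big|\alpha_n\big|=\big\|\mu_n\big\|=1$, so $\big|\mu_n\big(1_X\big)\big|=\big|\alpha_n\big|=1$.

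On the other hand, since $\seqn{\mu_n}$ is a JN-sequence and $1_X\in C(X)$, the defining convergence condition yields $\mu_n\big(1_X\big)\to0$. Hence there is $N\io$ such that $\big|\mu_n\big(1_X\big)\big|<1$ for every $n>N$; by the previous paragraph none of these $\mu_n$ can be supported on a single point, which is exactly the assertion of the lemma.

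I do not expect any genuine obstacle here: the statement is an immediate consequence of evaluating the weak$^*$ convergence against one distinguished, universally available continuous function, and the only thing to check is the elementary identity $\big|\mu_n\big(1_X\big)\big|=\big\|\mu_n\big\|$ that holds for singleton supports. This is why I would present it as an \emph{observation} rather than a substantive lemma.
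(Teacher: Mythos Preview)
Your proof is correct and follows exactly the approach the paper indicates: the paper states that the lemma ``follows immediately from the definition of a JN-sequence applied for the constant function $1_X$ on $X$'' and gives no further details, which is precisely the argument you spell out.
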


\begin{proposition}\label{prop:constant_coeffs}
Let a space $X$ admit a JN-sequence $\seqn{\mu_n}$ such that there exists $M\ge2$, $M\io$, for which we have $\big|\supp\big(\mu_n\big)\big|=M$ for every $n\io$. For each $n\io$ write $\mu_n = \sum_{i=1}^M\alpha^n_i\delta_{x_i^{n}}$. Then, there exist $\alpha_1,\ldots,\alpha_M\in\R$ and an increasing sequence $\seqk{n_k}$  such that the measures $\nu_k = \sum_{i=1}^M\alpha_i\delta_{x_i^{n_k}}$, $k\io$, form a JN-sequence such that $\big\|\nu_k-\mu_{n_k}\big\| \to 0$ as $k\to\infty$.
\end{proposition}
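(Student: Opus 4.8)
The plan is to pass to a subsequence along which the bounded coefficient functions converge, declare their limits to be the constant coefficients $\alpha_1,\ldots,\alpha_M$, and then deduce that $\seqk{\nu_k}$ is a JN-sequence from the norm-convergence criterion Lemma \ref{lemma:norm_conv_fsjn_seqs}. The whole argument is an application of compactness together with the earlier machinery, so I expect no deep obstacle, only a couple of bookkeeping points that must be handled with care.

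First I would note that for each $n\io$ the coefficients satisfy $\sum_{i=1}^M\big|\alpha_i^n\big|=\big\|\mu_n\big\|=1$, so every sequence $\seqn{\alpha_i^n}$ takes values in the compact interval $[-1,1]$; thus the vector $\big(\alpha_1^n,\ldots,\alpha_M^n\big)$ ranges in the compact cube $[-1,1]^M$. By sequential compactness I can extract a strictly increasing sequence $\seqk{n_k}$ such that $\lim_{k\to\infty}\alpha_i^{n_k}=\alpha_i$ exists for every $i\in\{1,\ldots,M\}$. These limits are the required constant coefficients, and I set $\nu_k=\sum_{i=1}^M\alpha_i\delta_{x_i^{n_k}}$.

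Next comes the norm bookkeeping. Since the $M$ points $x_1^{n_k},\ldots,x_M^{n_k}$ are distinct (they are exactly the elements of $\supp\big(\mu_{n_k}\big)$), there is no cancellation among point masses, whence $\big\|\nu_k-\mu_{n_k}\big\|=\sum_{i=1}^M\big|\alpha_i-\alpha_i^{n_k}\big|\to 0$ as $k\to\infty$ by the choice of the $\alpha_i$. The same distinctness gives $\big\|\nu_k\big\|=\sum_{i=1}^M\big|\alpha_i\big|$, and because this is a \emph{finite} sum I may interchange it with the limit: $\sum_{i=1}^M\big|\alpha_i\big|=\lim_{k\to\infty}\sum_{i=1}^M\big|\alpha_i^{n_k}\big|=\lim_{k\to\infty}\big\|\mu_{n_k}\big\|=1$. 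Hence $\big\|\nu_k\big\|=1$ for every $k\io$, as needed.

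Finally I would invoke Lemma \ref{lemma:norm_conv_fsjn_seqs} with $\seqk{\mu_{n_k}}$ in the role of $\seqn{\mu_n}$ and $\seqk{\nu_k}$ in the role of $\seqn{\nu_n}$. The three hypotheses are all available: $\big\|\mu_{n_k}-\nu_k\big\|\to 0$ was just shown; $\seqk{\mu_{n_k}}$ is a JN-sequence, being a subsequence of one (the norms are $1$ and $\mu_{n_k}(f)\to 0$ for each $f\in C(X)$); $\big\|\nu_k\big\|=1$; and since $S\big(\nu_k\big)\sub\bigcup_{k\io}\supp\big(\mu_{n_k}\big)\sub S\big(\mu_n\big)$, every $f\in C(X)$ is bounded on $S\big(\nu_k\big)$ by Lemma \ref{lemma:fsjn_f_bounded}. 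Lemma \ref{lemma:norm_conv_fsjn_seqs} then yields that $\seqk{\nu_k}$ is a JN-sequence, completing the proof. The only places demanding attention are the verification that $\big\|\nu_k\big\|=1$ (where the finiteness of the support and the interchange of the finite sum with the limit are essential, and where one must remember that some $\alpha_i$ may vanish without affecting the computation) and routing the boundedness requirement of Lemma \ref{lemma:norm_conv_fsjn_seqs} through Lemma \ref{lemma:fsjn_f_bounded} rather than assuming it.
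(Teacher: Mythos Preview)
Your proof is correct and essentially identical to the paper's: both extract a convergent subsequence of the coefficient vectors by compactness (the paper uses the $\ell_1$-sphere $\{x\in\R^M:\|x\|_1=1\}$, you use the cube $[-1,1]^M$ and then verify $\sum_i|\alpha_i|=1$ separately), compute $\|\nu_k-\mu_{n_k}\|\to0$, and finish via Lemmas \ref{lemma:fsjn_f_bounded} and \ref{lemma:norm_conv_fsjn_seqs}.
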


\begin{proof}
Since $C = \{x\in \mathbb{R}^M: \|x\|_1 = 1\}$ is
a compact subspace of $\mathbb{R}^M$, we can find an increasing sequence $\seqk{n_k}$ and a point $\big(\alpha_1,\ldots,\alpha_M\big) \in C$ such that
\[\big(\alpha^{n_k}_1,\dots,\alpha^{n_k}_M\big)\xrightarrow{k\rightarrow\infty} \big(\alpha_1,\ldots,\alpha_M\big)\]
in the norm $\|\cdot\|_1$ of $\R^M$.

Then, for every $k\io$, the measure
\[\nu_k=\sum_{i=1}^M\alpha_i\delta_{x_i^{n_k}}\]
has norm $1$. Notice that $S(\seqk{\nu_k})\sub S(\seqk{\mu_{n_k}})$ and 
\[\big\|\nu_k-\mu_{n_k}\big\|=\sum_{i=1}^M\big|\alpha_i-\alpha_i^{n_k}\big|\xrightarrow{k\rightarrow\infty}0,\]
and appeal to Lemmas \ref{lemma:fsjn_f_bounded} and \ref{lemma:norm_conv_fsjn_seqs} to conclude that $\seqk{\nu_k}$ is a JN-sequence on $X$.
\end{proof}

Note that Proposition \ref{prop:constant_coeffs} does not say that $\alpha_i\neq 0$ for all $i=1,\ldots,M$, but of course we may remove from the definition of each $\nu_k$ all such points $x_i^{n_k}$ for which we have $\alpha_i=0$---by the definition of the support of a measure those point would not belong anyway to $\supp\big(\nu_k\big)$ and so we would have a sequence $\seqn{\nu_n}$ such that $\big|\supp\big(\nu_n\big)\big|<M$ for every $n\io$. Hence, from Proposition \ref{prop:constant_coeffs} we can easily derive the following result.

\begin{corollary}\label{lemma:supports_decrease_sizes}
Let a space $X$ admit a JN-sequence $\seqn{\mu_n}$  such
that there exists $M>2$, $M\io$, for which we have
$\big|\supp\big(\mu_n\big)\big|=M$ for every $n\io$. If there exists
a sequence $\seqn{x_n}$ such that $x_n\in\supp\big(\mu_n\big)$ for every $n\io$ and
$\lim_{n\to\infty}\mu_n\big(\big\{x_n\big\}\big)=0$, then $X$ admits
a JN-sequence $\seqn{\nu_n}$ such that
$\big|\supp\big(\nu_n\big)\big|=M-1$ for every $n\io$.\noproof
\end{corollary}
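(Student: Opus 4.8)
The plan is to obtain $\seqn{\nu_n}$ by simply deleting the vanishing point $x_n$ from each $\mu_n$ and renormalising, and then recognising the result as a JN-sequence through the norm-perturbation criterion of Lemma \ref{lemma:norm_conv_fsjn_seqs}. Concretely, for every $n\io$ I would set
$\tilde\mu_n=\mu_n\rstr\big(\supp(\mu_n)\sm\{x_n\}\big)=\mu_n-\mu_n(\{x_n\})\delta_{x_n}$
and then $\nu_n=\tilde\mu_n\big/\big\|\tilde\mu_n\big\|$. Since $x_n\in\supp(\mu_n)$ its coefficient is nonzero, while the remaining $M-1$ points keep their nonzero coefficients; deletion removes exactly one point and renormalisation does not change the support, so $\big|\supp(\nu_n)\big|=M-1$ for every $n\io$, as required.

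The analytic heart is a pair of elementary norm estimates, carried out on the full sequence with no passage to a subsequence. First, $\big\|\tilde\mu_n\big\|=\big\|\mu_n\big\|-\big|\mu_n(\{x_n\})\big|=1-\big|\mu_n(\{x_n\})\big|$, which is strictly positive for every $n$ (because $\supp(\mu_n)$ contains at least one point besides $x_n$ with nonzero mass, so $\big|\mu_n(\{x_n\})\big|<1$) and tends to $1$ by hypothesis; hence the normalisation is always legitimate. Combining $\big\|\mu_n-\tilde\mu_n\big\|=\big|\mu_n(\{x_n\})\big|\to0$ with $\big\|\tilde\mu_n-\nu_n\big\|=\big|1-\big\|\tilde\mu_n\big\|\big|\to0$ yields $\big\|\mu_n-\nu_n\big\|\to0$. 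It then remains only to verify the boundedness hypothesis of Lemma \ref{lemma:norm_conv_fsjn_seqs}: since $S\big(\nu_n\big)\sub S\big(\mu_n\big)$, every $f\in C(X)$ is bounded on $S\big(\nu_n\big)$ by Lemma \ref{lemma:fsjn_f_bounded}. With $\big\|\nu_n\big\|=1$ and $\seqn{\mu_n}$ a JN-sequence, Lemma \ref{lemma:norm_conv_fsjn_seqs} then gives that $\seqn{\nu_n}$ is a JN-sequence.

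I expect the only genuine subtlety to be the bookkeeping that pins down the \emph{exact} cardinality $M-1$. Passing instead through Proposition \ref{prop:constant_coeffs} (stabilise the coefficient vectors along a subsequence, observe that the coordinate sitting at $x_n$ must converge to $0$, and discard it) would only guarantee $\big|\supp(\nu_k)\big|\le M-1$, because several limiting coefficients could vanish simultaneously; this is precisely why I prefer the direct deletion-and-renormalisation, which removes one point and no more. The one detail to keep an eye on is that the deleted mass $\mu_n(\{x_n\})$ is genuinely nonzero yet strictly below $1$, so that both the support drop by exactly one and the renormalisation are valid for every single $n$, not merely for large $n$.
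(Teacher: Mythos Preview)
Your argument is correct. The paper does not spell out a proof but indicates that the corollary is ``easily derived'' from Proposition~\ref{prop:constant_coeffs}: pass to a subsequence on which the coefficient vector stabilises, observe that the coordinate attached to the points $x_{n_k}$ (after a further pigeonhole on the position index) must vanish in the limit, and drop the null coordinates. Your route is genuinely different and in one respect sharper: you delete exactly the single point $x_n$ and renormalise, applying Lemma~\ref{lemma:norm_conv_fsjn_seqs} directly to the full sequence. This avoids any subsequence extraction and yields $\big|\supp(\nu_n)\big|=M-1$ on the nose, whereas the Proposition~\ref{prop:constant_coeffs} route---as you rightly note---produces only $\big|\supp(\nu_k)\big|\le M-1$ along a subsequence (indeed, the paragraph preceding the corollary in the paper only claims ``$<M$''). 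For the downstream application in Lemma~\ref{lemma:supports_minimal_sizes_limit_points} the weaker conclusion suffices, since one immediately invokes the minimality of $M$; but your approach matches the stated corollary more faithfully and is arguably cleaner.
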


\begin{remark}\label{remark:2supp_functions}
Note that, by Lemma \ref{lemma:disjoint_supps_size_2}, 
a  space $X$ admits a JN-sequence $\seqn{\mu_n}$ of the form $\mu_n=\frac{1}{2}\big(\delta_{u_n}-\delta_{v_n}\big)$, where $u_n,v_n\in X$ for each $n\io$, if and only if there exist two disjoint sequences $\seqn{x_n}$ and $\seqn{y_n}$ of distinct points in $X$ such that for every $f\in C(X)$ and $\eps>0$ there exists $N\io$ such that for every $n>N$ we have $\big|f\big(x_n\big)-f\big(y_n\big)\big|<\eps$. This observation is
crucial for proving Theorem \ref{theorem:sizes_of_supps}.

If a compact space $K$ is totally disconnected, this fact boils down to the following one: $K$ admits a JN-sequence $\seqn{\mu_n}$ of the form $\mu_n=\frac{1}{2}\big(\delta_{x_n}-\delta_{y_n}\big)$, where $x_n,y_n\in K$, if and only if there exist two disjoint sequences $\seqn{x_n}$ and $\seqn{y_n}$ of distinct points in $K$ such that for every clopen set $U$ there is $N\io$ such that for every $n>N$ either $x_n,y_n\in U$ or $x_n,y_n\in U^c$. 
\end{remark}

In the next lemmas $\kK(\beta X)$ denotes the hyperspace of all non-empty
closed subsets of the \v{C}ech--Stone compactification $\beta X$ of a space $X$, endowed with the Vietoris
topology.

\begin{lemma}\label{lemma:supports_minimal_sizes_limit_points}
Let $X$ be a space for which there exists a JN-sequence $\seqn{\nu_n}$ such that $\sup_{n\io}\big|\supp\big(\nu_n\big)\big|<\infty$. Assume that $M\io$ is the
minimal natural number for which there exists a JN-sequence
$\seqn{\mu_n}$ on $X$ such that $\big|\supp\big(\mu_n\big)\big|=M$ for
every $n\io$. For such a sequence and every $n\io$, put $F_n=\supp\big(\mu_n\big)$. Then,
the set $\fF=\big\{F_n\colon\ n\io\big\}$, treated as a subset of the space $\kK(\beta X)$, has the following two
properties:
\begin{enumerate}
    \item every accumulation point of $\fF$ is a singleton;
    \item $\fF$ is not closed.
\end{enumerate}
\end{lemma}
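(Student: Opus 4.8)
The plan is to prove part (1) directly and then obtain part (2) as a short consequence. First I record two preliminary facts. By Lemma~\ref{lemma:supports_never_singletons} the supports $\supp\big(\mu_n\big)$ are eventually not singletons, so the minimal constant support size satisfies $M\ge 2$. Next, $\fF$ is infinite: if only finitely many sets occurred among the $F_n$, then some $M$-point set $F=\{p_1,\dots,p_M\}$ would equal $\supp\big(\mu_n\big)$ for infinitely many $n$; choosing for each $i$ a function $f_i\in C(X)$ with $f_i\big(p_i\big)=1$ and $f_i\big(p_{i'}\big)=0$ for $i'\ne i$ (possible since $X$ is Tychonoff) and applying the JN-property would force every coefficient, and hence $\big\|\mu_n\big\|$, to tend to $0$, contradicting $\big\|\mu_n\big\|=1$.

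For part (1), let $F$ be an accumulation point of $\fF$ in $\kK(\beta X)$; since $\kK(\beta X)$ is Hausdorff, every Vietoris neighbourhood of $F$ contains $F_n$ for infinitely many $n$. Testing $F$ against lower-Vietoris neighbourhoods that meet pairwise disjoint open sets shows first that $F$ is finite with $|F|\le M$ (otherwise infinitely many $F_n$ would have to meet $M+1$ disjoint open sets, forcing $\big|F_n\big|\ge M+1$). Suppose towards a contradiction that $F=\{w_1,\dots,w_r\}$ with $r\ge 2$. Using regularity of $\beta X$, fix open sets $O_1,\dots,O_r$ with pairwise disjoint closures and smaller open sets $O_j'$ with $w_j\in O_j'$ and $\ol{O_j'}\sub O_j$. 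The basic Vietoris neighbourhood consisting of those closed sets contained in $\bigcup_j O_j'$ and meeting every $O_j'$ contains $F$, hence contains $F_n$ for infinitely many $n$; fix this infinite index set. For such $n$ the groups $G_j^n=\supp\big(\mu_n\big)\cap O_j'$ partition $\supp\big(\mu_n\big)$ into $r\ge 2$ nonempty pieces, so $\big|G_j^n\big|\le M-1$ for each $j$.

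The heart of the argument is that each restricted sequence is weak* null. By Urysohn's lemma pick $\phi_j\in C(\beta X,[0,1])$ with $\phi_j\equiv 1$ on $\ol{O_j'}$ and $\phi_j\equiv 0$ off $O_j$; then $\phi_j\rstr X\in C(X)$. For every $f\in C(X)$ the product $\big(\phi_j\rstr X\big)\cdot f$ lies in $C(X)$, and for each $n$ in our index set $\supp\big(\mu_n\big)$ is covered by the $O_l'$, on which $\phi_j$ equals $1$ when $l=j$ and $0$ otherwise; hence $\mu_n\big(\big(\phi_j\rstr X\big)\cdot f\big)=\big(\mu_n\rstr O_j'\big)(f)$. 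Since $\seqn{\mu_n}$ is a JN-sequence the left-hand side tends to $0$, so $\big(\mu_n\rstr O_j'\big)(f)\to 0$ for every $f$. Because $\sum_{j=1}^r\big\|\mu_n\rstr O_j'\big\|=\big\|\mu_n\big\|=1$, some fixed index $j^*$ satisfies $\big\|\mu_n\rstr O_{j^*}'\big\|\ge 1/r$ for infinitely many $n$; along that subsequence the normalised measures $\mu_n\rstr O_{j^*}'\big/\big\|\mu_n\rstr O_{j^*}'\big\|$ have norm $1$, are weak* null, and have supports of size at most $M-1$. Passing to a further subsequence of constant support size yields a JN-sequence of constant support size $\le M-1$, contradicting the minimality of $M$; thus $|F|=1$. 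The main obstacle is precisely this step: since the limit points $w_j$ may lie in $\beta X\sm X$, one cannot evaluate $f$ at them, and the bump-function identity $\mu_n\big(\big(\phi_j\rstr X\big)f\big)=\big(\mu_n\rstr O_j'\big)(f)$ is what transfers weak* nullity to the pieces using only continuity on $X$.

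Finally, part (2) follows formally. The hyperspace $\kK(\beta X)$ is compact because $\beta X$ is compact, so the infinite set $\fF$ has an accumulation point $G$ in $\kK(\beta X)$. By part (1) the set $G$ is a singleton, whereas every element $F_n$ of $\fF$ has exactly $M\ge 2$ points; hence $G\notin\fF$. Therefore $\fF$ does not contain all of its accumulation points and is not closed.
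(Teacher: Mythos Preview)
Your proof is correct and follows the same overall strategy as the paper: assume an accumulation point $F$ has at least two points, use bump functions in $C(\beta X)$ to isolate a piece of each $\mu_n$, and obtain a JN-sequence with strictly smaller constant support size. The execution differs in two notable ways. First, to bound the norm of the piece away from zero, the paper invokes Corollary~\ref{lemma:supports_decrease_sizes} to obtain a uniform $\eps>0$ with $\big|\mu_n(\{x\})\big|>\eps$ for every $x\in F_n$, so that any single point in $U_0$ already gives $\big\|(g\rstr X)\der\mu_n\big\|>\eps$; you instead use the full Vietoris neighbourhood $\langle O_1',\dots,O_r'\rangle$ so that the $O_j'$ partition $F_n$, and then pigeonhole forces $\big\|\mu_n\rstr O_{j^*}'\big\|\ge 1/r$ along a subsequence. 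Your route is thus slightly more self-contained (it does not rely on Corollary~\ref{lemma:supports_decrease_sizes}), while the paper's needs only two points of $F$ and a single bump function. Second, for part~(2) you explicitly verify that $\fF$ is infinite and invoke compactness of $\kK(\beta X)$ to produce an accumulation point outside $\fF$; the paper's argument leaves this step implicit. Both approaches are valid and yield the same conclusion.
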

\begin{proof}
(1) By Corollary \ref{lemma:supports_decrease_sizes} and the minimality of $M$, there exists $\eps>0$ such that for every $n\io$ and $x\in F_n$ we have $\big|\mu_n(\{x\})\big|>\eps$. By Lemma \ref{lemma:supports_never_singletons}, $M>1$. Let $F\in\kK(\beta X)$ be an accumulation point of $\fF$. We claim that $|F|=1$. To see this, let us suppose that $|F|>1$, so there exist distinct $x_0,x_1\in F$. 
Let $U_0$, and $U_1$ be two open subsets of $\beta X$ such that $x_0\in U_0$, $x_1\in U_1$ and $\ol{U_0}\cap\ol{U_1}=\emptyset$. Put:
\[I=\big\{n\io\colon\ F_n\cap U_0\neq\emptyset,\ F_n\cap U_1\neq\emptyset\big\}.\]
Since $F$ is an accumulation point of $\big\{F_n\colon\ n\io\big\}$, $I$ is infinite. Let
$g\in C(\beta X,[0,1])$ be a function such that 
$g\rstr\ol{U_0}\equiv 1$ and $g\rstr\ol{U_1}\equiv 0$. For every
$n\in I$ define the measure $\theta_n$ as follows:
\[\theta_n=(g\rstr X)\der\mu_n\Big/\big\|(g\rstr X)\der\mu_n\big\|.\]
Then, $\seq{\theta_n}{n\in I}$ is a JN-sequence. Indeed, for each $n\in I$ we have $\big\|\theta_n\big\|=1$, and since $F_n\cap U_0=\supp\big(\mu_n\big)\cap (U_0\cap X)\neq\emptyset$, it follows that
\[\big\|(g\rstr X)\der\mu_n\big\|\ge\big\|\big((g\rstr X)\der\mu_n\big)\rstr (U_0\cap X)\big\|=\big\|\mu_n\rstr(U_0\cap X)\big\|>\eps,\]
so if $f\in C(X)$, then for every $n\in I$ we have
$\theta_n(f)=\mu_n\big(f\cdot (g\rstr X)\big)/\big\|(g\rstr X)\der\mu_n\big\|$ and
\[\big|\theta_n(f)\big| = \big|\mu_n\big(f\cdot (g\rstr X)\big)\big|\Big/\big\|(g\rstr X)\der\mu_n\big\| < \big|\mu_n\big(f\cdot (g\rstr X)\big)\big|/\eps.\]
Since $g\rstr X\in C(X)$ and hence
\[\lim_{\substack{n\to\infty\\n\in I}}\mu_n\big(f\cdot(g\rstr X)\big)=0,\]
it follows that
\[\lim_{\substack{n\to\infty\\n\in I}}\theta_n(f)=0.\]
This proves that $\seq{\theta_n}{n\in I}$ is weak* null and hence a JN-sequence.
Since $g\rstr\ol{U_1}\equiv 0$ and for each $n\in I$ it holds that $\supp\big(\theta_n\big)\sub\supp\big(\mu_n\big)$, it follows that $\supp\big(\theta_n\big)\subsetneq\supp\big(\mu_n\big)$, so $\big|\supp\big(\theta_n\big)\big|<M$, which is a contradiction with the assumption that $M$ is minimal. This proves that $F$ is a singleton.

\medskip

(2) By (1), each accumulation point of $\fF$ is a singleton, so since, by Lemma \ref{lemma:supports_never_singletons}, none of the elements of $\fF$ is a singleton, $\fF$ cannot be closed in $\kK(\beta X)$.
\end{proof}

\begin{theorem}\label{theorem:sizes_of_supps}
Let $X$ be a space for which there exists a JN-sequence $\seqn{\mu_n}$ such that 
$\sup_{n\io}\big|\supp\big(\mu_n\big)\big|<\infty$. Then, there exists a JN-sequence $\seqn{\nu_n}$ such that $\nu_n=\frac{1}{2}\big(\delta_{x_n}-\delta_{y_n}\big)$ for every $n\io$, where $x_n,y_n\in X$.
\end{theorem}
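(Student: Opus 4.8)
The plan is to realise the desired $2$-point JN-sequence directly among the supports of a \emph{minimal} JN-sequence, rather than by extracting a convergent subsequence (which, as the example $K_B$ shows, need not exist). First I would pass to the smallest support size: since $\sup_n\big|\supp\big(\mu_n\big)\big|<\infty$, a pigeonhole argument yields a subsequence, hence a JN-sequence, all of whose supports have a fixed size, so let $M$ be the least integer for which a JN-sequence with $\big|\supp\big(\mu_n\big)\big|=M$ for every $n\io$ exists. Testing against $1_X$ (equivalently Lemma \ref{lemma:supports_never_singletons}) rules out $M=1$, so $M\ge 2$. Fix such a minimal JN-sequence $\seqn{\mu_n}$, put $F_n=\supp\big(\mu_n\big)$, and note that by Lemma \ref{lemma:s_infinite} we may assume the $F_n$ are pairwise distinct: otherwise some finite support repeats infinitely often, giving a subsequence whose $S$ is finite. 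For each $n$ choose two distinct points $x_n,y_n\in F_n$ and set $\nu_n=\frac12\big(\delta_{x_n}-\delta_{y_n}\big)$; then $\big\|\nu_n\big\|=1$ and $\big|\supp\big(\nu_n\big)\big|=2$, so it remains only to prove that $\seqn{\nu_n}$ is weak* null.

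The key step is to convert the conclusion of Lemma \ref{lemma:supports_minimal_sizes_limit_points} into a statement about ultrafilter limits of the two coordinate sequences. Let $\uU$ be an arbitrary free ultrafilter on $\omega$. By compactness of $\kK(\beta X)$ the Vietoris limit $L=\uU\text{-}\lim_n F_n$ exists, and since the $F_n$ are distinct it is a cluster point of the sequence, hence an accumulation point of $\fF=\big\{F_n\colon n\io\big\}$; thus by Lemma \ref{lemma:supports_minimal_sizes_limit_points} it is a singleton $\big\{z_\uU\big\}$ with $z_\uU\in\beta X$. The coordinate limits $u=\uU\text{-}\lim_n x_n$ and $v=\uU\text{-}\lim_n y_n$ exist in $\beta X$ and both lie in $L$: for any open $V\ni u$ one has $\big\{n\colon x_n\in V\big\}\in\uU$, whence $\big\{n\colon F_n\cap V\neq\emptyset\big\}\in\uU$, which by the description of the Vietoris limit forces $u\in L$, and likewise $v\in L$. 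Since $|L|=1$, we conclude $u=v=z_\uU$ for \emph{every} free ultrafilter $\uU$.

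Finally I would invoke the standard fact that a bounded real sequence converges to $0$ if and only if all of its free ultrafilter limits are $0$. Fix $f\in C(X)$ and set $g=\arctan\circ f$, a bounded continuous function that extends to $\beta g\in C(\beta X)$. For every free $\uU$, because continuous maps commute with ultralimits, $\uU\text{-}\lim_n g(x_n)=\beta g(u)=\beta g(v)=\uU\text{-}\lim_n g(y_n)$, so $\uU\text{-}\lim_n\big(g(x_n)-g(y_n)\big)=0$; as this holds for all $\uU$, we obtain $g(x_n)-g(y_n)\to 0$. By Lemma \ref{lemma:fsjn_f_bounded} the function $f$ is bounded, say by $R$, on $S\supseteq\big\{x_n,y_n\colon n\io\big\}$, and $\arctan$ is bi-Lipschitz on $[-R,R]$, so $f(x_n)-f(y_n)\to 0$ as well. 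Hence $\nu_n(f)=\frac12\big(f(x_n)-f(y_n)\big)\to 0$ for every $f\in C(X)$, and $\seqn{\nu_n}$ is the required JN-sequence; the final statement of the theorem then follows via Remark \ref{remark:2supp_functions} (and Lemma \ref{lemma:disjoint_supps_size_2} if disjoint supports are wanted).

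The main obstacle is precisely the temptation to extract a subsequence $\seqk{F_{n_k}}$ Vietoris-converging to a singleton: the examples $\bo$ and $K_B$ show that no such subsequence need exist, even though every accumulation point of $\fF$ is a singleton. The resolution, and the crux of the argument, is to use \emph{all} free ultrafilter limits at once, exploiting that ordinary convergence of a bounded scalar sequence is equivalent to the coincidence of its ultralimits; this sidesteps any appeal to first countability or sequential compactness of $\kK(\beta X)$, both of which genuinely fail here.
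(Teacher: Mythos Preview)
Your argument is correct and follows the paper's overall strategy: pass to the minimal support size $M\ge 2$, invoke Lemma~\ref{lemma:supports_minimal_sizes_limit_points} to see that every accumulation point of $\fF=\{F_n:n\io\}$ in $\kK(\beta X)$ is a singleton, pick $x_n\neq y_n\in F_n$, and deduce $f(x_n)-f(y_n)\to 0$ for all $f\in C(X)$. The only difference is in how the last step is executed. The paper argues by a direct contradiction: if $J=\{n:\frac12|f(x_n)-f(y_n)|>\eta\}$ were infinite, replace $f$ by a bounded function agreeing with $f$ on $S$ (a simple truncation, rather than your $\arctan$), extend it to $\beta f\in C(\beta X)$, take \emph{any} accumulation point $\{z\}$ of $\{F_n:n\in J\}$, choose a neighborhood $U$ of $z$ on which $\beta f$ oscillates by less than $2\eta$, and observe that some $F_n\sub U$ with $n\in J$---a contradiction. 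This avoids your preliminary reduction to pairwise distinct $F_n$'s and the detour through $\arctan$ and bi-Lipschitzness; your ultrafilter formulation is a clean equivalent, trading the single-accumulation-point contradiction for the blanket statement that $\uU\text{-}\lim x_n=\uU\text{-}\lim y_n$ for every free $\uU$. Both approaches sidestep the (genuinely false) sequential compactness of $\kK(\beta X)$ in the same way, namely by using accumulation points rather than convergent subsequences.
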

\begin{proof}
Let $M\io$ be the minimal natural number for which there exists a JN-sequence
$\seqn{\mu_n}$ on $X$ such that $\big|\supp\big(\mu_n\big)\big|=M$ for
every $n\io$. By Lemma \ref{lemma:supports_never_singletons}, $M>1$. We shall show that $M=2$.

By Corollary \ref{lemma:supports_decrease_sizes} and the minimality of $M$,  there is $\eps>0$ such that, for every $n\io$ and $x\in\supp\big(\mu_n\big)$, it holds $\big|\mu_n(\{x\})\big|>\eps$. For every $n\io$, put $F_n=\supp\big(\mu_n\big)$; then, $|F_n|=M$. Let $\fF=\big\{F_n\colon\ n\io\big\}$; by Lemma \ref{lemma:supports_minimal_sizes_limit_points} every accumulation point of $\fF$ in the Vietoris topology of $\kK(\beta X)$ is a singleton.

For every $n\io$ choose $x_n\neq y_n\in F_n$ and define the measure $\nu_n$ as $\nu_n=\frac{1}{2}\big(\delta_{x_n}-\delta_{y_n}\big)$. We claim that the sequence $\seqn{\nu_n}$ is weak* null and hence a JN-sequence. To see this, assume that there exists $f\in C(X)$ and $\eta>0$ such that the set
\[J=\big\{n\io\colon\ {\textstyle\frac{1}{2}}\big|f\big(x_n\big)-f\big(y_n\big)\big|>\eta\big\}\]
is infinite. Observe that, without loss of generality, we can assume that the function $f$ is bounded. Indeed, by Lemma \ref{lemma:fsjn_f_bounded} $f$ is bounded on $S = S\big(\seqn{\mu_n}\big)$, hence we can replace $f$ by a  bounded function $g\in C(X)$ defined by
\[g = \max\Big(\min\big(f,\sup_{x\in S}f(x)\big), \inf_{y\in S}f(y)\Big),\]
which agrees with $f$ on $S$, in particular $g\big(x_n\big) = f\big(x_n\big)$ and $g\big(y_n\big) = f\big(y_n\big)$ for all $n\io$. 

Let $\beta f\colon \beta X \to \mathbb{R}$ be a continuous extension of $f$, and
let $z\in \beta X$ be such that $\{z\}$ is an accumulation point of the set $\big\{F_n\colon\ n\in J\big\}$ in $\kK(\beta X)$. Let $U$ be a neighborhood of $z$ in $\beta X$ such that for every $x,y\in U$ we have $\big|\beta f(x) - \beta f(y)\big|<2\eta$. Since $\{z\}$ is an accumulation point of $\big\{F_n\colon n\in J\big\}$, there is $n\in J$ such that $F_n\sub U$, and hence $x_n,y_n\in U$, which is a contradiction, as $\big|\beta f\big(x_n\big) - \beta f\big(y_n\big)\big| = \big|f\big(x_n\big) - f\big(y_n\big)\big| > 2\eta$.
\end{proof}

\begin{remark}\label{remark:tot_disc_easier_prof_sizes_2}
Let us note that if $K$ is compact and totally disconnected, then we can prove
Theorem \ref{theorem:sizes_of_supps} without appealing to
Lemma \ref{lemma:supports_minimal_sizes_limit_points}. Indeed, let
$\seqn{\mu_n}$ and $M$ be as in Theorem
\ref{theorem:sizes_of_supps}. By Lemma
\ref{prop:constant_coeffs}, we may assume that there exist non-zero
$\alpha_1,\ldots,\alpha_M\in[-1,1]$ such that for every $n\io$ the
measure $\mu_n$ is of the form
$\mu_n=\sum_{i=1}^M\alpha_i\delta_{x_i^n}$ for some
$x_1^n,\ldots,x_M^n\in K$. Note that for every clopen set $U\sub K$
the sequences $\seqn{\mu_n\rstr U}$ and $\seqn{\mu_n\rstr U^c}$ are
weak* null, so it follows that for sufficiently large $n\io$
either $x_1^n,\ldots,x_M^n\in U$, or $x_1^n,\ldots,x_M^n\in
U^c$---otherwise, we would get a contradiction with the minimality
of $M$. Now, the formula
$\nu_n=\frac{1}{2}\big(\delta_{x_1^n}-\delta_{x_2^n}\big)$ defines a
JN-sequence on $K$,  with the property that
$\big|\supp\big(\nu_n\big)\big|=2$ for every $n\io$. Since $M$ is
minimal, it follows that $M=2$.
\end{remark}


\medskip

\begin{proof}[Proof of Theorem {\ref{theorem:main2}}]
The first part of the thesis follows immediately from Theorem \ref{theorem:sizes_of_supps}. The second part is a consequence of Theorem \ref{theorem:sizes_of_supps} and Remark \ref{remark:2supp_functions}.
\end{proof}

\medskip

Corollary \ref{cor:main2} is an immediate consequence of Theorem \ref{theorem:sizes_of_supps} (or Theorem \ref{theorem:main2}), too.

\end{document}